%% file: main.tex
\documentclass[12pt]{scrartcl}

\input{header}

\title{The solvability of the inverse volcano problem over non-prime finite fields}

\author{Alexandru Ghitza and Dhruv Gupta and Maximilian Kortge}

\date{}

\begin{document}

\maketitle

\section{Introduction}


Let $\ell$ be a prime number.
An $\ell$-volcano is a simple connected graph that resembles a volcano (cf.\ \cref{volcano-def}); it consists of some vertices arranged in a crater formation, together with isomorphic trees (lava flows) descending from each crater vertex and stopping at a finite depth $d \geq 0$.
This type of graph occurs naturally in arithmetic geometry (and computational and cryptographic applications):
Kohel \cite{kohel} found that the connected components of the graph $\gpk$ of $\ell$-isogenies between ordinary elliptic curves over a finite field $\fpk$ are $\ell$-volcanoes.
We say that such a volcano \define{appears in} $\gpk$.

\begin{figure}[h!]
  \begin{minipage}{0.48\textwidth}
    \centering
    \begin{tikzpicture}[scale=1]
      \def\rad{1.0}       
      \def\levelsep{1.0}  
      \def\branchsep{0.8} 

      \node[vertex] (t1) at (-0.5,3) {};
      \node[vertex] (t2) at (0.5,3) {};
      \draw[edge] (t1) to[bend right=50] (t2);
      \draw[edge] (t1) to[bend left=50] (t2);

      \node[vertex] (t3) at (-1, 1.5) {};

      \node[vertex] (t4) at (1, 1.5) {};

      \draw[edge] (t1) to[bend left=10] (t3);
      \draw[edge] (t2) to[bend right=10] (t4);
    \end{tikzpicture}
  \end{minipage}
  \hfill
  \begin{minipage}{0.48\textwidth}
    \centering
    \begin{tikzpicture}[scale=1]
      \def\rad{1.0}       
      \def\levelsep{1.0}  
      \def\branchsep{0.8} 

      \node[vertex] (t1) at (-0.5,3) {};
      \node[vertex] (t2) at (0.5,3) {};
      \draw[edge] (t1) to[bend right=50] (t2);
      \draw[edge] (t1) to[bend left=50] (t2);

      \node[vertex] (t3) at (-2, 1.5) {};
      \node[vertex] (t4) at (-1, 1.5) {};

      \node[vertex] (t5) at (1, 1.5) {};
      \node[vertex] (t6) at (2, 1.5) {};

      \node[vertex] (t7) at (-3, 0) {};
      \node[vertex] (t8) at (-2.5, 0) {};
      \node[vertex] (t9) at (-2, 0) {};

      \node[vertex] (t10) at (-1.5, 0) {};
      \node[vertex] (t11) at (-1, 0) {};
      \node[vertex] (t12) at (-0.5, 0) {};

      \node[vertex] (t13) at (0.5, 0) {};
      \node[vertex] (t14) at (1, 0) {};
      \node[vertex] (t15) at (1.5, 0) {};

      \node[vertex] (t16) at (2, 0) {};
      \node[vertex] (t17) at (2.5, 0) {};
      \node[vertex] (t18) at (3, 0) {};

      \node (t19) at (0, -0.25) {};
      \node (t20) at (0, -1) {};

      \draw[edge] (t1) to[bend left=20] (t3);
      \draw[edge] (t1) to[bend left=10] (t4);
      \draw[edge] (t2) to[bend right=10] (t5);
      \draw[edge] (t2)to[bend right=20](t6);

      \draw[edge] (t3) to[bend right=20] (t7);
      \draw[edge] (t3) to[bend right=20] (t8);
      \draw[edge] (t3) to[bend right=10] (t9);
      \draw[edge] (t4) to[bend right=20] (t10);
      \draw[edge] (t4) to[bend right=15] (t11);
      \draw[edge] (t4) to[bend right=10] (t12);

      \draw[edge] (t5) to[bend left=10] (t13);
      \draw[edge] (t5) to[bend left=15] (t14);
      \draw[edge] (t5) to[bend left=20] (t15);
      \draw[edge] (t6) to[bend left=10] (t16);
      \draw[edge] (t6) to[bend left=20] (t17);
      \draw[edge] (t6) to[bend left=20] (t18);
    \end{tikzpicture}
  \end{minipage}
  \vspace{-25pt}
  \caption{A $2$-volcano of depth $1$ (left) and a $3$-volcano of depth $2$ (right).}
  \label{fig:volcano-examples}
\end{figure}
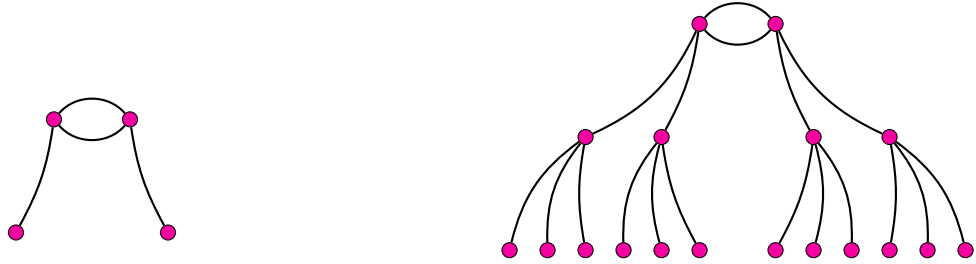

\begin{definition}\label{explosive}
  Consider an $\ell$-volcano $V$ and $k\geq 1$. We say that a prime number $p \ne \ell$ is a \define{\explosive} prime if $V$ appears in $\gpk$.
\end{definition}

Given a fixed $\ell$-volcano $V$ of depth $d \geq 0$ and $k\geq 1$, the inverse volcano problem investigates the existence of such \explosive\ primes.

This problem was first considered by Bambury--Campagna--Pazuki \cite[Question 1.1]{BCP} in the case of prime finite fields $\fp$, where they give a complete positive answer \cite[Theorem 1.3]{BCP}:
any volcano $V$ has infinitely many $1$-explosive primes.

The situation is more delicate if we are looking for a particular volcano in $\gpk$ for $k>1$.
A specific counterexample is given in \cite[Section 5]{BCP}:

\begin{prop}\label{prop:counterexample}
  Let $V$ denote the $2$-volcano illustrated on the left in \cref{fig:volcano-examples}.
  Then $V$ does not appear in $\mathcal{G}_2\left(\FF_{p^2}\right)$ for any prime $p\neq 2$.
  In other words, there are no $2$-explosive primes for $V$.
\end{prop}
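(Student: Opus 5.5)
The plan is to turn the shape of $V$ into arithmetic conditions on the Frobenius of a curve over $\FF_{p^2}$, using Kohel's description of the components of $\mathcal{G}_2(\FF_{p^2})$, and then show these conditions contradict each other modulo $8$.

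Suppose $V$ is the component of some ordinary $E/\FF_{p^2}$ with $p\ne 2$. Let $t$ be the trace of Frobenius $\pi$ of $E$. Ordinarity gives $p\nmid t$. Let $K=\QQ(\pi)$ have fundamental discriminant $d_K$, and write
\[
t^2-4p^2 = f^2 d_K ,
\]
where $f=[\mathcal O_K:\ZZ[\pi]]$. By Kohel, the depth of the $2$-volcano containing $E$ is $v_2(f)$. The crater lies at the level of the order that is maximal at $2$, and the number of horizontal edges out of a crater vertex is $1+\left(\frac{d_K}{2}\right)$.

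Next I read off the data of $V$. The two crater vertices are joined by a double edge, so each crater vertex has two horizontal $2$-isogenies. This forces $2$ to split in $K$, i.e.\ $d_K\equiv 1 \pmod 8$. If $2$ were ramified there would be only one horizontal edge, and if inert there would be none. The depth is $1$, so $v_2(f)=1$. This is consistent with the degree count at a crater vertex: $2$ horizontal edges plus $1$ descending edge gives $3=\ell+1$.

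Then I derive the contradiction. From $v_2(f)=1$ and $d_K$ odd we get $4\,\|\,t^2-4p^2$, with exact $2$-power $4$. In particular $t$ is even, say $t=2s$. Dividing by $4$ gives
\[
s^2-p^2 = f'^2 d_K , \qquad f'=f/2 \text{ odd}.
\]
The right-hand side is $\equiv 1 \pmod 8$, since $f'^2\equiv 1$ and $d_K\equiv 1$. On the left, $p^2\equiv 1 \pmod 8$ and $s^2\in\{0,1,4\} \pmod 8$, so $s^2-p^2\in\{7,0,3\} \pmod 8$. None of these is $1$, so no such $E$ exists and $V$ never appears in $\mathcal{G}_2(\FF_{p^2})$.

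The main obstacle is not the congruence but making the dictionary between volcano shape and $(d_K,f)$ airtight. Specifically, I must justify that a double edge between two crater vertices really means two distinct horizontal isogenies, which forces $2$ split, and rule out degenerate configurations. The point needing care is a crater of size $2$ arising when the class of a prime above $2$ has order $2$, in which case $\mathfrak l$ and $\bar{\mathfrak l}$ give the two parallel edges. I would also check that the $j=0,1728$ exceptions, where extra automorphisms distort the graph, cannot produce this picture. The depth formula $v_2(f)$ itself I would take directly from Kohel.
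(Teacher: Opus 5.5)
Your proof is correct, and it takes a genuinely different route from the paper.

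The paper obtains this proposition as the case $(d,n,k)=(1,2,2)$ of \ref{thm:split_two_converse_low_depth}. By the Chebotarev criterion \cref{thm:existence}, appearance over $\FF_{p^k}$ with crater order $\oh_0$ means the Frobenius at a prime above $p$ lies in $X\subseteq\Gal(H_2/K)$. \cref{lem:xequiv} reduces $X=\emptyset$ to $\ker\pi_1\cap 2^rS_2=\{0\}$. That in turn follows from the splitting of $0\to\kappa_2\to S_2\to S_0\to 0$ (genus theory, \ref{ses-splitting-split-inert}, \cref{lem:splitting}) together with $\kappa_2\cong\ZZ/2\ZZ$.

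You replace all of this (class field theory, genus theory, unit groups modulo $2^d$) with Kohel's depth formula for a single Frobenius and a congruence on its trace. You also never need to know which orders are compatible.

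Your argument visibly generalises. Once $s$ is forced to be even, $s^2-p^k\equiv 3$ while $f'^2d_K\equiv d_K\equiv 1$ modulo $4$. So you only use that $d_K$ is odd and $p^k\equiv 1\pmod 4$, which rules out $(S_n,2,1)$ and $(I_1,2,1)$ for every even $k$. A similar count handles $d=2$: there $s$ is odd, so $8\mid s^2-p^k$, whereas $(f/2)^2d_K$ has $2$-adic valuation exactly $2$.

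The paper's framework, in turn, gives one criterion that also yields the positive-density existence results and reaches larger depths whenever the splitting is available. Your method is a useful check on that proviso. At $d=3$, $k=4$ it gives no obstruction, and indeed $q=5^4$, $t=-2$ gives $t^2-4q=-2^6\cdot 39$, hence an $(S_4,2,3)$ component of $\mathcal{G}_2(\FF_{5^4})$. So the $d=3$ case of \ref{thm:split_two_converse_low_depth} fails as stated: its proof needs the splitting at index $d+1=4$, which \ref{ses-splitting-split-inert} does not supply (compare the $\QQ(\sqrt{-39})$ remark after \cref{ses-splitting}).

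Finally, the points you flag as the main obstacle are already settled by Kohel's theorem as quoted in the paper. The crater degree $1+\left(\frac{D_K}{\ell}\right)$ counts edges with multiplicity, so a $2$-regular crater forces $2$ to split. Removing $j=0,1728$, which occur only as crater vertices for $\QQ(\sqrt{-3})$ and $\QQ(i)$, leaves forests without multiple edges, and these cannot be $V$.
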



Our paper generalises the results of \cite{BCP} by providing a precise framework for investigating the inverse volcano problem for $\gpk$ with a fixed $k>1$.
This is based on a characterisation of the appearance of an $\ell$-volcano in terms of the existence of elements of certain orders in the class groups of appropriate imaginary quadratic fields.

For volcanoes of depth zero, we obtain in \cref{cor:existence_depth_0} a complete positive answer that generalises the corresponding result \cite[Theorem 1.2]{BCP}.
For depth $d\geq 1$, the exact formulation and extent of our results vary with the crater type $V_0$ of the $\ell$-volcano, which in turn depends on the ramification behaviour ($S$plit, $I$nert, or $R$amified) of $\ell$ in the appropriate imaginary quadratic fields (see \cref{craters_orders} for the relation between crater type and ramification).
Letting $r$ denote the $\ell$-adic valuation of $k$, we summarise our findings in the following table:

\begin{center}
  \begin{tabular}{lllllll}
    \toprule
    $V_0$      & $\ell$   & $d$     & $n$                     & Condition on $r$ & \explosive\ primes                 & Reference                                     \\
    \midrule
    $S_n, I_1$ & $\ge 3$  & $\ge 1$ & $\geq 1$                & $r < d$          & Infinitely many                    & \ref{thm:exist_primes-split-inert-general}    \\
               & $2$      & $\ge 2$ & $\geq 1$                & $r < d-1$        & Infinitely many                    & \ref{thm:exist_primes-split-inert-special}    \\
               & $2$      & $1$     & $\geq 1$                & $r = 0$          & Infinitely many                    & \ref{thm:exist_primes-split-inert-special}    \\
    $R_2$      & $\ge 2$  & $\ge 1$ & --                      & $r < d+1$        & Infinitely many                    & \ref{thm:exist_primes-ramified-non-principal} \\
    $R_1$      & $\neq 3$ & $\ge 1$ & --                      & $r < d+1$        & Infinitely many                    & \ref{thm:exist_primes-ramified-principal}     \\
               & $3$      & $\ge 1$ & --                      & $r < d$          & Infinitely many                    & \ref{thm:exist_primes-ramified-principal}     \\

    \midrule

    $S_n, I_1$ & $2$      & $1$     & $\geq 1$                & $r \ge 1$        & None                               & \ref{thm:split_two_converse_low_depth}        \\
               & $2$      & $2,3$   & $\geq 1$                & $r \ge d-1$      & None                               & \ref{thm:split_two_converse_low_depth}        \\
    $S_n$      & $2$      & $\ge 4$ & $\in\{1,2,3,5,6,7\}$    & $r \ge d-1$      & None                               & \ref{thm:split_two_converse_high_depth}       \\
    $R_1$      & $\neq 3$ & $\ge 1$ & --                      & $r \ge d+1$      & None                               & \cref{thm:converse_R1}                        \\
               & $3$      & $\ge 1$ & --                      & $r \ge d$        & None                               & \cref{thm:converse_R1}                        \\
    \midrule
    $I_1$      & $\ge 3$  & $\ge 1$ & --                      & $r \ge d$        & Infinitely many\textsuperscript{*} & \ref{heuristics-inert}                        \\
    $R_2$      & $\ge 5$  & $\ge 1$ & --                      & $r \ge d+1$      & Infinitely many\textsuperscript{*} & \ref{heuristics-ramified}                     \\
    \midrule
    $S_n$      & $\ge 3$  & $\ge 1$ & $\geq 1$                & $r \ge d$        & ??                                 & --                                            \\
    $S_n$      & $2$      & $\ge 4$ & $\notin\{1,2,3,5,6,7\}$ & $r \ge d-1$      & ??                                 & --                                            \\
    $I_1$      & $2$      & $\ge 4$ & --                      & $r \ge d-1$      & ??                                 & --                                            \\
    $R_2$      & $2,3$    & $\ge 1$ & --                      & $r \ge d+1$      & ??                                 & --                                            \\
    \bottomrule
    \multicolumn{7}{l}{\footnotesize *\,\,\, Result conditional on modified \cohlen\ heuristics.}                                                                     \\
    \multicolumn{7}{l}{\footnotesize ?? Our methods yield no information.}                                                                                            \\
  \end{tabular}
\end{center}

In some of the cases (denoted with a {*}) our method requires detailed knowledge of the behaviour of the $\ell$-torsion in class groups of imaginary quadratic fields.
This is a difficult problem that remains an active area of research, so we had to resort to information that is conditional on a modification of the \cohlen\ heuristics.
There are also some cases (denoted with ?? at the bottom of the table) where we do not obtain any results, conditional or not.

In the process of proving the appearance of a volcano, we need to find a suitable order $\oh_0$ in an imaginary quadratic field $K$, corresponding to the crater of the volcano.
In almost all cases, we find that $\oh_0$ can be chosen to be the maximal order $\oh_K$, cf. \cref{rem:maximal_orders_for_existence} and \cref{subsect:cohen-lenstra}.
(The exception is any $3$-volcano with crater $R_1$, where $\oh_0$ is the order of conductor $2$ in $\oh_K$.)

Here is an outline of the paper.
In \cref{sect:prelim} we gather some notation and basic facts about the fundamental objects we work with: orders in imaginary quadratic fields, elliptic curves and isogenies, volcano graphs.
In \cref{sect:orders} we focus on the class groups of imaginary quadratic orders, especially how they vary as we increase the power of $\ell$ dividing the conductor of the order.
\cref{sect:solvability} contains our general criterion for the existence/non-existence of \explosive\ primes for a fixed volcano.
(An interesting feature, stemming from the Chebotarev Density Theorem and noticeable in the above table, is that the set of \explosive\ primes either has positive density or is empty.)
In \cref{sect:appearance} we apply this criterion to solve the inverse problem, first for craters (based on results from \cite{BCP}) and then for deeper volcanoes with various crater types.
Our modification of the \cohlen\ heuristics, a description of the computational evidence supporting them, and the conditional results they give rise to are in \cref{sect:computations}.
The appendices gather some supporting material that does not appear in the literature in the form we need it.

\textbf{Acknowledgements:}
The point of origin of this research was Fabien Pazuki's Behrend Memorial Lecture at the University of Melbourne in April 2024.
We express our gratitude to Pazuki for introducing us to the inverse volcano problem, and to the Behrend Fund for making the event possible.
The work of the authors was funded in part by the Vacation Scholarship Program and a grant from the School of Mathematics and Statistics at the University of Melbourne, while the computational component was supported by the University's Research Computing Services and the Petascale Campus Initiative.

\section{Preliminaries}\label{sect:prelim}

Fix a prime $\ell$.

\subsection{Imaginary quadratic orders}\label{subsect:prelim_orders}

Let $K=\QQ\big(\sqrt{N}\big)$, $N<0$ squarefree integer, be an imaginary quadratic field with ring of integers $\oh_K$.
We denote by $D_K$ the discriminant of $\oh_K$:
\begin{equation*}
  D_K =
  \begin{cases}
    N  & \text{if }N\equiv 1\mod{4}, \\
    4N & \text{otherwise.}
  \end{cases}
\end{equation*}
We take the $\ZZ$-basis $\{1,\omega_K\}$ of $\oh_K$, where
\begin{equation*}
  \omega_K = \frac{D_K+\sqrt{D_K}}{2}.
\end{equation*}

Let $c\geq 1$ be coprime to $\ell$ and consider the order $\oh_0$ of conductor $c$: this is a subring of $\oh_K$ of index $c$ and
\begin{equation*}
  \oh_0 = \ZZ + c\oh_K = \ZZ + c\omega_K\ZZ.
\end{equation*}
For $d\geq 0$, let $\oh_d$ be the order of conductor $c\ell^d$.

The \define{ideal class group} $\Cl(\oh_d)$ (sometimes called the \define{Picard group} of $\oh_d$) is the group of proper fractional $\oh_d$-ideals modulo principal $\oh_d$-ideals (cf. \cite[\S 7]{cox}).
It is a finite abelian group.

The \define{ring class field} of $\oh_d$ is the unique finite abelian extension $H_d$ of $K$ such that
\begin{itemize}
  \item all the prime ideals of $K$ that ramify in $H_d$ divide $c\ell^d\oh_K$;
  \item the ideal class group $\Cl(\oh_d)$ is canonically isomorphic to the Galois group $\Gal(H_d/K)$ under the Artin reciprocity map.
\end{itemize}
For $d'\geq d$ we have $\oh_{d'}\subseteq\oh_d$ and $H_d\subseteq H_{d'}$.
(See \cite[Exercise 9.19]{cox}.)

For more details about ring class fields, see \cite[Chapter 9]{cox}.

\subsection{Some discriminant upper bounds}
We continue using the notation from the previous section.
For any ideal $\LL$ of $\oh_0$, we write $\ord(\LL)$ to denote the order of $\LL$ in the class group $\Cl(\oh_0)$.

\begin{lem}\label{noninteg_norm}
  If $\alpha\in\oh_0\setminus\ZZ$, then $N(\alpha)\geq |D_0|/4$.
\end{lem}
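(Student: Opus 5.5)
Since $\oh_0 = \ZZ + c\omega_K\ZZ$, any $\alpha \in \oh_0$ can be written as $\alpha = x + y\,c\omega_K$ with $x,y\in\ZZ$, and $\alpha\notin\ZZ$ exactly when $y\neq 0$. The plan is to compute the norm $N(\alpha)=\alpha\bar\alpha$ explicitly as a binary quadratic form in $(x,y)$, complete the square, and read off the lower bound from the imaginary part.

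Set $D_0 = c^2 D_K$, the discriminant of $\oh_0$. We have $c\omega_K = \frac{cD_K + c\sqrt{D_K}}{2} = \frac{cD_K + \sqrt{D_0}}{2}$. Hence
\begin{equation*}
  \alpha = \Bigl(x + \frac{y c D_K}{2}\Bigr) + \frac{y}{2}\sqrt{D_0},
\end{equation*}
and since $D_0<0$, the norm is a sum of two nonnegative terms:
\begin{equation*}
  N(\alpha) = \Bigl(x + \frac{ycD_K}{2}\Bigr)^2 + \frac{y^2 |D_0|}{4}.
\end{equation*}
Drop the first square and use $y^2\geq 1$ when $y \neq 0$. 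This gives $N(\alpha)\geq |D_0|/4$, as required.

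There is no real obstacle here. The only point needing care is to check that $D_0=c^2D_K$ really is the discriminant of the order of conductor $c$, a standard fact (cf.\ \cite[\S 7]{cox}). Beyond that it is a one-line computation. The bound is sharp exactly when the real part vanishes, which can happen when $cD_K$ is even.
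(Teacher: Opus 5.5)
Your proof is correct and is essentially the paper's argument. The paper writes $\alpha = a + bc_0\omega_K$ and computes $4N(\alpha) = (\Tr\alpha)^2 + b^2|D_0|$, which is your identity multiplied by $4$; your real part $x + ycD_K/2$ is exactly $\tfrac12\Tr(\alpha)$. It then drops the square and uses $b\neq 0$, just as you do.
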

\begin{proof}
  Write $\alpha=a+bc_0 \omega_K$ with $a,b \in \ZZ$.
  We have
  \begin{align*}
    4 N(\alpha) & = (2 \alpha)(2 \overline{\alpha})                                                                     \\
                & = \left((2 a + b c_0 D_K) + b c_0 \sqrt{D_K}\right) \left((2 a + b c_0 D_K) - b c_0 \sqrt{D_K}\right) \\
                & = (2 a + b c_0 D_K)^2 - b^2 c_0^2 D_K                                                                 \\
                & = \big(\Tr(\alpha)\big)^2 + b^2 |D_0|.
  \end{align*}
  Since $\alpha\notin \ZZ$ we have $b \neq 0$ so that $N(\alpha) \geq |D_0| / 4$.
\end{proof}
\begin{rem}
  The special case $\oh_0=\oh_K$ of \cref{noninteg_norm} is \cite[Lemma 1]{boyd}.
\end{rem}

\begin{prop}\label{disc_upper_bound}
  Consider the decomposition of $\ell\oh_0$ into prime ideals of $\oh_0$.
  \begin{enumerate}[label=(\alph*), ref=Proposition~\theprop(\alph*)]
    \item\label{disc_upper_bound-split} If $\ell\oh_0=\LL\widebar{\LL}$, with $\LL\neq\widebar{\LL}$ prime ideals and $\ord(\LL)=n$, then $|D_0|\leq 4\ell^n-1$.
      (This is \cite[Lemma 4.9]{BCP}.)
    \item\label{disc_upper_bound-ramified} If $\ell\oh_0=\LL^2$, with $\LL$ prime and principal, then $|D_0|\leq 4\ell$.
      More precisely, $D_0=c_0^2D_K$ with
      \begin{equation*}
        (D_K,c_0)\in\begin{cases}
          \big\{(-4\ell,1)\big\}          & \text{if }\ell\equiv 1\mod{4}, \\
          \big\{(-\ell,1),(-\ell,2)\big\} & \text{if }\ell\equiv 3\mod{4}, \\
          \big\{(-8,1),(-4,1)\big\}       & \text{if }\ell=2.
        \end{cases}
      \end{equation*}
  \end{enumerate}
\end{prop}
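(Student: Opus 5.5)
For part (a) we simply cite \cite[Lemma 4.9]{BCP}. For completeness, here is the argument, since part (b) follows the same route. Since $\LL^n$ is principal, $\LL^n=\alpha\oh_0$ with $N(\alpha)=N(\LL)^n=\ell^n$. We claim $\alpha\notin\ZZ$. Otherwise $\LL^n=m\oh_0$ with $m^2=\ell^n$, so $\LL^n=\ell^{n/2}\oh_0=\LL^{n/2}\widebar{\LL}^{n/2}$. Unique factorisation of ideals coprime to the conductor (here $\ell\nmid c_0$) would then force $\LL=\widebar{\LL}$, a contradiction. \Cref{noninteg_norm} now gives $\ell^n\geq|D_0|/4$. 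Since $D_0\equiv 0,1\pmod 4$, equality $|D_0|=4\ell^n$ would need $D_0\equiv 0\pmod 4$, and we refine the bound using the identity $4N(\alpha)=\Tr(\alpha)^2+b^2|D_0|$ from the proof of \cref{noninteg_norm}. If $|D_0|=4\ell^n$ then $b=\pm1$ and $\Tr(\alpha)=0$, so $\alpha=\pm\sqrt{D_0}/2$ and $\alpha\oh_0$ is stable under conjugation. That would force $\LL^n=\widebar{\LL}^n$, again a contradiction. Hence $|D_0|\leq 4\ell^n-1$.

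For part (b), write $\LL=\alpha\oh_0$ with $N(\alpha)=N(\LL)=\ell$. Here $\alpha\notin\ZZ$, since an integer has square norm. By the identity above, $4\ell=\Tr(\alpha)^2+b^2|D_0|$ with $b\neq0$, so $|D_0|\leq4\ell$. To pin down $D_0$ exactly, we enumerate the solutions of $4\ell=t^2+b^2|D_0|$ with $|D_0|$ a negative discriminant, and use that $\ell$ ramifies in $\oh_0$. Ramification (with $\ell\nmid c_0$) means $\ell\mid D_K$, hence $\ell\mid D_0$, and together with $t^2\equiv 4\ell\pmod{\ell}$ this gives $\ell\mid t$. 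Since $t^2\leq4\ell$, for $\ell\geq5$ we get $t=0$ (as $\ell^2>4\ell$), so $b^2|D_0|=4\ell$, i.e.\ $|D_0|\in\{4\ell,\ell\}$.

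\begin{itemize}
  \item If $\ell\equiv1\pmod4$, then $-\ell$ is not a discriminant, so $D_0=-4\ell$. This equals $D_K$ because $-\ell$ is squarefree and $-\ell\not\equiv1\pmod 4$, so $c_0=1$.
  \item If $\ell\equiv3\pmod4$, both $-\ell=D_K$ (with $c_0=1$) and $-4\ell=2^2D_K$ (with $c_0=2$) occur.
  \item For $\ell=3$, $t=\pm3$ is also allowed, giving $b^2|D_0|=3$, i.e.\ $D_0=-3$, already in the list. The values $t=0$ give $-12$ and $-3$, matching the case $\ell\equiv 3\pmod 4$.
  \item For $\ell=2$, $t^2\leq 8$ with $t$ even gives $t\in\{0,\pm2\}$. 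Then $8=b^2|D_0|$ or $4=b^2|D_0|$, so $|D_0|\in\{8,4\}$ (note $|D_0|=2$ and $|D_0|=1$ are not discriminants). Both have $c_0=1$, with $D_K=-8$ and $-4$ respectively.
\end{itemize}

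Finally, we check that each listed case really has $\ell\oh_0=\LL^2$ with $\LL$ principal, e.g.\ $\LL=(\sqrt{-\ell})$, $\big((1+\sqrt{-\ell})/2\big)$-type generators, $(1+i)$ and $(\sqrt{-2})$. This verification is not strictly needed, since the statement only asserts membership in the list.

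The main obstacle is the case bookkeeping for small $\ell$. In particular, ruling out spurious solutions requires care: for $\ell=3$ with $t=\pm3$, and for $\ell=2$, one must confirm that $\ell\mid D_0$ and that the candidate $|D_0|$ values are genuine discriminants. One must also make sure the conductor is coprime to $\ell$, as assumed in the setup; this excludes, e.g., $c_0=2$ when $\ell=2$.
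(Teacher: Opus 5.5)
Your proof is correct, but in two places it takes a different route from the paper. In part (a), the paper rules out $|D_0|=4\ell^n$ arithmetically. Equality would give $c_0^2|D_K|=4\ell^n$ with $\ell\nmid c_0$, forcing $\ell\mid D_K$ and contradicting that $\ell$ splits. You instead read off $\Tr(\alpha)=0$ and $b=\pm1$ from the norm identity. Then $\alpha=\pm\sqrt{D_0}/2$ generates a conjugation-stable ideal, so $\LL^n=\widebar{\LL}^n$, contradicting unique factorisation. This is slightly longer, but it reuses the same mechanism as your proof that $\alpha\notin\ZZ$.

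In part (b), the paper first lists the fundamental discriminants $D_K$ divisible by $\ell$ with $|D_K|\leq 4\ell$, obtaining $\{-4\ell,-3\ell\}$ when $\ell\equiv 1\pmod 4$. It then excludes $D_K=-3\ell$ by a separate computation, and finally bounds $c_0$ using $c_0^2|D_K|\leq 4\ell$. Your observation that $\ell\mid D_0$ forces $\ell\mid\Tr(\alpha)$, hence $\Tr(\alpha)=0$ once $\ell\geq 5$, immediately gives $|D_0|\in\{\ell,4\ell\}$, so $-3\ell$ never arises. The cost is separate bookkeeping for $\ell=2,3$, which you carry out correctly.

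If anything, your argument is the more robust one. The paper's exclusion of $-3\ell$ writes $\oh_K=\ZZ[\sqrt{-3\ell}]$, but $-3\ell\equiv 1\pmod 4$, so half-integral $\alpha$ must also be considered. The conclusion survives, but your version needs no such repair.

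One small remark: your closing caveat about $c_0=2$ for $\ell=2$ is vacuous, since $|D_0|\leq 8$ already forces $c_0=1$ there. The hypothesis $\ell\nmid c_0$ is genuinely used in the step ``ramified implies $\ell\mid D_K$'' and in the unique factorisation arguments.
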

\begin{proof}
  \
  \begin{enumerate}
    \item
      Write $\LL^n=\alpha\oh_0$ for some $\alpha\in\oh_0$.
      We have $N(\alpha)=N(\LL)^n=\ell^n$.

      If $\alpha\in\ZZ$ then $\alpha^2=\alpha\overline{\alpha}=N(\alpha)=\ell^n$ implies that $n$ is even and $\alpha=\pm\ell^{n/2}$, therefore $\LLbar=\LL$, contradiction.

      So $\alpha\in\oh_0\setminus\ZZ$, therefore by \cref{noninteg_norm} we get that $|D_0|\leq 4\ell^n$.

      But if equality holds then $c_0^2|D_K|=|D_0|=4\ell^n$.
      Since the conductor $c_0$ of $\oh_0$ is assumed to be coprime to $\ell$, this forces $\ell$ to divide $D_K$, contradicting the fact that $\ell$ is split in $K$.
    \item
      Writing $\LL=\alpha\oh_0$, the reasoning from part (a) tells us that $\alpha\in\oh_0\setminus\ZZ$, so that $|D_0|\leq 4\ell$.
      Now $D_0=c_0^2 D_K$ with $c_0$ coprime to $\ell$, and $D_K$ negative, divisible by $\ell$, and a fundamental discriminant (either $1\mod{4}$ and squarefree, or $4$ times a squarefree integer that is $2,3\mod{4}$).
      This gives the following options for $D_K$:
      \begin{equation*}
        D_K\in\begin{cases}
          \{-4\ell,-3\ell\} & \text{if }\ell\equiv 1\mod{4}, \\
          \{-\ell\}         & \text{if }\ell\equiv 3\mod{4}, \\
          \{-8,-4\}         & \text{if }\ell=2.
        \end{cases}
      \end{equation*}
      We can rule out the case $\ell\equiv 1\mod{4}$ and $D_K=-3\ell$ as follows.
      Here $\oh_K=\ZZ[\sqrt{-3\ell}]$, and we are assuming that $\ell\oh_K=(\alpha\oh_K)^2$ for some $\alpha\in\oh_K$.
      Writing $\alpha=a+b\sqrt{-3\ell}$ with $a,b\in\ZZ$, we have
      \begin{equation*}
        \pm\ell = \alpha^2 = a^2+2ab\sqrt{-3\ell}-3b^2\ell,
      \end{equation*}
      impossible.

      The leaves us with the cases in the statement, and it is easy to see that $c_0=1$ for all of them except when $\ell\equiv 3\mod{4}$.
      \qedhere
  \end{enumerate}
\end{proof}

\subsection{Elliptic curves and isogenies}

Fix a prime $p$ distinct from $\ell$, and $k\geq 1$.
We consider elliptic curves over $\fpk$ and isogenies between them.

The endomorphism ring $\oh$ of an ordinary elliptic curve $E$ over $\fpk$ is an imaginary quadratic order.
(We say that $E$ has \define{complex multiplication}, \define{CM} for short, by $\oh$.)
The ring $\oh$ encodes a good deal of arithmetic information about $E$, such as
\begin{prop}
  Let $\varphi\st E_1 \to E_2$ be an $\ell$-isogeny of (ordinary) elliptic curves with CM by orders $\oh_1$ and $\oh_2$, respectively.
  Then one of the following is true:
  \begin{enumerate}
    \item $\oh_1 = \oh_2$ and we say that $\varphi$ is \define{horizontal};
    \item $[\oh_1 : \oh_2] = \ell$ and we say that $\varphi$ is \define{descending};
    \item $[\oh_2 : \oh_1] = \ell$ and we say that $\varphi$ is \define{ascending}.
  \end{enumerate}
  In the last two cases we say that $\varphi$ is \define{vertical}.
\end{prop}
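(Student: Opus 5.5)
The plan is to compare the two endomorphism rings through the isogeny and its dual, working inside a common imaginary quadratic field. Since $\varphi$ is an isogeny, it induces an isomorphism $\End(E_1)\otimes\QQ \cong \End(E_2)\otimes\QQ$, given by $\alpha \mapsto \varphi\alpha\hat\varphi/\ell$. So both $\oh_1$ and $\oh_2$ are orders in the same field $K$. They are therefore of the form $\ZZ + f_i\oh_K$ with conductors $f_1, f_2$, and they are determined by these conductors. It suffices to show that either $f_1 = f_2$, or one conductor is exactly $\ell$ times the other. Since $[\oh_K : \ZZ+f\oh_K] = f$, these three cases correspond exactly to the three cases of the statement.

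The key step is the pair of inclusions
\begin{equation*}
  \ZZ + \ell\oh_1 \subseteq \oh_2 \quad\text{and}\quad \ZZ + \ell\oh_2 \subseteq \oh_1,
\end{equation*}
viewed inside $K$. For $\alpha\in\oh_1=\End(E_1)$, the composite $\varphi\alpha\hat\varphi$ lies in $\End(E_2)$. Under the identification above it equals $\ell\alpha$, because $\hat\varphi\varphi = [\ell]$ and the identification is $\alpha\mapsto \varphi\alpha\hat\varphi/\ell$. So $\ell\oh_1\subseteq\oh_2$, and symmetrically with $\hat\varphi$ one gets $\ell\oh_2\subseteq\oh_1$. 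The main care needed is making this identification canonical and compatible with the embeddings into $K$; it is standard, since the map $\alpha\mapsto\varphi\alpha\hat\varphi/\ell$ is a ring homomorphism between the rational endomorphism algebras.

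Now translate to conductors. Since $\ZZ+\ell(\ZZ+f_1\oh_K) = \ZZ+\ell f_1\oh_K$, the first inclusion says $f_2 \mid \ell f_1$. Symmetrically, $f_1 \mid \ell f_2$. Write $f_2 = \ell^a m$ and $f_1 = \ell^b m'$ with $m, m'$ coprime to $\ell$. The divisibilities give $m = m'$ and $|a-b|\le 1$. Hence $f_1=f_2$, or $f_2=\ell f_1$ (so $\oh_2\subseteq\oh_1$ with index $\ell$, descending), or $f_1=\ell f_2$ (ascending). This is routine once the key inclusions are established; the only real obstacle is the previous step, justifying that conjugation by $\varphi$ realises multiplication by $\ell$ under the embedding into $K$.
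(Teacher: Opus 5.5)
Your proof is correct: the containments $\ZZ+\ell\oh_1\subseteq\oh_2$ and $\ZZ+\ell\oh_2\subseteq\oh_1$, obtained via $\alpha\mapsto\varphi\alpha\hat\varphi$ and its symmetric counterpart, followed by the conductor divisibilities $f_2\mid\ell f_1$ and $f_1\mid\ell f_2$, is the standard argument from the references (Kohel, Sutherland) that the paper cites instead of giving its own proof. The identification issue you flag is harmless, because orders in $K$ are determined by their conductor and are stable under complex conjugation, so the choice of embedding into $K$ does not matter.
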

\begin{proof}
  See \cite[Proposition 21]{kohel} or \cite[\S2.7]{sutherland}.
\end{proof}

\subsection{Volcano graphs}

\begin{definition}\label{volcano_craters}
  A \define{(volcano) crater} $V_0$ is a regular connected undirected graph that is one of:

  \begin{center}
    \renewcommand{\arraystretch}{1.5}
    \begin{tabular}{clc}
      \toprule
      Notation & description                             & picture                                \\
      \midrule
      $I_1$    & a single vertex                         &
      \begin{tikzpicture}[scale=1, every node/.style={vertex}, baseline={([yshift=-.5ex]current bounding box.center)}]
        \node at (0,0) {};
      \end{tikzpicture}              \\
      $R_1$    & a single vertex with a single self-loop &
      \begin{tikzpicture}[scale=1, every node/.style={vertex}, baseline={([yshift=-.5ex]current bounding box.center)}]
        \node (a1) at (0,0) {};
        \draw[edge] (a1) .. controls +(0.6,-0.7) and +(0.6,0.7) .. (a1);
      \end{tikzpicture}              \\
      $S_1$    & a single vertex with two self-loops     &
      \begin{tikzpicture}[scale=1, every node/.style={vertex}, baseline={([yshift=-.5ex]current bounding box.center)}]
        \node (b1) at (0,0) {};
        \draw[edge] (b1) .. controls +(0.6,-0.7) and +(0.6,0.7) .. (b1);
        \draw[edge] (b1) .. controls +(-0.6,-0.7) and +(-0.6,0.7) .. (b1);
      \end{tikzpicture}              \\
      $R_2$    & two vertices connected by a single edge &
      \begin{tikzpicture}[scale=1, every node/.style={vertex}, baseline={([yshift=-.5ex]current bounding box.center)}]
        \node (d1) at (0,0) {};
        \node (d2) at (1,0) {};
        \draw[edge] (d1) to (d2);
      \end{tikzpicture}              \\[+10pt]
      $S_2$    & two vertices connected by two edges     &
      \begin{tikzpicture}[scale=1, every node/.style={vertex}, baseline={([yshift=-.5ex]current bounding box.center)}]
        \node (e1) at (0,0) {};
        \node (e2) at (1,0) {};
        \draw[edge] (e1) to[bend right=50] (e2);
        \draw[edge] (e1) to[bend left=50] (e2);
      \end{tikzpicture}              \\[+10pt]
      $S_n$    & $n\geq 3$ vertices arranged in a cycle  &
      \begin{tikzpicture}[scale=1, every node/.style={vertex}, baseline={([yshift=-.5ex]current bounding box.center)}]
        \def\n{5} 
        \foreach \i in {1,...,\n}{
            \node (h\i) at ({-0.5 - cos(360/\n*\i)}, {sin(360/\n*\i)}) {};
          }
        \foreach \i [evaluate=\i as \j using {int(mod(\i,\n)+1)}] in {1,...,\n}{
            \ifnum\i=2
              \draw[edge,dashed] (h\i)--(h\j); 
            \else
              \draw[edge] (h\i)--(h\j);        
            \fi
          }
      \end{tikzpicture} \\
      \bottomrule
    \end{tabular}
  \end{center}

\end{definition}

\begin{definition}\label{volcano-def}
  An \define{$\ell$-volcano graph} of depth $d \geq 0$, denoted $(V_0, \ell, d)$, is a connected undirected graph whose vertices admit a partition into levels $\bigsqcup_{i=0}^d \mathcal{V}_i$, where $V_i$ denotes the subgraph induced by $\mathcal{V}_i$, such that:
  \begin{enumerate}
    \item The subgraph $V_0$ is a volcano crater as described in \cref{volcano_craters}.
    \item For all $0 < i \leq d$, the subgraph $V_i$ is totally disconnected.
    \item For $0 \leq i < d$, each vertex in $\mathcal{V}_i$ has degree $\ell + 1$; and each vertex in $\mathcal{V}_d$ has degree $1$.
    \item For $0 < i \leq d$, each vertex in $\mathcal{V}_i$ has exactly one neighbour in $\mathcal{V}_{i-1}$.
  \end{enumerate}
\end{definition}

\subsection{Volcanoes inside isogeny graphs}

Let $\Phi_\ell\in\FF_p[X,Y]$ denote the modulo $p$ reduction of the $\ell$-modular polynomial (see \cite[Section 2.3]{sutherland}).
It has the property that for all $j_1,j_2\in\fpk$, we have $\Phi_\ell(j_1,j_2)=0$ if and only if $j_1,j_2$ are the $j$-invariants of elliptic curves over $\fpk$ that are related by an $\ell$-isogeny defined over $\fpk$.

Let $S\subseteq\fpk$ denote the set of $j$-invariants of supersingular elliptic curves over $\fpk$.
In this paper, the (ordinary, simple) \define{$\ell$-isogeny graph $\gpk$} has as vertex set $\fpk\setminus (S\cup \{0,1728\})$, and there is an edge $(j_1,j_2)$ with the same multiplicity as that of $j_2$ as a root of $\Phi_\ell(j_1,Y)$.
Due to our restrictions on the vertex set, the graph $\gpk$ can be considered to be undirected.

Exploiting the dictionary between elliptic curves and orders, Kohel determined the structure of the connected components of the isogeny graph $\gpk$:
\begin{thm}[Kohel]
  Let $V$ be a connected component of $\gpk$, with vertex set $\mathcal{V}$.
  Fix some elliptic curve $E/\FF_{p^k}$ such that $j(E) \in \mathcal{V}$ and let $K=\End(E)\otimes\QQ$.
  Let $\pi_E$ denote the Frobenius endomorphism of $E$, let $v=\left[\oh_K : \ZZ\left(\pi_E\right)\right]$ and $d=\nu_\ell(v)$.

  Then $V$ is an $\ell$-volcano of depth $d$ such that
  \begin{enumerate}
    \item for $0\leq i\leq d$, every $E/\FF_{p^k}$ with $j(E) \in \mathcal{V}_i$ has CM by $\oh_i$, an order in $K$;
    \item $\ell \nmid \left[\oh_K : \oh_0\right]$ and for $0 \leq i < d$, $\left[\oh_i : \oh_{i+1}\right] = \ell$;
    \item the crater $V_0$ is regular of degree $1 + \left(\frac{D_K}{\ell}\right)$ and has $\ord_{\Cl(\oh_0)}\left[\LL\right]$ vertices, where $\LL$ is a prime ideal dividing $\ell\oh_0$.
  \end{enumerate}
\end{thm}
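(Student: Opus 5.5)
The plan is to deduce Kohel's theorem from the correspondence between ordinary curves and ideal classes of orders in $K$. First we fix the isogeny class: all curves in $V$ are $\ell$-isogenous, so they share the Frobenius trace and the field $K$. Their endomorphism rings are then orders $\oh$ with $\ZZ[\pi_E]\subseteq\oh\subseteq\oh_K$, i.e.\ orders whose conductor divides $v$. By the classification of $\ell$-isogenies proved just above, an $\ell$-isogeny changes the conductor by at most a factor of $\ell$. Hence the $\ell$-adic valuation $i=\nu_\ell([\oh_K:\End(E)])-\nu_\ell([\oh_K:\oh_0])$ is the natural level function. The prime-to-$\ell$ part of the conductor is constant on $V$, and we call the corresponding order with $\ell\nmid[\oh_K:\oh_0]$ the crater order $\oh_0$. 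The range of levels is $0\le i\le d$ with $d=\nu_\ell(v)$, because $\oh_d\supseteq\ZZ[\pi_E]$ holds exactly when $\ell^d$ divides the $\ell$-part of $v$. This gives parts (1) and (2), once we show that every level actually occurs in the component.

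Next we count edges at a vertex $E$ with $\End(E)=\oh_i$ by examining the $\ell+1$ cyclic subgroups of $E[\ell]$, all defined over $\fpk$ or not according to the action of $\pi_E$.

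When $i<d$, we have $\pi_E\in\ZZ+\ell\oh_i$, so $\pi_E$ acts as a scalar on $E[\ell]$. All $\ell+1$ kernels are then rational, which gives degree $\ell+1$.

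The horizontal kernels are the kernels of invertible $\oh_i$-ideals of norm $\ell$, namely $E[\LL]$ for prime ideals $\LL\mid\ell\oh_i$. They exist only for $i=0$, since for $i>0$ the prime $\ell$ divides the conductor and no invertible ideal of norm $\ell$ exists. There are $1+\left(\frac{D_K}{\ell}\right)$ of them.

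For $i>0$, exactly one kernel is ascending. It is the kernel of $E\to E'$, where $E'$ is the curve with CM by $\oh_{i-1}$ obtained via $\oh_{i-1}\otimes$; equivalently, it is the subgroup $E[\ell]\cap\ell\oh_{i-1}E$. The remaining kernels are descending.

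When $i=d$, the Frobenius $\pi_E$ no longer lies in $\ZZ+\ell\oh_d$, so it acts on $E[\ell]$ as a nonscalar matrix. It then has at most two eigenlines, and a count shows that only the ascending one is rational. This gives degree $1$. This yields conditions (3) and (4) of \cref{volcano-def}, and shows that the levels $\mathcal V_i$ for $i>0$ are totally disconnected.

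For the crater we use the free transitive action of $\Cl(\oh_0)$ on the set of $j$-invariants with CM by $\oh_0$, and here we restrict to those in this isogeny class. A horizontal $\ell$-isogeny corresponds to acting by $[\LL]$ or $[\LLbar]=[\LL]^{-1}$. The crater component through $E$ is therefore the orbit of $\langle[\LL]\rangle$, which has $\ord[\LL]$ vertices, each of degree $1+\left(\frac{D_K}{\ell}\right)$. Comparing with \cref{volcano_craters} recovers the cases $I_1$, $R_1$, $R_2$, $S_1$, $S_2$, $S_n$. Multiplicities of edges, such as the double edges for $S_2$, are matched using the fact that the multiplicities of roots of $\Phi_\ell$ count isogenies up to isomorphism. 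Here we use the exclusion of $j=0$ and $j=1728$, which avoids extra automorphisms. Connectedness of the whole picture follows because every vertex at level $i>0$ ascends to level $0$.

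The main obstacle is the precise rationality count at the bottom level $i=d$ and the horizontal multiplicities. Both require an explicit understanding of $\pi_E$ modulo $\ell\oh_i$. I would first reduce this to linear algebra: write $\pi_E=a+b\,c\ell^{i}\omega_K$ with $\ell\nmid b$ exactly when $i=d$, and read off the eigenlines of the resulting matrix on $E[\ell]\cong\oh_i/\ell\oh_i$. Everything else is standard; one can also cite \cite{kohel} and \cite{sutherland} directly for these facts.
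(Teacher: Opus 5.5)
The paper gives no proof of this theorem: it is quoted from Kohel \cite{kohel} (cf.\ \cite{sutherland}). Your outline is essentially the standard argument from those sources: levels from the $\ell$-part of the conductor, $\pi_E\in\ZZ+\ell\oh_i$ for $i<d$, horizontal kernels $E[\LL]$ only at level $0$, a unique ascending kernel, and the $\Cl(\oh_0)$-action on the crater. It is sound in outline, but one point genuinely needs correcting, and the quoted statement shares the defect.

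Deleting $j=0$ and $j=1728$ from the vertex set does not ``avoid extra automorphisms''. When $\oh_0=\ZZ[i]$ or $\ZZ[\zeta_3]$, it deletes the crater. For instance, if $\oh_0=\ZZ[i]$ and $d=1$, every level-$1$ vertex has only its ascending edge, which goes to $1728$. Such a vertex is therefore isolated in $\gpk$, although $\nu_\ell(v)=1$. So your step ``every vertex at level $i>0$ ascends to level $0$'' fails there, and the theorem itself is false in that case. The argument needs $\oh_0\notin\{\ZZ[i],\ZZ[\zeta_3]\}$; this is why Sutherland restricts to components not containing $0$ or $1728$, and why the paper later assumes $D_K<-4$. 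Under that hypothesis no vertex of the component, and no neighbour of one, has $j\in\{0,1728\}$. Hence every automorphism group is $\{\pm1\}$, which is what you actually use.

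Some smaller points:
\begin{itemize}
\item \emph{Uniqueness of the ascending kernel.} Your description via $\oh_{i-1}\otimes_{\oh_i}E$ gives existence of an ascending kernel, not uniqueness. For uniqueness, use $T_\ell E\cong\oh_i\otimes\ZZ_\ell$: an index-$\ell$ superlattice whose multiplier ring strictly contains $\oh_i\otimes\ZZ_\ell$ must contain $\oh_{i-1}\otimes\ZZ_\ell$, hence equal it. This uniqueness is also what shows that level $0$ of $V$ is a single $\langle[\LL]\rangle$-orbit: a path that leaves the crater must return through the vertex it left. It also shows that distinct descending kernels give distinct neighbours.
\item \emph{The ascending kernel.} Your formula $E[\ell]\cap\ell\oh_{i-1}E$ does not work as written. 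The set $\ell\oh_{i-1}\subseteq\oh_i$ consists of surjective endomorphisms, so $\ell\oh_{i-1}E=E$. The ascending kernel is $E[\ell\oh_{i-1}]=\ker(\gamma|_{E[\ell]})=\gamma E[\ell]$, with $\gamma=c\ell^i\omega_K$. With this, your ``main obstacle'' is immediate. For $i>0$ we have $E[\ell]\cong\oh_i/\ell\oh_i\cong\FF_\ell[\gamma]/(\gamma^2)$. At $i=d$, $\pi_E=a+b\gamma$ with $\ell\nmid b$, and its only eigenline is $\FF_\ell\gamma$.
\item \emph{Degrees versus stable kernels.} To equate degrees with numbers of $\pi_E$-stable kernels, you need that a non-stable $C$ never has $j(E/C)\in\fpk$. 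Suppose it did. Then $E/C\cong E/\pi_E(C)$, and composing with a dual gives an endomorphism of $E$. Using $\Aut(E)=\{\pm1\}$, its kernel is cyclic of order $\ell^2$ and contains $C$. Since $\End(E)$ is commutative, that kernel is $\pi_E$-stable, hence so is its unique subgroup $C$ of order $\ell$, a contradiction.
\end{itemize}
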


Moreover, as explained in \cite[Proposition 3.14 and Definition 4.2]{BCP}, we can place the crater $V_0$ in the classification from \cref{volcano_craters} as follows:
\begin{prop}\label{craters_orders}
  Let $V_0$ be the crater of a volcano in $\gpk$, let $\oh_0$ be the endomorphism ring of any vertex in $V_0$, and let $\LL \subseteq \oh_0$ be a prime ideal above $\ell$.
  Then, $V_0$ is of type
  \begin{enumerate}
    \item $I_1$ if $\ell$ is inert in $\oh_0$;
    \item $R_1$ if $\ell$ is ramified in $\oh_0$ and $\LL$ is principal;
    \item $S_1$ if $\ell$ is split in $\oh_0$ and $\LL$ is principal;
    \item $R_2$ if $\ell$ is ramified in $\oh_0$ and $\LL$ is not principal;
    \item $S_n$ if $\ell$ is split in $\oh_0$ and $[\LL]$ has order $n\geq 2$ in $\Cl(\oh_0)$.
  \end{enumerate}

\end{prop}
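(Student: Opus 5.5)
The plan is to combine Kohel's theorem with the classification of craters in \cref{volcano_craters}. By Kohel's theorem (part 3), the crater $V_0$ is a regular graph of degree $1+\left(\frac{D_K}{\ell}\right)$ with $n=\ord_{\Cl(\oh_0)}[\LL]$ vertices. Since $\ell\nmid[\oh_K:\oh_0]$, the splitting behaviour of $\ell$ in $\oh_0$ matches that in $\oh_K$, and so it is governed by $\left(\frac{D_K}{\ell}\right)$ (the Kronecker symbol when $\ell=2$). Inert gives degree $0$, ramified gives degree $1$, and split gives degree $2$. It remains to check, in each case, which of the graphs in the table fits the given degree and vertex count. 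The one subtle point is how loops and multiple edges are counted, and this will be settled by the edge multiplicities in $\gpk$ coming from the roots of $\Phi_\ell$.

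\textbf{Inert case.} No prime of norm $\ell$ lies over $\ell$; the only prime is $\ell\oh_0$ itself, which is principal, so $n=1$. We therefore get a single vertex of degree $0$, which is $I_1$.

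\textbf{Ramified case.} Here $\ell\oh_0=\LL^2$, so $[\LL]^2=1$ and $n\in\{1,2\}$. If $\LL$ is principal, there is one vertex of degree $1$. The horizontal isogeny corresponding to $\LL$ maps $E$ to itself, which gives a single self-loop, and this is $R_1$. If $\LL$ is not principal, there are two vertices, each of degree $1$, joined by a single edge, which is $R_2$.

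\textbf{Split case.} Here $\ell\oh_0=\LL\overline{\LL}$, and the two horizontal isogenies from a crater vertex $E$ go to $\LL*E$ and $\overline{\LL}*E=\LL^{-1}*E$. If $n=1$, both go back to $E$, giving two self-loops, which is $S_1$. If $n=2$, both go to the same neighbour, giving a double edge, which is $S_2$. If $n\ge3$, the vertices $\LL^i*E$ for $i\in\ZZ/n$ are distinct, and each is joined to its two neighbours in the cyclic order, giving the cycle $S_n$.

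The main obstacle is justifying these multiplicities. We must show that in the $n=1$ and $n=2$ cases the edges are genuinely doubled, and that in the ramified principal case the loop is single. This comes from the free transitive action of $\Cl(\oh_0)$ on the crater, together with the correspondence between horizontal $\ell$-isogenies and invertible ideals of norm $\ell$. Since $j\neq0,1728$, there are no extra automorphisms to collapse these edges. This is precisely the content of \cite[Proposition 3.14 and Definition 4.2]{BCP}, so in writing the proof we would cite it for these multiplicities.
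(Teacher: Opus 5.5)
Your proposal is correct and takes essentially the paper's route. The paper gives no separate argument: it reads the degree $1+\left(\frac{D_K}{\ell}\right)$ and the vertex count $\ord[\LL]$ off Kohel's theorem and cites \cite[Proposition 3.14 and Definition 4.2]{BCP} for the placement in the table, which is exactly what your case analysis spells out. One small wording fix: $\Cl(\oh_0)$ acts simply transitively on the set of all curves with CM by $\oh_0$, and the crater is the orbit of $E$ under the subgroup $\langle[\LL]\rangle$ rather than a $\Cl(\oh_0)$-torsor; your argument only uses freeness, so nothing breaks.
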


\begin{rem}
  This Proposition explains our notation for volcano craters in \cref{volcano_craters}: the letter stands for the decomposition type (inert, ramified, split), and the subscript represents the number of vertices in the crater, which is at the same time the order of a prime ideal above $\ell$ in the class group.
\end{rem}

\begin{definition}\label{compatible_order}
  Given a crater $V_0$ and a prime $\ell$, we say that an order $\oh_0$ in an imaginary quadratic field $K$ is \define{compatible} with $(V_0,\ell)$ if the conductor of $\oh_0$ is coprime to $\ell$ and $\oh_0$ corresponds to the type of $V_0$ as described in \cref{craters_orders}.
\end{definition}

\section{Class groups of imaginary quadratic orders}\label{sect:orders}

Recall the notation from \cref{subsect:prelim_orders}: $\oh_0$ is an order of conductor $c$ (coprime to $\ell$) in an imaginary quadratic field $K$.
For each $d\geq 0$, we let $\oh_d$ be the order of conductor $c\ell^d$.

If $d'\geq d$, then the natural surjective homomorphism between the class groups is
\begin{equation*}
  \Cl(\oh_{d'})\to \Cl(\oh_d)\qquad\text{given by }[\pp\cap \oh_{d'}]\mapsto [\pp\cap \oh_d]\text{ for any prime ideal }\pp\subseteq \oh_K.
\end{equation*}
Let $S_d=\Cl(\oh_d)[\ell^\infty]$ denote the Sylow $\ell$-subgroup of the class group.
We let $\kappa_d$ be the kernel of the restriction of the natural homomorphism $\Cl(\oh_d)\to\Cl(\oh_0)$ to the Sylow $\ell$-subgroups:
\begin{equation}\label{eq:ses}
  1\to \kappa_d\to S_d\xrightarrow{\;\;\varphi_d\;\;} S_0\to 1.
\end{equation}

\subsection{The kernels}

To describe the kernels $\kappa_d$ we use another short exact sequence:
\begin{equation}\label{eq:kd}
  1\to \big(\ZZ/\ell^d\ZZ\big)^\times[\ell^\infty] \xrightarrow{\;\;\iota\;\;} \big(\oh_K/\ell^d\oh_K\big)^\times[\ell^\infty] \to \kappa_d\to 1.
\end{equation}
Sequences \eqref{eq:ses} and \eqref{eq:kd} are special cases of the much more general \cite[Propositions 6.4 and 6.6]{kopp}.
We include in the appendix (see \cref{kappa_n}) a simple argument that deduces them from the special case $\oh_0=\oh_K$ that appears in \cite[Equations (7.25) and (7.27)]{cox}.

We also need to relate $\kappa_{d+1}$ and $\kappa_d$ in a precise manner.
Consider the commutative diagram given below, where the map $\pi'_d\st\kappa_{d+1}\to \kappa_d$ is the restriction of the natural surjective homomorphism $\pi_d\st S_{d+1}\to S_d$.
It is easy to see that
\begin{equation*}
  \ker \big(\pi_d\st S_{d+1}\to S_d\big)\subseteq \kappa_{d+1},
\end{equation*}
so we can make the identifications in the diagram:
\begin{equation}\label{kappa_diag}
  \begin{tikzcd}[column sep=2em,row sep=1.5em,trim left=-4cm]
    & 1 \arrow[d] & 1 \arrow[d] \\
    & \ker \pi'_d \arrow[d] \arrow[r, equal] & \ker \pi_d \arrow[d]\\
    1 \arrow[r] & \kappa_{d+1} \arrow[r] \arrow[d, "\pi'_d"] & S_{d+1} \arrow[r, "\varphi_{d+1}"] \arrow[d, "\pi_d"] & S_0 \arrow[r] \arrow[d, equal] & 1 \\
    1 \arrow[r] & \kappa_d \arrow[r] \arrow[d] & S_d \arrow[r, "\varphi_d"] \arrow[d] & S_0 \arrow[r] & 1 \\
    & 1 & 1
  \end{tikzcd}
\end{equation}

The following result completely describes the groups $\kappa_d$ and the maps $\pi'_d\st\kappa_{d+1}\to\kappa_d$.
It is obtained by restricting to Sylow $\ell$-subgroups in \cref{lambda-classification}, which distills the results on the unit groups of quadratic orders from \cite{sittinger} and adapts them to our needs.

\begin{thm}\label{kappa-classification}
  For $d=0$ we have $\kappa_0=1$ and $\pi'_0\st\kappa_1\to\kappa_0=1$.

  Assume now that $d \geq 1$.
  In each of the following cases, the given conditions on $d$, $\ell$, and $\oh_K$ imply the stated group structure for $\kappa_d$, and the stated type of morphism $\pi'_d\st\kappa_{d+1}\to\kappa_d$.
  (Below, ``$\can$'' refers to the canonical surjections of type $\ZZ/\ell^{n+1}\ZZ\to\ZZ/\ell^n\ZZ$.)

  \begin{center}
    \settabprefix{Theorem~\thethm}
    \setcounter{tabenum}{0}
    \begin{tabular}{lllllll}
      \toprule
                                                & $d$      & $\ell$   & $\ell$ in $\oh_K$ & extra assumptions      & $\kappa_d$                           & $\pi'_d$          \\ \midrule
      \tabitem{kappa-class-split-inert-general} & $\geq 1$ & $\geq 3$ & split or inert    & none                   & $\ZZ/\ell^{d-1}\ZZ$                  & $\can$            \\ \midrule
      \tabitem{kappa-class-ramified-general}    & $\geq 1$ & $=2$     & ramified          & $D_K\equiv 8\mod{16}$  & $\ZZ/\ell^d\ZZ$                      & $\can$            \\
                                                &          & $=3$     &                   & $D_K\equiv 3\mod{9}$                                                              \\
                                                &          & $\geq 5$ &                   & none                                                                              \\ \midrule
      \tabitem{kappa-class-ramified-special}    & $\geq 1$ & $=2$     & ramified          & $D_K\equiv 12\mod{16}$ & $\ZZ/\ell^{d-1}\ZZ\times\ZZ/\ell\ZZ$ & $\can \times \id$ \\
                                                &          & $=3$     &                   & $D_K\equiv 6\mod{9}$                                                              \\ \midrule
      \tabitem{kappa-class-split-inert-special} & $\geq 2$ & $=2$     & split or inert    & none                   & $\ZZ/2^{d-2}\ZZ\times\ZZ/2\ZZ$       & $\can \times \id$ \\
      \bottomrule
    \end{tabular}
  \end{center}

  The remaining special case (in (d)) is: for $\ell=2$ split or inert and $d<2$ we have $\kappa_1\cong 0$.
\end{thm}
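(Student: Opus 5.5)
The plan is to compute $\kappa_d$ directly from the short exact sequence \eqref{eq:kd}, which identifies
\begin{equation*}
  \kappa_d\cong \big(\oh_K/\ell^d\oh_K\big)^\times[\ell^\infty]\,\big/\,\iota\big((\ZZ/\ell^d\ZZ)^\times[\ell^\infty]\big).
\end{equation*}
Both groups depend only on the completion $\oh_{K,\ell}=\oh_K\otimes\ZZ_\ell$, so everything becomes a local computation. For $d=0$ both groups are trivial, so $\kappa_0=1$. For $d\ge1$ I will use the standard filtration: the $\ell$-part of $(\oh_{K,\ell}/\ell^d)^\times$ is the image of the principal units $U^{(1)}$, because the residue-field units have order prime to $\ell$. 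The same holds for $(\ZZ/\ell^d)^\times[\ell^\infty]$, which is the image of $1+\ell\ZZ_\ell$. The maps $\pi'_d$ are then induced by reduction $\oh_K/\ell^{d+1}\to\oh_K/\ell^d$, so they are surjective and compatible with the quotient description. To identify $\pi'_d$ it suffices to track explicit generators through the tower.

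\emph{Case (a), $\ell\ge3$, $\ell$ split or inert.} Here $\oh_{K,\ell}$ is $\ZZ_\ell\times\ZZ_\ell$ or the unramified quadratic extension of $\ZZ_\ell$. The $\ell$-adic logarithm gives an isomorphism $1+\ell\oh_{K,\ell}\cong\ell\oh_{K,\ell}$, since $e=1<\ell-1$. Modulo $\ell^d$ this yields $(\ZZ/\ell^{d-1})^2$, and the image of $\iota$ is the rank-one saturated submodule $\ell\ZZ_\ell\subset\ell\oh_{K,\ell}$. The quotient is therefore $\ZZ/\ell^{d-1}$, generated by the class of, for example, $1+\ell\omega_K$. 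The transition maps are reductions of this generator, hence $\can$.

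\emph{Case (b), ramified.} Let $\pi$ be a uniformiser, so $\ell^d\oh_{K,\ell}=\pi^{2d}\oh_{K,\ell}$ and $|U^{(1)}/U^{(2d)}|=\ell^{2d-1}$. The image of $\iota$ has order $\ell^{d-1}$, so $|\kappa_d|=\ell^d$. When $K_\ell$ contains no nontrivial $\ell$-power roots of unity, $U^{(1)}$ is torsion-free of rank $2$ over $\ZZ_\ell$, and $1+\ell\ZZ_\ell$ sits inside it as a direct summand, since it is saturated. The quotient is then $\ZZ_\ell$ topologically generated by $1+\pi$. Truncating gives $\ZZ/\ell^d$ with $\pi'_d=\can$. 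This covers $\ell\ge5$ automatically, because $[\QQ_\ell(\zeta_\ell):\QQ_\ell]=\ell-1>2$. For $\ell=2,3$, the congruence conditions $D_K\equiv 8\bmod 16$ and $D_K\equiv3\bmod 9$ are exactly the ones making $K_\ell\neq\QQ_2(i),\QQ_3(\sqrt{-3})$. For $\ell=2$ I also need to check that $-1$ does not obstruct the summand claim, i.e.\ that $\sqrt{\pm1}$-type torsion does not enter.

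\emph{Cases (c), (d).} The remaining cases are exactly those where $\zeta_\ell$, or $i$ when $\ell=2$, lies in $K_\ell$ or interacts with $-1\in\ZZ_2$. For $D_K\equiv12\bmod16$ one has $K_2=\QQ_2(i)$, and for $D_K\equiv 6\bmod 9$ one has $K_3=\QQ_3(\sqrt{-3})$. In these cases $U^{(1)}\cong\mu\times\ZZ_\ell^2$, and the root of unity survives in the quotient as an extra $\ZZ/\ell$ factor that is stable under reduction (hence the $\id$ component). The free part contributes $\ZZ/\ell^{d-1}$ via $\can$. In case (d), with $\ell=2$ unramified, $1+2\oh_{K,2}$ has torsion $\{\pm1\}$ in both $\ZZ_2$ and $\oh_{K,2}$. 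The logarithm is an isomorphism only on $1+4\oh$, which is why the quotient is $\ZZ/2^{d-2}\times\ZZ/2$ for $d\ge2$ and trivial for $d=1$. The order count is $|(\oh/2^d)^\times[2^\infty]|=2^{2d-2}$ against $|(\ZZ/2^d)^\times|=2^{d-1}$, giving $2^{d-1}$. The $\ZZ/2$ comes from an element such as $1+2\omega_K$ whose square lies in the image of $\iota$ up to higher terms.

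\emph{Main obstacle.} I expect the main difficulty to be the careful bookkeeping in the $\ell=2,3$ cases (c) and (d). There one must find explicit generators and verify that the extra $\ZZ/\ell$ generator reduces to a generator rather than to something absorbed by the cyclic factor, so that $\pi'_d$ really is $\can\times\id$ and not a twisted map. I would handle this by writing explicit representatives and checking relations modulo $\ell^{d+1}$ by hand, for example $1+2\omega_K$ and $\zeta$ or $1+\pi$.
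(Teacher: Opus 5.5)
Your local approach (principal units and the $\ell$-adic logarithm in $\oh_{K,\ell}$) is a genuinely different route from the paper's, which instead reads off explicit generators of $(\oh_K/\ell^d\oh_K)^\times$ taken from Sittinger. For odd $\ell$ it works: cases (a), (b) with $\ell\geq 3$, and (c) with $\ell=3$, go through essentially as you describe. The reason is that for odd $\ell$, a principal unit $x\notin\QQ_\ell$ with $x^\ell\in\QQ_\ell$ forces $\zeta_\ell\in K_\ell$.

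At $\ell=2$, however, your organising principle (that the extra $\ZZ/\ell\ZZ$ comes from $\ell$-power roots of unity in $K_\ell$) is wrong. The resulting gap is not mere bookkeeping.
\begin{itemize}
\item Your claim that $D_K\equiv 12\bmod 16$ gives $K_2=\QQ_2(i)$ is false. This congruence means $N\equiv 3\bmod 4$, and only $N\equiv 7\bmod 8$ gives $\QQ_2(i)$.
\item For $N\equiv 3\bmod 8$, e.g.\ $K=\QQ(\sqrt{-5})$, one has $K_2=\QQ_2(\sqrt{3})$. Its only $2$-power roots of unity are $\pm1\in\im\iota$. Your mechanism then yields no extra factor and would put this field in case (b), predicting $\kappa_2\cong\ZZ/4\ZZ$.
\item But every unit $a+b\sqrt{-5}$ of $\oh_K/4\oh_K$ (so $a+b$ odd) has square $a^2-5b^2+2ab\sqrt{-5}\equiv\pm1\bmod 4$. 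Hence $\kappa_2\cong(\ZZ/2\ZZ)^2$. The paper treats exactly these two subcases, $D_K\equiv 28$ and $D_K\equiv 12\bmod 32$, separately before merging them.
\item Similarly in (d), the torsion of $1+2\oh_{K,2}$ is $\{\pm1\}^2$ in the split case, not $\{\pm1\}$. In the inert case it is only $\{\pm1\}$, although $\kappa_d$ is non-cyclic for $d\geq 3$.
\end{itemize}

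What you actually need is the torsion of $M=U^{(1)}/(1+2\ZZ_2)$, not of $U^{(1)}$. It contains the class of every principal unit $x\notin\QQ_2$ with $x^2\in\QQ_2$:
\begin{itemize}
\item $\sqrt{N}$ when $N\equiv 3\bmod 4$, which covers $\QQ_2(i)$ and $\QQ_2(\sqrt3)$ uniformly;
\item $\sqrt{D_K}=2\omega_K-D_K$ when $2$ is split or inert.
\end{itemize}
When $N\equiv 2\bmod 4$, every element of $\QQ_2^\times\sqrt{N}$ has odd valuation, so no such unit exists. This is what settles the ``$-1$ obstruction'' you left open in (b).

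Your candidate generator for the $\ZZ/2\ZZ$ factor in (d) is also wrong. We have $(1+2\omega_K)^2=1-(D_K^2-D_K)+4(1+D_K)\omega_K$ with $v_2(1+D_K)=1$. So $(1+2\omega_K)^2=a(1+8u\omega_K)$ with $a\in 1+2\ZZ_2$ and $u\in\ZZ_2^\times$. Hence $1+2\omega_K$ has order $2^{d-2}$ in $\kappa_d$ for $d\geq 3$: it generates the cyclic factor. Having its square ``in $\im\iota$ up to higher terms'' does not detect order $2$.

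Once the torsion of $M$ is identified this way, your plan can be completed. In cases (c) and (d), the image $F$ of $U^{(2)}$ in $M$ is a complement to the torsion. An index count, together with $U^{(n+e)}=(U^{(n)})^{\ell}$ in the range where the logarithm is an isomorphism, shows that the image of $1+\ell^d\oh_{K,\ell}$ in $M$ is $\ell^{d-1}F$ (resp.\ $2^{d-2}F$ in (d)). This gives the groups $\kappa_d$ and the $\can\times\id$ shape of $\pi'_d$ simultaneously.
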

\begin{proof}
  Follows directly from \cref{lambda-classification} by taking Sylow $\ell$-subgroups and observing how the homomorphism $\pi'_d$ maps the explicitly described generators.
\end{proof}

\begin{rem}
  Just as in \cref{lambda-classification}, the congruence classes for $D_K$ listed for $\ell=2,3$ are the only possible ones given that we assume $\ell$ to ramify in $\oh_K$.
\end{rem}

We summarise the information about $\ker\pi'_d$ from \cref{kappa-classification} here:
\begin{cor}\label{kerpid}
  We have
  \begin{equation*}
    \ker\pi'_0\cong
    \begin{cases}
      \ZZ/\ell\ZZ & \text{if $\ell$ is ramified in $\oh_K$} \\
      1           & \text{otherwise}
    \end{cases}
    \quad\text{and}\quad
    \ker\pi'_d\cong\ZZ/\ell\ZZ\qquad\text{for all }d\geq 1.
  \end{equation*}
\end{cor}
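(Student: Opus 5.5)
The plan is to read off the kernels directly from \cref{kappa-classification}, treating $d=0$ and $d\geq 1$ separately. In every case $\pi'_d$ is a surjection of finite abelian $\ell$-groups, so its kernel has order $|\kappa_{d+1}|/|\kappa_d|$. Because the table also gives the explicit form of $\pi'_d$ ($\can$ or $\can\times\id$), we can identify the kernel's structure and not only its order.

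For $d=0$ we have $\kappa_0=1$, so $\ker\pi'_0=\kappa_1$, and we compute $\kappa_1$ from the table with $d=1$.
\begin{itemize}
\item If $\ell\geq 3$ is split or inert, row (a) gives $\kappa_1\cong\ZZ/\ell^0\ZZ=1$.
\item If $\ell=2$ is split or inert, the remark after the table gives $\kappa_1\cong 0$.
\item If $\ell$ is ramified and we are in row (b), then $\kappa_1\cong\ZZ/\ell\ZZ$.
\item If $\ell$ is ramified and we are in row (c), then $\kappa_1\cong\ZZ/\ell^0\ZZ\times\ZZ/\ell\ZZ\cong\ZZ/\ell\ZZ$.
\end{itemize}
The remark following \cref{kappa-classification} says the congruence conditions listed for $\ell=2,3$ exhaust the ramified cases. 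So the ramified case always gives $\ZZ/\ell\ZZ$, and the unramified case always gives $1$.

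For $d\geq 1$ we go through the rows again, checking that the row used for $d$ also applies at $d+1$. In rows (a) and (b), $\pi'_d$ is the canonical surjection $\ZZ/\ell^{m+1}\ZZ\to\ZZ/\ell^m\ZZ$, with $m=d-1$ or $m=d$. Its kernel is $\ell^m\ZZ/\ell^{m+1}\ZZ\cong\ZZ/\ell\ZZ$.

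In rows (c) and (d), $\pi'_d$ is $\can\times\id$. Its kernel is $\ker(\can)\times 1\cong\ZZ/\ell\ZZ$.

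One edge case needs separate treatment: $\ell=2$ split or inert with $d=1$. Here $\kappa_1=0$, and $\kappa_2\cong\ZZ/2^0\ZZ\times\ZZ/2\ZZ\cong\ZZ/2\ZZ$ by row (d). Hence $\ker\pi'_1=\kappa_2\cong\ZZ/2\ZZ$, as required.

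The only real obstacle is bookkeeping. We must check that every combination of $\ell$ and splitting behaviour is covered, which relies on the remark about exhaustive congruence classes. We must also handle separately the boundary $d=1$ for $\ell=2$ unramified, where row (d) does not yet apply to $\kappa_1$.
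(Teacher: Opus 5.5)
Your proposal is correct and follows the paper's approach: the paper gives no separate proof and states the corollary as a summary of \cref{kappa-classification}. It is obtained exactly as you do, using $\kappa_0=1$ (so $\ker\pi'_0=\kappa_1$) and reading off the kernels of the $\can$ and $\can\times\id$ maps for $d\geq 1$. Your explicit treatment of the $\ell=2$ unramified boundary case $d=1$ (via $\kappa_1\cong 0$ and $\kappa_2\cong\ZZ/2\ZZ$) fills in the one detail the paper leaves implicit.
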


\subsection{Split exact sequence}

In the special case where the short exact sequence
\begin{equation}\label{sn_ses}
  1\to \kappa_n\to S_n\xrightarrow{\;\;\varphi_n\;\;} S_0\to 1
\end{equation}
splits, we can give a more precise description of $\kappa_n$ and how it varies with $n$:
\begin{lem}\label{lem:splitting}
  Given an isomorphism $h_{d+1}\st S_{d+1}\to \kappa_{d+1}\times S_0$ coming from a splitting of the short exact sequence \eqref{sn_ses} for $n=d+1$, there exists an isomorphism $h_d\st S_d\to \kappa_d\times S_0$ (associated with a splitting of \eqref{sn_ses} for $n=d$) such that the following square commutes:
  \begin{equation*}
    \begin{tikzcd}[column sep=4em,row sep=2em,trim left=-2.5cm]
      \kappa_{d+1}\times S_0 \arrow[d, "\pi'_d\times\id_{S_0}"]
      & S_{d+1} \arrow[d, "\pi_d"] \arrow[l, "h_{d+1}"]\\
      \kappa_d\times S_0
      & S_d. \arrow[l, "h_d"]
    \end{tikzcd}
  \end{equation*}
  In particular, $\ker\pi_d$ is identified with $\ker \big(\pi'_d\times\id_{S_0}\big)=\big(\ker\pi'_d\big)\times \{1\}$.
\end{lem}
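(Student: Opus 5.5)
We transport the splitting of the sequence \eqref{sn_ses} at level $d+1$ down to level $d$ using the diagram \eqref{kappa_diag}. Write $h_{d+1}=(r_{d+1},\varphi_{d+1})$, where $r_{d+1}\st S_{d+1}\to\kappa_{d+1}$ is the retraction, i.e.\ $r_{d+1}$ restricts to the identity on $\kappa_{d+1}$. The second component is $\varphi_{d+1}$ because $h_{d+1}$ comes from a splitting of \eqref{sn_ses}. Equivalently, there is a section $s_{d+1}\st S_0\to S_{d+1}$ with $\varphi_{d+1}\circ s_{d+1}=\id$. We define $h_d=(r_d,\varphi_d)$ with $r_d$ still to be constructed.

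Put $s_d:=\pi_d\circ s_{d+1}$. The diagram \eqref{kappa_diag} gives $\varphi_d\circ\pi_d=\varphi_{d+1}$. Hence $\varphi_d\circ s_d=\varphi_{d+1}\circ s_{d+1}=\id_{S_0}$, so $s_d$ is a section of \eqref{sn_ses} for $n=d$. Let $r_d\st S_d\to\kappa_d$ be the associated retraction $x\mapsto x\cdot s_d(\varphi_d(x))^{-1}$. Then $h_d:=(r_d,\varphi_d)$ is an isomorphism, by the standard splitting lemma for abelian groups.

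To check that the square commutes, take $y\in S_{d+1}$. The second coordinates agree, since $\varphi_d(\pi_d(y))=\varphi_{d+1}(y)$. For the first coordinates, note that the retraction satisfies $r_{d+1}(y)=y\cdot s_{d+1}(\varphi_{d+1}(y))^{-1}$; we arrange this by choosing $s_{d+1}$ to be the section determined by $h_{d+1}$, namely $s_{d+1}(t)=h_{d+1}^{-1}(1,t)$. Applying $\pi_d$ gives
\begin{equation*}
\pi'_d(r_{d+1}(y))=\pi_d(y)\cdot s_d(\varphi_{d+1}(y))^{-1}=\pi_d(y)\cdot s_d(\varphi_d(\pi_d(y)))^{-1}=r_d(\pi_d(y)).
\end{equation*}
This uses that $\pi'_d$ is the restriction of $\pi_d$ and that $\pi_d$ is a homomorphism. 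So $h_d\circ\pi_d=(\pi'_d\times\id_{S_0})\circ h_{d+1}$.

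For the final claim, $h_{d+1}$ and $h_d$ are isomorphisms and the square commutes, so $h_{d+1}$ maps $\ker\pi_d$ bijectively onto $\ker(\pi'_d\times\id_{S_0})=\ker\pi'_d\times\{1\}$.

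The main obstacle is bookkeeping rather than substance. One has to make sure the retraction $r_{d+1}$ built into $h_{d+1}$ coincides with the one induced by the section $s_{d+1}(t)=h_{d+1}^{-1}(1,t)$. If $h_{d+1}$ is only assumed to be some isomorphism whose composition with the projection to $S_0$ is $\varphi_{d+1}$ and which restricts to the identity on $\kappa_{d+1}$, then this follows by the direct verification $y=r_{d+1}(y)\cdot s_{d+1}(\varphi_{d+1}(y))$.
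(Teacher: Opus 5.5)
Your proof is correct, and it constructs $h_d$ exactly as the paper does, from the pushed-down section $s_d=\pi_d\circ s_{d+1}$. The difference is in how you verify that the square commutes.

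The paper sets $\alpha=h_d\circ\pi_d\circ h_{d+1}^{-1}$ and places everything in a cube-shaped diagram built from \eqref{kappa_diag} and the two splittings. It then applies the Cube Lemma twice, using surjectivity of $\id_{\kappa_{d+1}}$ and of $h_{d+1}$, to get commutativity of the two front faces, and reads off $\alpha=\pi'_d\times\id_{S_0}$ from that.

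You instead write the retractions explicitly. In additive notation your check is the one-line identity $\pi'_d\circ r_{d+1}=\pi_d-\pi_d\circ s_{d+1}\circ\varphi_{d+1}=\pi_d-s_d\circ\varphi_d\circ\pi_d=r_d\circ\pi_d$, which is shorter and entirely elementary.

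Your version also makes explicit a step the paper leaves implicit. Commutativity of the two front faces only determines $\alpha$ on $\kappa_{d+1}\times\{0\}$ together with its $S_0$-coordinate. So a priori $\alpha$ could differ from $\pi'_d\times\id_{S_0}$ by a homomorphism $S_0\to\kappa_d$; this really happens if $h_d$ is built from an unrelated section of the level-$d$ sequence. That extra component vanishes because $\alpha(0,t)=h_d(\pi_d(s_{d+1}(t)))=h_d(s_d(t))=(0,t)$. This is precisely the compatibility $s_d=\pi_d\circ s_{d+1}$ that your formula for $r_d$ uses.

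In exchange, the paper's route gives a purely diagrammatic presentation tied to \eqref{kappa_diag}, with no element-level bookkeeping.
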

\begin{proof}
  Consider the following diagram:
  \begin{equation*}
    \begin{tikzcd}[row sep=1.5em, column sep=1.5em]
      & \kappa_{d+1} \arrow[dl, equal] \arrow[rr] \arrow[dd]
      & & S_{d+1} \arrow[dl, "h_{d+1}", "\sim"'] \arrow[rr] \arrow[dd]
      & & S_0 \ar[dl, equal] \arrow[dd, equal] \\
      \kappa_{d+1} \arrow[rr, crossing over] \arrow[dd, "\pi'_d", pos=0.4]
      & & \kappa_{d+1}\times S_0 \arrow[rr, crossing over]
      & & S_0 \\
      & \kappa_d \arrow[dl, equal] \arrow[rr]
      & & S_d \arrow[dl, "h_d", "\sim"'] \arrow[rr]
      & & S_0 \ar[dl, equal] \\
      \kappa_d \arrow[rr]
      & & \kappa_d\times S_0 \ar[rr] \arrow[from=uu, crossing over, "\alpha", pos=0.4]
      & & S_0. \arrow[from=uu, equal, crossing over]
    \end{tikzcd}
  \end{equation*}

  It is obtained as follows:
  \begin{itemize}
    \item The two squares at the back are commutative and come from diagram \eqref{kappa_diag}.
    \item The two squares at the top are commutative and come from the splitting of \eqref{sn_ses} for $n=d+1$.
    \item The fact that the splitting of \eqref{sn_ses} for $n=d+1$ implies the splitting for $n=d$ follows from:
      if $\sigma_{d+1}\st S_0\to S_{d+1}$ is a right inverse of $\varphi_{d+1}$, then $\sigma_d\coloneq \pi_d\circ\sigma_{d+1}$ is a right inverse of $\varphi_d$.

      Thus we get the commutativity of the two squares at the bottom.
    \item The commutativity of the leftmost and rightmost side squares is obvious.
    \item We ensure the commutativity of the middle side square by defining the vertical arrow $\alpha\st\kappa_{d+1}\times S_0\to\kappa_d\times S_0$ to be whatever makes the square commute: this can be done because two of the other arrows in the same square are invertible.
  \end{itemize}
  The rest of this proof is dedicated to showing that $\alpha=\pi'_d\times \id_{S_0}$.
  This is a simple consequence of the commutativity of the front two squares, which we obtain from the Cube Lemma~\cite[Proposition 1.1]{mitchell} as follows.
  Consider the cube sitting on the left of the diagram.
  We have seen that all faces except the front one are commutative, and the map in the upper left corner $\id\st\kappa_{d+1}\to\kappa_{d+1}$ is surjective, hence by the Cube Lemma the front face is also commutative.

  We apply the same argument to the cube on the right of the diagram, as all the non-front faces are commutative, and the map in the upper left corner $h_{d+1}\st S_{d+1}\to\kappa_{d+1}\times S_0$ is an isomorphism, hence surjective.
\end{proof}
\begin{rem}
  The map $\pi'_d\st\kappa_{d+1}\to\kappa_d$ induces a map $\Ext^1(S_0,\kappa_{d+1})\to\Ext^1(S_0,\kappa_d)$;
  one can check that this takes the sequence \eqref{sn_ses} for $n=d+1$ to the one for $n=d$.
\end{rem}

\subsection{The case of two-torsion}

Motivated by the previous section, we specialise to the case $\ell=2$ and investigate the splitting of the short exact sequence
\begin{equation}\label{ses_two}
  1\to \kappa_d\to S_d\xrightarrow{\;\;\varphi_d\;\;} S_0\to 1.
\end{equation}

\begin{lem}\label{ses-card-split}
  If $d\geq 0$ is such that the sequence \eqref{ses_two} splits, then
  \begin{equation}\label{ses_card}
    \card{S_d[2]}=\card{S_0[2]}\cdot \card{\kappa_d[2]}.
  \end{equation}
  The converse is true if $\kappa_d$ or $\Cl(\oh_0)$ has no elements of order $4$.
\end{lem}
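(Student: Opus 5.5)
The plan is to treat the two directions separately, working throughout with the exact sequence \eqref{ses_two} and the sequence obtained from it by taking $2$-torsion. Since taking $2$-torsion is left exact, we get an exact sequence
\begin{equation*}
  1\to \kappa_d[2]\to S_d[2]\xrightarrow{\;\;\varphi_d\;\;} S_0[2].
\end{equation*}
Hence $\card{S_d[2]}=\card{\kappa_d[2]}\cdot\card{\varphi_d(S_d[2])}$. So \eqref{ses_card} holds if and only if $\varphi_d\st S_d[2]\to S_0[2]$ is surjective. Both directions will be phrased in terms of this surjectivity.

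\emph{Forward direction.} If \eqref{ses_two} splits, then $S_d\cong\kappa_d\times S_0$. Taking $2$-torsion commutes with finite products, so $S_d[2]\cong\kappa_d[2]\times S_0[2]$, and \eqref{ses_card} follows at once.

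\emph{Converse.} Assume \eqref{ses_card}, so every element of $S_0[2]$ lifts to an element of $S_d[2]$. It suffices to construct a homomorphic section $\sigma\st S_0\to S_d$ of $\varphi_d$. We split into the two cases allowed by the hypothesis.

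If $\Cl(\oh_0)$ has no elements of order $4$, then $S_0=S_0[2]$ is an elementary abelian $2$-group, that is, an $\FF_2$-vector space. Choose a basis $g_1,\dots,g_m$ of $S_0$ and lifts $x_i\in S_d[2]$ of the $g_i$. Then $x_i^2=1$, and $S_d$ is abelian. So sending $g_i\mapsto x_i$ extends to a homomorphism $\sigma\st S_0\to S_d$, and $\varphi_d\circ\sigma=\id$ holds on the basis, hence everywhere.

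If instead $\kappa_d$ has no elements of order $4$, then $\kappa_d$ has exponent dividing $2$. Write $S_0=\bigoplus_i \langle g_i\rangle$ with $g_i$ of order $2^{a_i}$, $a_i\geq 1$. The key step is to find a lift of each $g_i$ of order dividing $2^{a_i}$; this is the main point of the argument. Take an arbitrary lift $x_i$ of $g_i$. The element $g_i^{2^{a_i-1}}$ lies in $S_0[2]$, so by the surjectivity above it lifts to some $y_i\in S_d[2]$. Then $x_i^{2^{a_i-1}}$ and $y_i$ have the same image, so $x_i^{2^{a_i-1}}=y_ik_i$ for some $k_i\in\kappa_d$. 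Squaring gives
\begin{equation*}
  x_i^{2^{a_i}}=y_i^2k_i^2=1,
\end{equation*}
using $y_i^2=1$ and the fact that $\kappa_d$ has exponent $2$. Therefore $g_i\mapsto x_i$ defines a homomorphism $\sigma\st S_0\to S_d$ with $\varphi_d\circ\sigma=\id_{S_0}$, so \eqref{ses_two} splits.
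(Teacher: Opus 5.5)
Your proof is correct. The forward direction and the case where $\Cl(\oh_0)$ has no elements of order $4$ are the same as in the paper: reduce (\ref{ses_card}) to surjectivity of $S_d[2]\to S_0[2]$, then lift an $\FF_2$-basis of $S_0=S_0[2]$ into $S_d[2]$.

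You take a genuinely different route in the case $2\kappa_d=0$. You build a \emph{section} of $\varphi_d$: you decompose $S_0$ into cyclic factors and use $x_i^{2^{a_i-1}}=y_ik_i$ to show that each generator $g_i$ has a lift of the same order $2^{a_i}$. The paper instead builds a \emph{retraction} of the inclusion $\kappa_d\hookrightarrow S_d$. It first shows $\kappa_d\cap 2S_d=0$: if $a=2b$, lift $\varphi_d(b)\in S_0[2]$ to $c\in S_d[2]$, so that $a=2(b-c)\in 2\kappa_d=0$. Hence $\kappa_d$ embeds into the $\FF_2$-vector space $S_d/2S_d$, and projecting onto that subspace gives a left inverse.

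Both arguments use exactly the same two inputs, namely surjectivity of $S_d[2]\to S_0[2]$ and $2\kappa_d=0$, and essentially the same computation: subtract a $2$-torsion lift, land in $\kappa_d$, and multiply by $2$. The paper's version needs no cyclic decomposition of $S_0$. It also shows a neat duality between the two cases: an elementary abelian quotient yields a section, and an elementary abelian kernel yields a retraction. Your version treats both cases uniformly through sections. It also makes explicit the general criterion that the extension splits as soon as each cyclic generator of the quotient lifts to an element of the same order.
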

\begin{proof}
  One direction is obvious: if $S_d\cong S_0\times \kappa_d$ then $S_d[N]\cong S_0[N]\times \kappa_d[N]$ for all $N\geq 1$.

  In the other direction, applying the left-exact functor $\Hom(\ZZ/2\ZZ, -)$ to sequence \eqref{ses_two} gives
  \begin{equation}\label{ses_two_torsion}
    1\to \kappa_d[2]\to S_d[2]\xrightarrow{\;\;\varphi_d'\;\;} S_0[2] \to 1,
  \end{equation}
  where the surjectivity of $\varphi_d'$ came from \cref{ses_card}.

  If $\Cl(\oh_0)$ has no elements of order $4$ then $S_0[2]=S_0$.
  Viewing the groups in \eqref{ses_two_torsion} as $\FF_2$-vector spaces, there exists a right inverse of $\varphi_d'$, which also works as a right inverse of $\varphi_d$.

  Otherwise $\kappa_d$ has no elements of order $4$ and so $\kappa_d[2]=\kappa_d$.
  Consider the natural surjection $S_d\to S_d/2S_d$.
  Let $a\in\kappa_d\cap2S_d$, so $a=2b$ for some $b\in S_d$.
  Then $2\varphi_d(b) = 0$ and $\varphi_d(b)\in S_0[2]$.
  By the surjectivity of $\varphi_d'$ there exists $c\in S_d[2]$ such that $\varphi_d'(c)=\varphi_d'(b)$, hence $b-c\in\kappa_d$ and we have $2(b-c)=2b=a\in2\kappa_d$.
  But $\kappa_d[2]=\kappa_d$, so $a = 0$.
  Therefore, when restricted to $\kappa_d$, the natural surjection is injective.
  Since $S_d/2S_d$ is an $\FF_2$-vector space and $\kappa_d$ maps isomorphically onto a subspace, the composition with the projection $S_d \to S_d/2S_d\to \kappa_d$ gives a left inverse of the inclusion $\kappa_d\to S_d$.
\end{proof}

It is clear that we would benefit from having a good handle on the $2$-torsion in the class groups of imaginary quadratic fields.
Thanks to classical genus theory, this $2$-torsion can be described very precisely, as explained by Cox:
\begin{prop}[{\cite[Proposition 3.11]{cox}}]\label{two-torsion}
  Let $\oh$ be an imaginary quadratic order with discriminant $D$.
  Let $\eta$ be the number of odd prime divisors of $D$.
  Let
  \begin{equation*}
    \tau=\begin{cases}
      \eta-1 & \text{if }D\equiv 1\mod{4},                   \\
      \eta-1 & \text{if }D=4y\text{ and }y\equiv 1\mod{4},   \\
      \eta   & \text{if }D=4y\text{ and }y\equiv 2,3\mod{4}, \\
      \eta   & \text{if }D=4y\text{ and }y\equiv 4\mod{8},   \\
      \eta+1 & \text{if }D=4y\text{ and }y\equiv 0\mod{8}.
    \end{cases}
  \end{equation*}
  Then $\card{S[2]}=\card{\Cl(\oh)}[2]=2^{\tau}$, where $S$ is the Sylow $2$-subgroup of the class group $\Cl(\oh)$.
\end{prop}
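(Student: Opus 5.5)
The proof goes through Gauss's form-theoretic description of the class group and an explicit count of the classes of order dividing $2$. First, note that $S[2]=\Cl(\oh)[2]$ trivially, since every element of order dividing $2$ lies in the Sylow $2$-subgroup. So it suffices to show $\card{\Cl(\oh)[2]}=2^\tau$. I use the standard isomorphism between $\Cl(\oh)$ and the form class group $C(D)$ of primitive positive definite binary quadratic forms of discriminant $D$ (cf.\ \cite[Theorem 7.7]{cox}). Under this isomorphism the inverse of the class of $(a,b,c)$ is the class of $(a,-b,c)$. Hence a class has order dividing $2$ if and only if its reduced representative $(a,b,c)$ is properly equivalent to $(a,-b,c)$.

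The next step uses uniqueness of reduced forms. Recall that $(a,b,c)$ is reduced when $|b|\le a\le c$, with $b\ge 0$ if $|b|=a$ or $a=c$. Since each class contains exactly one reduced form, a reduced form is equivalent to its opposite exactly when $(a,-b,c)$ is itself reduced or reduces back to $(a,b,c)$. This happens precisely in the three ambiguous shapes
\begin{equation*}
  b=0,\qquad b=a,\qquad a=c.
\end{equation*}
So $\card{\Cl(\oh)[2]}$ equals the number of primitive reduced forms of discriminant $D$ of one of these shapes.

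I then count each shape separately. Each count amounts to counting factorisations of $D$, or of $-D/4$, into two coprime positive factors subject to an inequality.
\begin{itemize}
  \item For $b=0$ (only possible when $4\mid D$), we need $ac=-D/4$ with $a\le c$ and $\gcd(a,c)=1$.
  \item For $b=a$, we need $D=a^2-4ac$, i.e.\ $-D=a(4c-a)$ with $a\le c$ and $\gcd(a,c)=1$.
  \item For $a=c$, we need $-D=(2a-b)(2a+b)$ with $0\le b\le a$ and $\gcd(a,b)=1$.
\end{itemize}
Writing $-D$ (or $-D/4$) as a product of prime powers, coprimality forces each odd prime power to go entirely into one factor. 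This is what produces the $2^{\eta}$-type counts. The inequalities $a\le c$, respectively $b\le a$, then cut the count roughly in half, because each unordered coprime factorisation is counted once.

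The main obstacle is the bookkeeping at the prime $2$, which is where the five cases for $\tau$ come from. Depending on $D\bmod 4$, and for $D=4y$ on $y\bmod 8$, one must determine three things. First, which of the three shapes are available: for instance $b=0$ needs $D\equiv 0\bmod 4$, and $b=a$ or $a=c$ with $D$ odd forces $a$, resp.\ $b$, odd. Second, how the power of $2$ dividing $y$ may be distributed between the coprime factors; a factor $2^{\geq 3}$ in $y$ gives one extra independent choice, which is the source of $\eta+1$. Third, whether any boundary form is counted by two shapes at once, so that duplicates must be removed. I would first carry out the case $D\equiv 1\bmod 4$ completely, where only the shapes $b=a$ and $a=c$ occur and the count should give $2^{\eta-1}$. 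I would then treat $D=4y$ by splitting into the listed residue classes of $y$, checking primitivity $\gcd(a,b,c)=1$ in each subcase. As a cross-check, the result should agree with the genus-theory count $\card{C(D)/C(D)^2}=2^{\mu-1}$, where $\mu$ is the number of assigned characters, via $\card{G[2]}=\card{G/G^2}$ for finite abelian $G$.
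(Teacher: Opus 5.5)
Your plan is correct and is the argument behind Cox's Proposition 3.11, which the paper cites without proof. It passes to reduced forms, identifies the classes of order dividing $2$ as those of reduced forms with $b=0$, $b=a$ or $a=c$, and counts these via coprime factorisations, with the case analysis at the prime $2$ producing the five values of $\tau$.

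Two small corrections for the write-up. First, for reduced $(a,b,c)$, equivalence with $(a,-b,c)$ means that $(a,-b,c)$ either \emph{equals} $(a,b,c)$ (so $b=0$), or is non-reduced and reduces back to it; if it is reduced and different, the two classes are distinct. Second, the halving does not happen shape by shape. It happens only after combining the shapes $b=a$ and $a=c$: writing $-D=uv$, the conditions $a\le c$ and $0\le b\le a$ select $v\ge 3u$ and $u\le v\le 3u$ respectively, so together they cover each unordered factorisation once (the overlap $v=3u$ gives $(1,1,1)$).
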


\begin{lem}\label{ses-card-holds}
  Let $\ell=2$, let $\oh_0$ be an order of odd conductor $c_0$ in an imaginary quadratic field $K$, and let $d\geq 0$.
  \begin{enumerate}
    \item If $2$ is split or inert in $\oh_0$, then \cref{ses_card} holds for all $d\geq 0$.
    \item If $2$ is ramified in $\oh_0$ and $D_K\equiv 8\mod{16}$, then \cref{ses_card} holds for all $d\geq 0$.
    \item If $2$ is ramified in $\oh_0$ and $D_K\equiv 12\mod{16}$, then \cref{ses_card} holds only for $d=0$.
  \end{enumerate}
\end{lem}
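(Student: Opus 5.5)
Everything reduces to an explicit count of $2$-ranks via \cref{two-torsion}, applied to both sides of \cref{ses_card}. Let $D_0=c_0^2D_K$ be the discriminant of $\oh_0$. Then $\oh_d$ has discriminant $D_d=2^{2d}D_0$, and $c_0$ is odd, so $D_d$ and $D_0$ have the same odd prime divisors: the integer $\eta$ is the same for every $d$. Hence $\card{S_d[2]}=2^{\tau_d}$, where $\tau_d$ is determined by $\eta$ and the $2$-adic shape of $D_d$. On the other side, $\card{\kappa_d[2]}$ is read off from \cref{kappa-classification} and the final remark there. It is $1$ for $d\le1$ in the split/inert case and $2$ for $d\ge2$; it is $2$ for all $d\ge1$ when $D_K\equiv 8\pmod{16}$; and it is $4$ for $d\ge2$ and $2$ for $d=1$ when $D_K\equiv12\pmod{16}$. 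The case $d=0$ is trivial in every part, since $\kappa_0=1$, so assume $d\ge1$.

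For (1), $2$ is unramified in $\oh_0$, so $D_0\equiv 1\pmod 4$ and $D_0\equiv 1\pmod 8$ or $5\pmod 8$, giving $\tau_0=\eta-1$.
\begin{itemize}
\item For $d=1$, $D_1=4D_0$ with $y=D_0\equiv1\pmod4$, so $\tau_1=\eta-1=\tau_0$, consistent with $\kappa_1[2]=1$.
\item For $d=2$, $y=4D_0\equiv4\pmod8$, so $\tau_2=\eta=\tau_0+1$.
\item For $d\ge3$, $y=2^{2d-2}D_0\equiv0\pmod8$, so $\tau_d=\eta+1=\tau_0+2$.
\end{itemize}
This does not match $\card{\kappa_d[2]}=2$ for $d\ge3$. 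I recheck the $d\ge2$ entry of \cref{kappa-classification}: it says $\kappa_d\cong\ZZ/2^{d-2}\times\ZZ/2$, whose $2$-torsion has order $4$ once $d\ge3$ and order $2$ when $d=2$. With these values the counts agree: $\tau_d-\tau_0=0,1,2$ for $d=1$, $d=2$, $d\ge3$ respectively.

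For (2), $D_0=c_0^2D_K\equiv 8\pmod{16}$, since $c_0^2\equiv1\pmod8$.
\begin{itemize}
\item For $d=0$, $y=D_0/4\equiv2\pmod4$, so $\tau_0=\eta$.
\item For $d\ge1$, $y=2^{2d}D_0/4$ has $2$-adic valuation at least $3$, so $y\equiv0\pmod8$ and $\tau_d=\eta+1$.
\end{itemize}
This matches $\card{\kappa_d[2]}=2$, since $\kappa_d\cong\ZZ/2^d$.

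For (3), $D_0/4\equiv 3\pmod4$, giving $\tau_0=\eta$.
\begin{itemize}
\item For $d=1$, $y=4\cdot(D_0/4)\equiv 4\pmod8$ (note $D_0/4$ is odd), so $\tau_1=\eta=\tau_0$. But $\kappa_1\cong\ZZ/1\times\ZZ/2$ has $2$-torsion of order $2$, so \cref{ses_card} fails.
\item For $d\ge2$, $y\equiv0\pmod8$, so $\tau_d=\eta+1$. But $\card{\kappa_d[2]}=4$ (since $d-1\ge1$), so \cref{ses_card} again fails.
\end{itemize}
Thus \cref{ses_card} holds only for $d=0$.

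The main obstacle is bookkeeping rather than mathematics. The points needing care are the $d=1$ boundary cases, where the $2$-adic class of $y$ falls into a different row of \cref{two-torsion}, and reading $\card{\kappa_d[2]}$ correctly from the table, including the degenerate factor $\ZZ/2^{0}$. Throughout I will use that $c_0^2\equiv1\pmod 8$, so that $D_0$ and $D_K$ have the same residue modulo $16$ in parts (2) and (3).
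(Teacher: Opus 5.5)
Your proposal is correct and is the same argument as the paper's. You compute $\card{S_d[2]}$ from \cref{two-torsion}, with $\eta$ constant in $d$ because $c_0$ is odd, and read $\card{\kappa_d[2]}$ off \cref{kappa-classification}, case by case. The only blemish is your opening summary, which says $\card{\kappa_d[2]}=2$ for all $d\ge 2$ in the split/inert case. As you discover mid-argument, it is $2$ for $d=2$ and $4$ for $d\ge 3$, so correct that sentence in the write-up rather than leaving the self-correction in the text.
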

\begin{proof}
  The argument is based on a straightforward but tedious case-by-case analysis building on \cref{two-torsion}.

  Let $D_0$ be the discriminant of $\oh_0$, then the discriminant of $\oh_d$ is $D_d=2^{2d}D_0$, and $D_d$ has the same number $\eta$ of odd prime divisors for all $d\geq 0$.

  \begin{enumerate}
    \item If $2$ is split or inert in $\oh_0$, then $D_0\equiv 1$ or $5\mod{8}$.
      From \cref{two-torsion}, resp.\ \ref{kappa-class-split-inert-special} we get
      \begin{equation*}
        \card{S_d[2]} =
        \begin{cases}
          2^{\eta-1} & \text{if }d=0,1,   \\
          2^{\eta}   & \text{if }d=2,     \\
          2^{\eta+1} & \text{if }d\geq 3,
        \end{cases}
        \qquad\text{resp.}\qquad
        \card{\kappa_d[2]} =
        \begin{cases}
          2^0 & \text{if }d=0,1,   \\
          2^1 & \text{if }d=2,     \\
          2^2 & \text{if }d\geq 3,
        \end{cases}
      \end{equation*}
      giving \cref{ses_card} for all $d\geq 0$.
    \item Similarly, by \cref{two-torsion} and \ref{kappa-class-ramified-general} we have
      \begin{equation*}
        \card{S_d[2]} =
        \begin{cases}
          2^{\eta}   & \text{if }d=0,     \\
          2^{\eta+1} & \text{if }d\geq 1,
        \end{cases}
        \qquad\text{and}\qquad
        \card{\kappa_d[2]} =
        \begin{cases}
          2^0 & \text{if }d=0,     \\
          2^1 & \text{if }d\geq 1.
        \end{cases}
      \end{equation*}
      Again we obtain \cref{ses_card} for all $d\geq 0$.
    \item Here \cref{two-torsion} and \ref{kappa-class-ramified-special} give
      \begin{equation*}
        \card{S_d[2]} =
        \begin{cases}
          2^{\eta}   & \text{if }d=0,1,   \\
          2^{\eta+1} & \text{if }d\geq 2,
        \end{cases}
        \qquad\text{and}\qquad
        \card{\kappa_d[2]} =
        \begin{cases}
          2^0 & \text{if }d=0,     \\
          2^1 & \text{if }d=1,     \\
          2^2 & \text{if }d\geq 2.
        \end{cases}
      \end{equation*}
      We note that \cref{ses_card} holds only for $d=0$.
      \qedhere
  \end{enumerate}
\end{proof}

We obtain the following as an immediate consequence of \cref{ses-card-split,ses-card-holds}:
\begin{prop}\label{ses-splitting}
  Let $\ell=2$, let $\oh_0$ be an order of odd conductor $c_0$ in an imaginary quadratic field $K$, and let $d\geq 0$.
  \begin{enumerate}[label=(\alph*), ref=Proposition~\theprop(\alph*)]
    \item\label{ses-splitting-split-inert} Suppose $2$ is split or inert in $\oh_0$.
      If ($d\leq 3$) or ($d>3$ and $\Cl(\oh_0)$ has no elements of order $4$), then the sequence \eqref{ses_two} is split.
    \item\label{ses-splitting-ramified} Suppose $2$ is ramified in $\oh_0$ and $D_K\equiv 8\mod{16}$.
      If ($d\leq 1$) or ($d>1$ and $\Cl(\oh_0)$ has no elements of order $4$), then the sequence \eqref{ses_two} is split.
  \end{enumerate}
\end{prop}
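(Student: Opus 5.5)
The plan is to feed the cardinality identity of \cref{ses-card-holds} into the splitting criterion \cref{ses-card-split}. \cref{ses-card-split} says the sequence \eqref{ses_two} splits as soon as \cref{ses_card} holds and either $\kappa_d$ or $\Cl(\oh_0)$ has no elements of order $4$. \cref{ses-card-holds} supplies \cref{ses_card} for every $d\geq 0$ in both situations of the statement: part (1) covers $2$ split or inert in $\oh_0$, and part (2) covers $2$ ramified with $D_K\equiv 8\mod{16}$. So it remains to verify the ``no elements of order $4$'' hypothesis in each case.

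For part (a), assume $2$ is split or inert in $\oh_0$. I read off $\kappa_d$ from \cref{kappa-classification}.
\begin{itemize}
\item For $d=0$ we have $\kappa_0=1$.
\item For $d=1$ we have $\kappa_1\cong 0$ (the special case in (d)).
\item For $d\geq 2$ we have $\kappa_d\cong\ZZ/2^{d-2}\ZZ\times\ZZ/2\ZZ$ by \ref{kappa-class-split-inert-special}.
\end{itemize}
This group has no element of order $4$ exactly when $d-2\leq 1$, i.e.\ $d\leq 3$. So for $d\leq 3$ the hypothesis holds via $\kappa_d$. For $d>3$ we instead use the assumption that $\Cl(\oh_0)$ has no elements of order $4$. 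In either case \cref{ses-card-split} yields the splitting.

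For part (b), assume $2$ is ramified and $D_K\equiv 8\mod{16}$. By \ref{kappa-class-ramified-general} we have $\kappa_d\cong\ZZ/2^d\ZZ$ for $d\geq 1$, and $\kappa_0=1$. This group has no element of order $4$ precisely when $d\leq 1$. For $d>1$ we again fall back on the hypothesis on $\Cl(\oh_0)$, and \cref{ses-card-split} concludes.

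There is one point to check, which is the only potential obstacle. \cref{kappa-classification} is phrased via the splitting behaviour of $2$ in $\oh_K$, while the statement uses $\oh_0$. Since the conductor $c_0$ is odd, the decomposition of $2$ in $\oh_0$ and in $\oh_K$ agree, so the cases match. Beyond this the proof is a direct combination of the two lemmas.
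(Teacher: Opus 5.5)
Your proposal is correct and follows the paper's own route: the paper presents this proposition as an immediate consequence of \cref{ses-card-split} and \cref{ses-card-holds}. Your case check supplies the verification the paper leaves implicit: $\kappa_d$ has no element of order $4$ exactly for $d\leq 3$ in the split/inert case and for $d\leq 1$ when $2$ is ramified with $D_K\equiv 8\mod{16}$, and the hypothesis on $\Cl(\oh_0)$ covers the larger values of $d$.
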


\begin{rem}
  The conditions in \cref{ses-splitting} are necessary, as we can see from the following example:
  let $K=\QQ(\sqrt{-39})$ and take $\oh_0=\oh_K$.
  Then $\Cl(\oh_0)\cong\ZZ/4\ZZ$.
  Take $d=4$, so that $\kappa_4\cong\ZZ/4\ZZ\times\ZZ/2\ZZ$.
  One can check (for instance, using Magma \cite{magma}) that $\Cl(\oh_4)\cong \ZZ/8\ZZ\times\ZZ/2\ZZ\times\ZZ/2\ZZ$, so certainly the sequence does not split in this case.

\end{rem}

\subsection{The class number of $\QQ(\sqrt{-\ell})$ is not divisible by $\ell$}

Fix a prime $\ell$.

The aim of this section is to prove the statement in the title; this will be used in \cref{thm:converse_R1} to prove the non-existence of $k$-explosive primes for $(R_1,\ell,d)$ if $k$ is highly divisible by $\ell$.

From Ramar\'e's explicit bounds on values of Dirichlet $L$-functions in~\cite{ramare}, we can deduce the following explicit upper bounds on the class number:
\begin{lem}\label{class-num-upper-bound}
  Let $\oh_K$ be the ring of integers in an imaginary quadratic field $K$.
  Let $h$ be the class number of $\oh_K$ and $D_K$ its discriminant.
  Suppose that $D_K < -4$.
  Define
  \begin{equation*}
    \mathcal{C} =
    \begin{cases}
      2 & \text{if }D_K \equiv 1 \pmod{8}  \\
      4 & \text{if }D_K \equiv 0 \pmod{4}  \\
      6 & \text{if }D_K \equiv 5 \pmod{8}.
    \end{cases}
  \end{equation*}
  Then
  \begin{equation*}
    h \leq \frac{1}{\mathcal{C}\pi} \,\sqrt{|D_K|}\, \big(\log |D_K| + 4.2\big).
  \end{equation*}
\end{lem}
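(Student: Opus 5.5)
We derive the bound from the analytic class number formula together with Ramar\'e's explicit bound on $L(1,\chi)$. Let $\chi=\chi_{D_K}$ be the Kronecker character attached to $K$, a primitive real character of conductor $|D_K|$. Since $D_K<-4$, the unit group of $\oh_K$ is $\{\pm1\}$. Hence Dirichlet's class number formula reads
\begin{equation*}
  h=\frac{\sqrt{|D_K|}}{\pi}\,L(1,\chi).
\end{equation*}
It therefore suffices to show that
\begin{equation*}
  L(1,\chi)\leq \frac{1}{\mathcal{C}}\big(\log|D_K|+4.2\big).
\end{equation*}

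The key input is Ramar\'e's explicit upper bound from \cite{ramare}. For primitive real characters $\chi$ of conductor $f$, it has the shape $|L(1,\chi)|\leq \tfrac12\log f + c$, with refinements when $\chi(2)$ is prescribed. I will take the unconditional version $|L(1,\chi)|\leq \tfrac12(\log f+\kappa)$ for the case $2\mid f$, i.e.\ $D_K\equiv 0\pmod 4$. I will take the versions with a smaller leading constant when $\chi(2)=1$ or $\chi(2)=-1$, i.e.\ $D_K\equiv 1$ or $5\pmod 8$.

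These refinements arise from the Euler factor at $2$. Ramar\'e bounds the product $(1-\chi(2)/2)\,L(1,\chi)$ by $\tfrac14(\log f+\kappa')$, and one then divides by $1-\chi(2)/2$, which equals $1/2$ or $3/2$. This yields the constants $\tfrac12\cdot\tfrac12=\tfrac14$ when $\chi(2)=1$ and $\tfrac14\cdot\tfrac23=\tfrac16$ when $\chi(2)=-1$. These correspond exactly to $\mathcal{C}=2,4,6$ in the three residue cases, where $\chi(2)=1,0,-1$ respectively.

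The plan, in order, is as follows.
\begin{enumerate}
  \item Recall the class number formula, using $w=2$ for $D_K<-4$.
  \item Identify $\chi(2)$ from the residue of $D_K$ modulo $8$.
  \item Quote the appropriate Ramar\'e inequality in each of the three cases.
  \item Check that the additive constants appearing there are at most $4.2$ after normalisation, and combine.
\end{enumerate}

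The main obstacle is step 3: matching the precise form of Ramar\'e's theorems, including their constants and the conductor ranges where they hold, to the uniform constant $4.2$. If Ramar\'e's statements are only valid for $f$ beyond some threshold, I would verify the remaining small discriminants directly. This is a finite check, using tables of class numbers for $|D_K|$ below the threshold. I would also double-check the case $D_K\equiv 1\pmod 8$, where the Euler factor increases the bound, so that the claimed constant $\mathcal{C}=2$ indeed matches Ramar\'e's $\tfrac12\log f$ leading term with no further improvement needed.
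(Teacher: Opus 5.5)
Your route (class number formula with $w=2$, then Ramar\'e's explicit bound for the odd character $\chi=\left(\frac{D_K}{\cdot}\right)$) is the paper's route. As written, though, your case analysis does not produce the stated constants.

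For $D_K\equiv 0\pmod 4$ you propose a bound of the form $|L(1,\chi)|\le\frac12(\log f+\kappa)$. That only gives $h\le\frac{1}{2\pi}\sqrt{|D_K|}\,(\log|D_K|+\kappa)$, i.e.\ $\mathcal{C}=2$, which is a factor of $2$ weaker than the claimed $\mathcal{C}=4$. The next paragraph has an arithmetic slip. Dividing $\frac14$ by $1-\chi(2)/2=\frac12$ gives $\frac12$, not $\frac14$, so $\chi(2)=1$ yields $\mathcal{C}=2$, as your final sentence correctly anticipates. Taken literally, your plan assigns leading coefficients $\frac14,\frac12,\frac16$ to $\chi(2)=1,0,-1$, i.e.\ $\mathcal{C}=4,2,6$ rather than $2,4,6$. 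That overclaims (without justification) in the case $D_K\equiv 1\pmod 8$ and underdelivers in the case $D_K\equiv 0\pmod 4$.

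The fix is to use one inequality uniformly in all three cases, as the paper does. Ramar\'e's Corollary 2 for odd primitive characters gives
\begin{equation*}
  L(1,\chi)\le \frac{1}{4\,\lvert 1-\chi(2)/2\rvert}\big(\log|D_K|+5-2\log(3/2)\big),
\end{equation*}
and $4\lvert 1-\chi(2)/2\rvert$ equals $2,4,6$ for $\chi(2)=1,0,-1$. In particular, when $2\mid D_K$ the Euler factor is trivial and the leading coefficient $\frac14$ comes directly. There is no separate ``unconditional $\frac12$'' version to invoke. Since $5-2\log(3/2)\approx 4.19<4.2$, the bound follows immediately. The paper applies the corollary directly and does no finite check of small discriminants.
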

\begin{proof}
  Because of the assumption $D_K < -4$, the group of units $\oh_K^\times$ has cardinality $2$.
  Therefore Dirichlet's class number formula (see \cite[Equation (15) in Chapter 6]{davenport-mnt-2000}) gives
  \begin{equation*}
    h = \frac{\sqrt{|D_K|}}{\pi}\,L\left(1, \chi\right),
  \end{equation*}
  where $\chi=\big(\frac{D_K}{\cdot}\big)$ is the quadratic character associated with $K$.
  Note that, since $D_K<0$, $\chi$ is an odd character, that is $\chi(-1)=-1$.
  Therefore by \cite[Corollary 2]{ramare},
  \begin{equation*}
    L\left(1,\chi\right) \leq \frac{1}{4\left\lvert 1-\frac{\chi(2)}{2} \right\rvert}\, \big(\log|D_K| + 5-2\log(3/2)\big).
  \end{equation*}
  We conclude by noting that $5-2\log(3/2)<4.2$ and
  \begin{equation*}
    4\left\lvert 1 - \frac{\chi(2)}{2} \right\rvert =
    \begin{cases}
      2 & \text{if }D_K \equiv 1 \pmod{8}  \\
      4 & \text{if }D_K \equiv 0 \pmod{4}  \\
      6 & \text{if }D_K \equiv 5 \pmod{8}.
    \end{cases}
  \end{equation*}
  and
  \begin{equation*}
    5-2\log(3/2)<4.2.\qedhere
  \end{equation*}
\end{proof}
\begin{rem}
  The cases left out by \cref{class-num-upper-bound} are $K=\QQ(\sqrt{-1})$ and $K=\QQ(\sqrt{-3})$, both with class number $h=1$.
\end{rem}

\begin{cor}\label{ell-not-divide-class-num}
  If $\ell$ is prime, then it does not divide the class number $h$ of the ring of integers of $\QQ\left(\sqrt{-\ell}\right)$.
\end{cor}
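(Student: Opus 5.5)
The plan is to compare the explicit upper bound of \cref{class-num-upper-bound} with the trivial lower bound $h\geq \ell$ that divisibility would force, and to deal with small $\ell$ by hand.

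First, the small primes. For $\ell=2$ the field is $\QQ(\sqrt{-2})$, with $h=1$. For $\ell=3$ it is $\QQ(\sqrt{-3})$, with $h=1$. So we may assume $\ell\geq 5$, and then $D_K<-4$.

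Next, fix the discriminant. If $\ell\equiv 3\pmod 4$ then $D_K=-\ell$, and since $\ell\equiv 3$ or $7\pmod 8$ we have $D_K\equiv 5$ or $1\pmod 8$. If $\ell\equiv 1\pmod 4$ then $D_K=-4\ell$. In every case $\mathcal C\geq 2$, so \cref{class-num-upper-bound} gives
\begin{equation*}
  h\leq \frac{1}{2\pi}\sqrt{4\ell}\,\big(\log(4\ell)+4.2\big)=\frac{\sqrt{\ell}}{\pi}\big(\log(4\ell)+4.2\big).
\end{equation*}
Since $h\geq 1$, the condition $\ell\mid h$ would force $h\geq \ell$. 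This needs $\sqrt{\ell}\leq \big(\log(4\ell)+4.2\big)/\pi$, which fails once $\ell$ is moderately large. Taking $\ell=100$ for instance, the left side is $10$ and the right side is about $(6+4.2)/3.14\approx 3.25$. The difference of the two sides is increasing in $\ell$, so the inequality fails for all $\ell$ beyond an explicit threshold of roughly $\ell\approx 20$. Using the sharper constants $\mathcal C=4$ or $6$ where they apply lowers this threshold further.

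Finally, the finitely many primes below the threshold are checked directly, say $5\leq\ell\leq 23$, by listing class numbers of $\QQ(\sqrt{-\ell})$:
\begin{itemize}
  \item $\ell=5,13,17$, where $D_K=-20,-52,-68$, give $h=2,2,4$;
  \item $\ell=7,11,19,23$ give $h=1,1,1,3$.
\end{itemize}
None of these is divisible by $\ell$.

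The only real work is making the threshold explicit and checking the elementary inequality $\sqrt{\ell}>\big(\log(4\ell)+4.2\big)/\pi$ for $\ell$ past it, for example by checking one value and monotonicity via derivatives. The finite verification is then routine.
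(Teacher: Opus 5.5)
Your proof is correct and is essentially the paper's argument. You handle small $\ell$ by hand, then use \cref{class-num-upper-bound} with $\mathcal{C}\geq 2$ and $|D_K|\leq 4\ell$ to get $h\leq \frac{\sqrt{\ell}}{\pi}\big(\log(4\ell)+4.2\big)<\ell$ beyond a threshold, via one numerical value plus monotonicity. The inequality $\sqrt{\ell}>\big(\log(4\ell)+4.2\big)/\pi$ already holds at $\ell=7$, which is where the paper starts, so your table for $7\leq\ell\leq 23$ is not needed. However, your sample check at $\ell=100$ alone leaves $29\leq\ell<100$ uncovered, so the one-point check should be made at some $\ell\leq 29$ (for instance $\ell=7$).
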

\begin{proof}
  For $\ell = 2,3$, we know that $h = 1$; for $\ell=5$, we have $h=2$.

  For $\ell \geq 7$, we have $|D_K|\leq 4\ell$ so by \cref{class-num-upper-bound}
  \begin{equation*}
    h \leq \frac{1}{2\pi}\,\sqrt{4\ell}\,\big(\log(4\ell)+4.2\big)
    \leq \frac{1}{\pi}\,\sqrt{\ell}\,\big(\log(\ell)+5.6\big).
  \end{equation*}
  Letting
  \begin{equation*}
    g(x) = x-\frac{1}{\pi}\,\sqrt{x}\,\big(\log(x)+5.6\big),
  \end{equation*}
  we have
  \begin{equation*}
    g'(x) = \frac{\pi\sqrt{x}-2}{2x} > 0\quad\text{for }x>\frac{2}{\pi},
  \end{equation*}
  and $g(7)\approx 0.766>0$, so $g(x)>0$ for all $x\geq 7$, hence $h < \ell$ for $\ell\geq 7$.
\end{proof}

\section{A characterisation of solvability}\label{sect:solvability}

We first assemble some useful results about elliptic curves with complex multiplication.

Let $K$ be an imaginary quadratic field with maximal order $\oh_K$.

\begin{lem}
  Let $\oh$ be an order in $K$ and $p$ a prime number that is split in $K$ and coprime to the conductor of $\oh$.
  Let $H$ be the ring class field of $\oh$ and let $\qq$ be a prime ideal of $\oh_H$ that lies above $p$.
  Let $\pp = \qq \cap \oh_K$.
  Let $E$ be an elliptic curve over $\fpbar$ with complex multiplication by $\oh$.
  Then:
  \begin{enumerate}[label=(\alph*), ref=Lemma~\thelem(\alph*)]
    \item The $j$-invariant of $E$ lies in $\FF_{p^f}$, where $f$ is the residue degree of $\qq$.
    \item The residue degree $f$ of $\qq$ equals the order of the class $[\pp\cap\oh]$ in $\Cl(\oh)$, which equals the order of the Frobenius element $\sigma_\pp$ in $\Gal(H/K)$.
    \item\label{j_inv_residue_deg-divisibility_criterion} For any $k\geq 1$, we have $j(E)\in \FF_{p^k}$ if and only if the order of $\sigma_\pp$ in $\Gal(H/K)$ divides $k$.
  \end{enumerate}
\end{lem}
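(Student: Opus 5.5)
The plan is to lift $E$ to characteristic zero and use the standard reduction theory of CM elliptic curves (Deuring's lifting and reduction theorems). Let $\tilde E$ be an elliptic curve over $\CC$ with CM by $\oh$ whose reduction modulo $\qq$ is $E$. Deuring lifting gives such an $\tilde E$ because $p$ splits in $K$, so $E$ is ordinary with $\End(E)\cong\oh$ (the conductor being prime to $p$). By the first main theorem of CM, $j(\tilde E)$ generates $H$ over $K$ and is an algebraic integer, so $j(\tilde E)\in\oh_H$. Since $p$ is prime to the conductor of $\oh$, $p$ is unramified in $H$. Hence $j(E)=j(\tilde E)\bmod\qq$ lies in the residue field $\oh_H/\qq\cong\FF_{p^f}$, which gives (a). Possibly one must adjust which lift or which prime $\qq$ is used. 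Since $\Gal(H/K)$ acts transitively on the primes above $\pp$ and on the $j$-invariants of curves with CM by $\oh$, every curve with CM by $\oh$ is the reduction of some conjugate of $\tilde E$ modulo $\qq$. So it suffices to treat one fixed $\qq$.

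For (b): since $p$ splits in $K$, $\pp$ has residue degree $1$ over $p$, so $f$ equals the residue degree of $\qq$ over $\pp$ in the abelian unramified-at-$\pp$ extension $H/K$. That residue degree is the order of the Frobenius $\sigma_\pp$ in $\Gal(H/K)$. Under the Artin map $\Cl(\oh)\cong\Gal(H/K)$, $\sigma_\pp$ corresponds to $[\pp\cap\oh]$; here $\pp\cap\oh$ is a proper invertible $\oh$-ideal because $\pp$ is prime to the conductor (cf. \cite[Chapter 7, 9]{cox}). The two orders therefore coincide.

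For (c), the direction "order divides $k$ implies $j(E)\in\FF_{p^k}$" follows from (a) and (b), since $\FF_{p^f}\subseteq\FF_{p^k}$ when $f\mid k$. The converse is the main obstacle: we must show that $j(E)$ generates exactly $\FF_{p^f}$, not a smaller field. I would argue as follows. The Frobenius $\sigma_\pp$ acts on $j(\tilde E)$ and its conjugates via the Shimura reciprocity law $j(\tilde E)^{\sigma_\pp}=j(\mathfrak a^{-1}*\tilde E)$, with $\mathfrak a=\pp\cap\oh$. Reducing modulo $\qq$, the $p$-power Frobenius sends $j(E)$ to the $j$-invariant of $\mathfrak a^{-1}*E$. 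The reduction map from the $h(\oh)$ distinct $j$-invariants of curves with CM by $\oh$ to $\FF_{p^f}$ is injective. This holds because the reductions are ordinary with endomorphism ring $\oh$, and the Deuring correspondence is a bijection between these lifts and isomorphism classes over $\fpbar$. Also, the class group acts freely on these $j$-invariants. So $j(E)^{p^k}=j(E)$ iff $[\mathfrak a]^k=1$ iff $f\mid k$, using (b).

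Careful handling of injectivity of reduction and freeness of the action, which needs $j\neq 0,1728$ issues avoided via the action on curves rather than units, is where I expect the technical care to lie. Alternatively, cite \cite[Theorem 13.21/Ch.\ 14]{cox} and \cite{sutherland} for these facts directly.
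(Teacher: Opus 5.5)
For (a) and (b) you follow the paper's argument. You take a Deuring lift with $j(\Etilde)\in\oh_H$ and reduce it into $\oh_H/\qq\cong\FF_{p^f}$. You then use the decomposition group at $\qq$ to identify $f$ with the order of $\sigma_\pp$, and transport this to $[\pp\cap\oh]$ via the Artin map.

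The real difference is in (c). The paper only says that, because $j(E)\in\FF_{p^f}$, one has $j(E)\in\FF_{p^k}$ if and only if $f\mid k$. That justifies only the ``if'' direction, since a priori $j(E)$ could lie in a proper subfield of $\FF_{p^f}$. You correctly spot this and close it:
\begin{itemize}
\item $\sigma_\pp$ reduces to the $p$-power map modulo $\qq$;
\item Shimura reciprocity sends $j(\Etilde)$ to the $j$-invariant of $(\pp\cap\oh)^{-1}*\Etilde$;
\item reduction is injective on the $h(\oh)$ CM $j$-invariants, and $\Cl(\oh)$ acts freely on them;
\end{itemize}
so $j(E)^{p^k}=j(E)$ iff $[\pp\cap\oh]^k=1$. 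This is correct, granting the standard but nontrivial fact that reduction is a bijection from curves with CM by $\oh$ in characteristic $0$ to those over $\fpbar$, when $p$ splits in $K$ and is prime to the conductor.

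One small repair is needed. $\Gal(H/K)$ is transitive only on the primes above $\pp$, not on all primes above $p$. You need $E$ to be a reduction at the given $\qq$, since $\sigma_\pp$ is the Frobenius at primes above $\pp$. So either use $\Gal(H/\QQ)$, whose complex conjugation swaps primes above $\pp$ and $\overline{\pp}$, or work at a prime above $\overline{\pp}$ with $\sigma_{\overline{\pp}}=\sigma_\pp^{-1}$, which has the same order.

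There is also a lighter argument for the hard direction that avoids the injectivity input altogether. Take a model $E'$ of $E$ over $\FF_{p^k}$. Its Frobenius $\pi\in\End(E')\cong\oh$ has norm $p^k$ and trace prime to $p$, since $E$ is ordinary. Hence $\pi\oh_K\in\{\pp^k,\overline{\pp}^k\}$. By the ideal correspondence prime to the conductor, $\pi\oh=(\pp\cap\oh)^k$ or $(\overline{\pp}\cap\oh)^k$, and either way $[\pp\cap\oh]^k=1$.
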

\begin{proof}
  \
  \begin{enumerate}
    \item
      Write $\oh=\ZZ+c\oh_K=\ZZ+c\omega_K\ZZ$, with $p\nmid c$.
      Given an elliptic curve $E$ over $\fpbar$ as in the statement, let $\alpha\in\End(E)\cong\oh$ correspond to $c\omega_K\in\oh$.
      By Deuring's Lifting Theorem (see \cite[Theorem 14 in Chapter 13]{lang}), there exists an elliptic curve $\Etilde$ defined over a number field, an endomorphism $\alphatilde\in\End(\Etilde)$, and a prime ideal $\qq$ above $p$, such that the reduction of $(\Etilde,\alphatilde)$ modulo $\qq$ is isomorphic to $(E,\alpha)$.
      By \cite[Theorem 12 in Chapter 13]{lang} we have
      \begin{equation*}
        \End(\Etilde)\cong\End(E)\cong\oh.
      \end{equation*}

      Therefore the $j$-invariant $j(\Etilde)\in\oh_H$, by \cite[Theorems 5.2.3 and 6.1.2]{schertz}, so we conclude that
      \begin{equation*}
        j(E)\in \oh_H/\qq = \FF_\qq \cong \FF_{p^f}.
      \end{equation*}
    \item
      Consider the decomposition subgroup
      \begin{equation*}
        D_\qq = \{\sigma \in \Gal(H/K)\st \sigma(\qq)=\qq\}.
      \end{equation*}
      Let $\FF_\qq=\oh_H/\qq$ and $\FF_\pp=\oh_K/\pp$.
      Since $\pp$ is unramified, there is a group isomorphism (see, for instance, \cite[Proposition I.9.4, I.9.5]{neukirch})
      \begin{equation*}
        D_\qq \xrightarrow{\,\sim\,} \Gal(\FF_\qq/\FF_\pp)\cong \ZZ/f\ZZ,
      \end{equation*}
      where $f$ is the residue degree of $\qq$.

      Consider the Frobenius element $\sigma_\pp$ of $\Gal(H/K)$ at $\qq$ (as $H$ is an abelian extension, this depends only on the underlying prime $\pp$).
      Under the above isomorphism, it is sent to a generator of $\ZZ/f\ZZ$, so it must have order $f$.
      On the other hand, under the Artin isomorphism $\Gal(H/K)\cong\Cl(\oh)$, the Frobenius element corresponds to the class $[\pp\cap\oh]$ in $\Cl(\oh)$.
      Thus the order of $[\pp\cap\oh]$ is also $f$.
    \item
      By part (a), $j(E)\in\FF_{p^f}$, so $j(E)\in\FF_{p^k}$ if and only if $f\mid k$.
      The rest follows from (b).\qedhere
  \end{enumerate}
\end{proof}

Now, we can give a condition for a volcano to exist.

Given an order $\oh_0$ of conductor $c$ in the imaginary quadratic field $K$, a prime $\ell\ndivides c$, and an integer $d\geq 0$, we use the following notation:
$\oh_d$ denotes the order of conductor $c \ell^d$,
$H_d$ denotes the ring class field of $\oh_d$,
$G_d$ denotes the Galois group $\Gal(H_d / K)$ which is isomorphic to the ideal class group $\Cl(\oh_d)$,
and $\pi_d$ denotes the surjection $G_{d+1} \to G_d$.

\begin{thm}\label{thm:existence}
  Suppose the discriminant of $K$ is $<-4$.
  Let $V_0$ be the crater type corresponding to $\oh_0$ and $\ell$ as in \cref{craters_orders}.
  Fix $d\geq 0$ and consider the volcano $V=(V_0,\ell,d)$.
  Fix $k \geq 1$.
  Define the sets
  \begin{equation*}
    X = \big\{ \sigma\in G_{d+1} \st \ord(\sigma) \ndivides k \text{ and } \ord(\pi_d(\sigma)) \divides k \big\}
  \end{equation*}
  and
  \begin{equation*}
    \mathcal{P} = \big\{p\text{ prime}\st \text{$p\neq\ell$, $p$ split in $\oh_K$, and $V$ appears in $\gpk$ with crater order $\oh_0$}\big\}.
  \end{equation*}
  \begin{enumerate}
    \item If $\card{X}>0$ then $\mathcal{P}$ has positive density in the set of rational primes.
    \item If $X=\emptyset$ then $\mathcal{P}=\emptyset$.
  \end{enumerate}
\end{thm}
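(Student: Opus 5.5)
We reduce the appearance of $V$ in $\gpk$ to a statement about the Frobenius element $\sigma_\pp$ in the ring class fields, and then apply Chebotarev. Let $p\neq\ell$ be a prime split in $\oh_K$ and coprime to $c$, let $\pp$ be a prime of $\oh_K$ above $p$, and let $\sigma_\pp^{(d+1)}\in G_{d+1}$ be its Frobenius. By compatibility of Artin maps, its image under $\pi_d$ is $\sigma_\pp^{(d)}\in G_d$, and further projections give the Frobenius in each $G_i$, $i\le d$.

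The key claim is the following. For such a $p$, the volcano $V$ appears in $\gpk$ with crater order $\oh_0$ if and only if $\sigma^{(d+1)}_\pp\in X$.

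\emph{Appearance implies membership.} Suppose $V$ appears. Its vertices at level $d$ are $j$-invariants of curves with CM by $\oh_d$ lying in $\fpk$. By \ref{j_inv_residue_deg-divisibility_criterion}, $\ord(\sigma^{(d)}_\pp)\mid k$. The volcano has depth exactly $d$, so the Frobenius $\pi_E$ satisfies $\nu_\ell([\oh_K:\ZZ[\pi_E]])=d$ by Kohel's theorem. Hence curves with CM by $\oh_{d+1}$ are not defined over $\fpk$. Again by \ref{j_inv_residue_deg-divisibility_criterion}, $\ord(\sigma^{(d+1)}_\pp)\nmid k$.

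\emph{Membership implies appearance.} Suppose $\sigma_\pp^{(d+1)}\in X$. Then $\ord(\sigma^{(i)}_\pp)\mid k$ for all $i\le d$, since the order divides that at level $d$. So all $j$-invariants with CM by $\oh_0,\dots,\oh_d$ lie in $\fpk$. These are ordinary, because $p$ splits in $K$. They avoid $0$ and $1728$, because $D_K<-4$ means no order in $K$ has extra units.

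Take one such curve $E$ with CM by $\oh_d$, chosen via the appropriate twist. Its Frobenius $\pi_E\in\oh_d$ generates an order $\ZZ[\pi_E]$ whose $\ell$-adic index is at least $d$. That index is exactly $d$: otherwise $\pi_E\in\oh_{d+1}$, so the multiplication by $\oh_{d+1}$ would be defined over $\fpk$ and the $\oh_{d+1}$-curves would have $j$-invariants in $\fpk$, contradicting $\ord(\sigma^{(d+1)}_\pp)\nmid k$.

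More carefully, one should argue via the $\ell$-isogenous curves. A curve with CM by $\oh_{d+1}$ is $\ell$-isogenous to $E$. If $\ZZ[\pi_E]\subseteq\oh_{d+1}$, that descending isogeny, and hence its target, would be rational over $\fpk$. Kohel's theorem then shows that the component of $E$ is a volcano of depth $d$ with crater order $\oh_0$, whose crater type is the given $V_0$ by \cref{craters_orders}. So $V$ appears.

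With the claim in hand, both conclusions follow from the Chebotarev density theorem applied to the Galois extension $H_{d+1}/\QQ$. The set $X$ is a union of conjugacy classes for $\Gal(H_{d+1}/\QQ)$: complex conjugation acts on $G_{d+1}$ by inversion, which preserves orders and commutes with $\pi_d$. A rational prime $p$ that splits in $K$ and is unramified in $H_{d+1}$ has its Frobenius lying in $G_{d+1}$, and that Frobenius is $\sigma^{(d+1)}_\pp$.

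If $X\neq\emptyset$, Chebotarev gives positive density $\card{X}/(2\card{G_{d+1}})$ to the primes with Frobenius in $X$. Excluding the finitely many ramified primes and $\ell$ does not change this density, so $\mathcal{P}$ has positive density. If $X=\emptyset$, the claim immediately gives $\mathcal{P}=\emptyset$. The only primes not covered by the claim are those dividing $c\ell$, which cannot appear with crater order $\oh_0$ anyway.

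The main obstacle is the second direction of the claim: showing that the depth is \emph{exactly} $d$, i.e. that $\ZZ[\pi_E]$ has $\ell$-index exactly $d$ and not more. This needs a careful relation between the field of definition of the $j$-invariants of $\oh_{d+1}$-curves and the containment $\pi_E\in\oh_{d+1}$ for some twist. I would handle it through the Deuring lifting and reduction of Frobenius as in the preceding lemma, using that $\pi_E$ corresponds to a generator of $(\pp\cap\oh)^{k}$ when $\ord\mid k$.
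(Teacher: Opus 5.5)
Your proposal is correct and follows the paper's proof: the criterion for $j(E)\in\fpk$ turns ``$V$ appears with crater order $\oh_0$'' into ``$\sigma_\pp\in X$'', and Chebotarev finishes. Applying Chebotarev to $H_{d+1}/\QQ$, using that complex conjugation acts on $G_{d+1}$ by inversion and so preserves $X$, is exactly the variant noted in the remark after the paper's proof, and it gives the density $\card{X}/(2\card{G_{d+1}})$ among rational primes directly; the step you flag as the main obstacle is already settled by your own Kohel argument (depth exceeding $d$ would put vertices with CM by $\oh_{d+1}$ into $\gpk$, contradicting $\ord(\sigma^{(d+1)}_\pp)\ndivides k$), and the paper asserts this equivalence with no more detail than you give.
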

\begin{proof}
  Suppose $p\neq \ell$ is a prime that splits in $\oh_K$: $p \oh_K = \pp \overline{\pp}$, $\pp \neq \overline{\pp}$.
  The existence of the volcano $V$ over $\fpk$ with crater order $\oh_0$ is equivalent to having an elliptic curve over $\fpk$ with CM by $\oh_d$ and \emph{no} elliptic curves over $\fpk$ with CM by $\oh_{d+1}$.
  By \ref{j_inv_residue_deg-divisibility_criterion}, this is equivalent to $\sigma_\pp\in G_d$ having order dividing $k$ and $\sigma'_{\pp}\in G_{d+1}$ having order \emph{not} dividing $k$.

  By definition, the natural surjection $\pi_d\st G_{d+1} \to G_d$ maps $\sigma_\pp'$ to $\sigma_\pp$.
  Therefore the existence of the volcano $V$ over $\FF_{p^k}$ is equivalent to $\sigma_\pp'\in X\subseteq G_{d+1}$, where $X$ is the set defined in the statement.

  The set $X$ is trivially stable under conjugation, since $G_{d+1}$ is an abelian group.
  The Chebotarev density theorem tells us that the set
  \begin{equation*}
    S = \big\{\pp\text{ prime of }K\st \pp \text{ is unramified in }H_{d+1}\text{ and }\sigma_\pp'\in X\big\}
  \end{equation*}
  has density
  \begin{equation}\label{eq:galsub}
    \frac{\card{X}}{\card{G_{d+1}}} = \frac{\card{X}}{h(\oh_{d+1})}.
  \end{equation}

  Therefore for a prime $p$ to exist (and hence infinitely many such primes), we must have $\card{X} \neq 0$.
\end{proof}

\begin{rem}
  We could have done this all with $H_{d+1}$ an extension of $\QQ$ instead of $K$ --- that way the set of primes would simply be the ones in $\ZZ^+$.
  We would have then ended up with an extra factor of $\frac{1}{2}$ in \cref{eq:galsub}, since that is the density of split primes.
  But it will be easier to work with $\Gal(H_{d+1}/K)$ as it is isomorphic to $\Cl(\oh_{d+1})$.
\end{rem}

Recall the concept of compatible order from \cref{compatible_order}.

\begin{cor}
  Let $V=(V_0,\ell,d)$ be volcano and let $k\geq 1$.
  \begin{enumerate}[label=(\alph*), ref=Corollary~\thecor(\alph*)]
    \item\label{cor:existence} Suppose there exists an order $\oh_0$ that is compatible with $(V_0,\ell)$ and such that the set $X$ defined in \cref{thm:existence} is non-empty.
      Then $V$ has a positive-density set of \explosive\ primes.

    \item\label{cor:non-existence} Suppose that for all orders $\oh_0$ that are compatible with $(V_0,\ell)$, the set $X$ defined in \cref{thm:existence} is empty.
      Then $V$ has no \explosive\ primes.
  \end{enumerate}
\end{cor}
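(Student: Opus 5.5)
This corollary should follow directly from \cref{thm:existence}. The plan is to show that every \explosive\ prime for $V$ is counted by the set $\mathcal{P}$ attached to some compatible order, and conversely that every prime in such a $\mathcal{P}$ is \explosive.

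For part (a), fix an order $\oh_0$ compatible with $(V_0,\ell)$ whose set $X$ is non-empty. First we dispose of the hypothesis $D_K<-4$. If $D_K\in\{-3,-4\}$, we either check that the relevant argument still works, or we note that the crater vertices with $j\in\{0,1728\}$ are excluded from $\gpk$. The cleanest route is probably to observe that \cref{thm:existence} only requires $D_K<-4$ to avoid extra units, and to reduce to that case. In the corollary as stated, I would simply invoke \cref{thm:existence}(1), implicitly for fields with $D_K<-4$, and remark on the two exceptional fields separately.

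Applying \cref{thm:existence}(1), the set $\mathcal{P}$ of primes $p\neq\ell$, split in $\oh_K$, for which $V$ appears in $\gpk$ with crater order $\oh_0$, has positive density. Each such $p$ is by definition \explosive\ for $V$. Compatibility guarantees, via \cref{craters_orders}, that the volcano produced really has crater type $V_0$, so it is isomorphic to $V$. This gives a positive-density set of \explosive\ primes.

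For part (b), suppose $p$ is \explosive, so some component of $\gpk$ is isomorphic to $V$. By Kohel's theorem this component has a crater whose vertices have CM by an order $\oh_0$ of conductor prime to $\ell$. By \cref{craters_orders}, its crater type is determined by the decomposition of $\ell$ in $\oh_0$, and it equals $V_0$, so $\oh_0$ is compatible with $(V_0,\ell)$.

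Next we must check that $p$ splits in $\oh_K$. This holds because the curves are ordinary, so the Frobenius $\pi$ has norm $p^k$ with $p\nmid\Tr\pi$, and $p$ therefore splits in $K=\QQ(\pi)$. Hence $p\in\mathcal{P}$ for this $\oh_0$. But by hypothesis the set $X$ for $\oh_0$ is empty, so \cref{thm:existence}(2) gives $\mathcal{P}=\emptyset$, a contradiction.

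The main obstacle is the identification step: making sure that "appears with crater order $\oh_0$" in \cref{thm:existence} matches exactly the Kohel data of the component, including the depth $d$ being the full depth of the component. It must also cover the fields $\QQ(\sqrt{-1})$ and $\QQ(\sqrt{-3})$, which the theorem excludes. For those fields I would argue directly that $j=0,1728$ are removed from the vertex set, so they need separate treatment or can be excluded.
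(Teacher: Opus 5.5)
Your proposal is correct and matches the paper, which gives no separate proof and treats both parts as immediate from \cref{thm:existence}: for (a) apply it to the given compatible order, and for (b) observe, as you do, that any copy of $V$ in $\gpk$ has a crater order $\oh_0$ compatible with $(V_0,\ell)$ and that $p$ splits in $K$ by ordinarity. The paper handles $\QQ(\sqrt{-1})$ and $\QQ(\sqrt{-3})$ only implicitly, through the standing hypothesis $D_K<-4$ and the removal of $j=0,1728$ from $\gpk$ (compare the proof of \cref{thm:converse_R1}). If you want to settle this rather than hedge, note that non-maximal orders in those fields have unit group $\{\pm1\}$, their volcanoes never meet $j=0,1728$, and the proof of \cref{thm:existence} applies to them verbatim, so the only genuinely exceptional crater orders are $\ZZ[i]$ and $\ZZ[\zeta_3]$ themselves.
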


\begin{lem}
  When $k = 1$, $\card{X} = 0$ if and only if $\ell = 2$ splits in $\oh_K$ and $d = 0$.
\end{lem}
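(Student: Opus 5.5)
For $k=1$ the condition $\ord(\sigma)\divides 1$ just says $\sigma=1$, so the plan is to identify $X$ explicitly and reduce the claim to a class number computation. Concretely,
\begin{equation*}
  X=\{\sigma\in G_{d+1}\st \sigma\neq 1,\ \pi_d(\sigma)=1\}=\ker(\pi_d)\setminus\{1\}.
\end{equation*}
Hence $\card{X}=0$ if and only if $\pi_d\st G_{d+1}\to G_d$ is injective. Since $\pi_d$ is surjective, this is equivalent to $h(\oh_{d+1})=h(\oh_d)$. The whole statement therefore reduces to computing the ratio $h(\oh_{d+1})/h(\oh_d)$.

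For this ratio I will use the standard formula for class numbers of orders \cite[Theorem 7.24]{cox}, applied to $\oh_d\supseteq\oh_{d+1}$. This is the formula underlying the sequences \eqref{eq:ses} and \eqref{eq:kd}. Since $[\oh_d:\oh_{d+1}]=\ell$, the formula gives
\begin{equation*}
  \frac{h(\oh_{d+1})}{h(\oh_d)}=\frac{1}{[\oh_d^\times:\oh_{d+1}^\times]}\cdot
  \begin{cases}
    \ell-\left(\frac{D_0}{\ell}\right) & \text{if }d=0,\\
    \ell & \text{if }d\geq 1.
  \end{cases}
\end{equation*}
The Euler-factor term appears only for $d=0$, because for $d\geq 1$ the prime $\ell$ already divides the conductor $c\ell^d$ of $\oh_d$. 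This is cleanest to derive by writing both $h(\oh_d)$ and $h(\oh_{d+1})$ via the formula relative to $\oh_K$ and dividing.

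The unit index is $1$. The hypothesis $D_K<-4$ from \cref{thm:existence} gives $\oh_K^\times=\{\pm1\}$, so every order in $K$ has unit group $\{\pm 1\}$.

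The ratio can then be read off case by case. For $d\geq1$ it equals $\ell>1$, so $X\neq\emptyset$; this agrees with \cref{kerpid}, which gives $\ker\pi'_d\cong\ZZ/\ell\ZZ$. For $d=0$ it equals $\ell-\left(\frac{D_0}{\ell}\right)$. Because $\ell\ndivides c$, this symbol coincides with $\left(\frac{D_K}{\ell}\right)$, interpreted as the Kronecker symbol when $\ell=2$; that is, it records whether $\ell$ is split ($1$), inert ($-1$) or ramified ($0$) in $\oh_K$. The value $\ell-\left(\frac{D_K}{\ell}\right)$ is at least $\ell-1\geq 1$, with equality exactly when $\ell=2$ and $2$ splits. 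Thus $\card{X}=0$ precisely when $\ell=2$ splits in $\oh_K$ and $d=0$.

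The main point needing care is the $d=0$ case. There \cref{kerpid} is insufficient: it only controls the Sylow $\ell$-part of $\ker\pi_0$, whereas the kernel can have non-$\ell$ part, of order $\ell+1$ in the inert case. So one must use the full class number formula rather than the $\ell$-primary results of \cref{sect:orders}. One must also check the translation between $\left(\frac{D_0}{\ell}\right)$ and the splitting of $\ell$ in $\oh_K$ (equivalently in $\oh_0$, since the conductor is coprime to $\ell$), including the $\ell=2$ convention.
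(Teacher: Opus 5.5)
Your proposal is correct and is essentially the paper's proof: the paper also notes that for $k=1$ one has $\card{X}=\card{\ker\pi_d}-1$, and then uses the class number formula to get $\card{\ker\pi_d}=\ell-\left(\frac{D_K}{\ell}\right)$ for $d=0$ and $\card{\ker\pi_d}=\ell$ for $d\geq 1$. You simply fill in details the paper leaves implicit, namely that the unit index is $1$ because $D_K<-4$, and that $\left(\frac{D_0}{\ell}\right)=\left(\frac{D_K}{\ell}\right)$ because $\ell\nmid c$.
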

\begin{proof}
  In the case of $k = 1$, $\card{X} = \card{\ker\pi_d} - 1 = \card{\Gal(H_{d+1}/H_d)} - 1$.
  By applying the class number formula it is easy to see that $\card{\Gal(H_{d+1}/H_d)}=\ell - (D_K/\ell)$ when $d = 0$ and $\ell$ otherwise.
\end{proof}
\begin{rem}
  This result, combined with \cref{thm:existence}, recovers \cite[Proposition 4.3]{BCP}.
\end{rem}

In general, the set $X$ in \cref{thm:existence} is tedious to calculate explicitly.
The following criterion, which we state for arbitrary abelian groups (written additively), can assist in deciding whether $X$ is empty or not.

\begin{lem}\label{lem:xequiv}
  Let $k\geq 1$ and let $\pi\st G\to H$ be a homomorphism of finite abelian groups.
  Consider the set
  \begin{equation*}
    X = \{g\in G \st \ord(g) \ndivides k\text{ and }\ord(\pi(g)) \divides k\}.
  \end{equation*}
  \begin{enumerate}[label=(\alph*), ref=Lemma~\thelem(\alph*)]
    \item $X\neq\emptyset$ if and only if $\ker\pi \cap k G\neq \{0\}$.
    \item\label{lem:xequiv-sylow-hated-by-max} Suppose $\ell$ is a prime such that $\ker\pi$ is an $\ell$-group.
      Let $G'$, resp.\ $H'$ be the Sylow $\ell$-subgroup of $G$, resp.\ $H$.
      Let $\pi'\st G'\to H'$ be the restriction of $\pi$ to $G'$, and consider the set
      \begin{equation*}
        X' = \{s\in G'\st \ord(s) \ndivides k\text{ and }\ord(\pi'(s)) \divides k\}.
      \end{equation*}
      Then
      \begin{equation*}
        X\neq\emptyset\quad\Leftrightarrow\quad X'\neq\emptyset\quad\Leftrightarrow\quad \ker\pi'\cap \ell^r G'\neq \{0\},
      \end{equation*}
      where $r=\nu_\ell(k)$.
    \item\label{lem:xequiv-enjoyed-by-max} Let $f\st G\to H$ be a surjective homomorphism of finite abelian $\ell$-groups such that $\ker f$ is cyclic of order $\ell$.
      Then $X\neq\emptyset$ if and only if $\card{G/\ell^r G}=\card{H/\ell^r H}$.
  \end{enumerate}
\end{lem}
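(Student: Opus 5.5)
We prove the three parts in order, each building on the previous one.

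\emph{Part (a).} Suppose $g\in X$. Since $\ord(\pi(g))\mid k$, we have $\pi(kg)=k\pi(g)=0$, so $kg\in\ker\pi$. Since $\ord(g)\nmid k$, we have $kg\neq 0$. Hence $kg$ is a nonzero element of $\ker\pi\cap kG$. Conversely, take a nonzero $x=kg\in\ker\pi\cap kG$. Then $kg\neq0$ gives $\ord(g)\nmid k$, and $k\pi(g)=\pi(x)=0$ gives $\ord(\pi(g))\mid k$. So $g\in X$.

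\emph{Part (b).} By (a), $X\neq\emptyset$ if and only if $\ker\pi\cap kG\neq\{0\}$. Write $k=\ell^r m$ with $\ell\nmid m$, and decompose $G=G'\oplus G''$ with $G''$ the prime-to-$\ell$ part. Then $kG=\ell^r mG'\oplus kG''=\ell^rG'\oplus kG''$, because multiplication by $m$ is an automorphism of $G'$. The kernel $\ker\pi$ is an $\ell$-group, so it lies in $G'$. Therefore $\ker\pi\cap kG=\ker\pi\cap\ell^rG'$. Moreover $\ker\pi'=\ker\pi\cap G'=\ker\pi$. Applying (a) to $\pi'\colon G'\to H'$ with the same $k$ gives $X'\neq\emptyset$ if and only if $\ker\pi'\cap kG'\neq\{0\}$, and $kG'=\ell^rG'$. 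This yields all three equivalences.

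\emph{Part (c).} Here $G$ and $H$ are $\ell$-groups, and $r=\nu_\ell(k)$ as before. By (b), $X\neq\emptyset$ if and only if $\ker f\cap\ell^rG\neq\{0\}$. Since $\ker f$ has prime order $\ell$, this intersection is nonzero exactly when $\ker f\subseteq\ell^rG$. Because $f$ is surjective, $f(\ell^rG)=\ell^rH$, so $f$ induces a surjection $G/\ell^rG\to H/\ell^rH$. The kernel of this induced map is $(\ker f+\ell^rG)/\ell^rG$. This kernel is trivial if and only if $\ker f\subseteq\ell^rG$, and for finite groups that is equivalent to $\card{G/\ell^rG}=\card{H/\ell^rH}$.

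The main subtlety, such as it is, lies in (b): one must check carefully that passing to the Sylow subgroups preserves both the kernel and the relevant power-multiple subgroup. The rest is formal.
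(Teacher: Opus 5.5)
Your proof is correct and follows the paper's argument in all three parts. Part (a) is the same unwinding of the definitions. Part (c) uses the same induced surjection $G/\ell^rG\to H/\ell^rH$ with kernel $(\ker f+\ell^rG)/\ell^rG$; you conclude with the observation that a surjection of finite groups is injective exactly when the orders agree, where the paper instead counts via the second isomorphism theorem. In (b) you show $\ker\pi\cap kG=\ker\pi'\cap\ell^rG'$ directly for subgroups, rather than projecting an element of $X$ onto its $\ell$-primary component as the paper does; this rests on the same decomposition $G=G'\oplus G''$ with $\ker\pi\subseteq G'$, and is a slightly cleaner packaging.
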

\begin{proof}
  \
  \begin{enumerate}
    \item
      For $g\in G$, $\ord(g) \ndivides k$ simply means $k g \neq 0$.
      And $\ord(\pi(g)) \divides k$ means $\pi(k g) = 0$, that is $k g\in\ker\pi$.
      Combining these gives the set $\left(\ker\pi \cap k G\right) \setminus \{ 0\}$.
    \item
      It is clear that $X'\subseteq X$, so that $X'\neq\emptyset\Rightarrow X\neq\emptyset$.

      Now let $g\in X$, so that $k g\neq 0$ and $\pi(k g)=0$.
      Since $G'$ is the Sylow $\ell$-subgroup of the abelian group $G$, we have $G=G'\times A$, where $A$ has order coprime to $\ell$ and $G'\cap A=\{1\}$.
      Write $g=s+a$ with $s\in G'$ and $a\in A$.
      Then
      \begin{equation*}
        0=\pi(k g)=\pi(k s)+\pi(k a),
      \end{equation*}
      so $\pi(k s)$ is the inverse of $\pi(k a)$.
      But the order of $\pi(k s)$ is a power of $\ell$, while the order of $\pi(k a)$ is coprime to $\ell$, so we must have $\pi(k s)=\pi(k a)=0$.
      Hence $k a\in\ker\pi\subseteq G'$ as $\ker\pi$ is an $\ell$-group.
      Since $G'\cap A=\{0\}$ we conclude that $k a=0$, so $k g=k s$.
      We have $s\in G'$, $k s\neq 0$ and $k \pi(s)=0$, therefore $s\in X'$.

      Applying part (a) to $\pi'\st G'\to H'$ and $X'$, we have that $X'\neq\emptyset \Leftrightarrow \ker\pi'\cap k G'\neq \{0\}$.

      But $G'$ is a finite abelian group of $\ell$-power order, so if $\ell\ndivides m$ then the $m$-multiplication map $G'\to m G'$ is an injective group homomorphism, hence $m G=G'$.
      This implies that $k G'=\ell^r G'$.
    \item
      It follows from part (a) and (b) that $X\neq\emptyset\Leftrightarrow\ker f\cap \ell^r G\neq\{0\}$.
      Since $\ker f\cong\ZZ/\ell\ZZ$, we have that $\ker f\cap \ell^r G\neq\{0\}\Leftrightarrow\ker f\subseteq \ell^r G$.

      Now consider the surjective map $\bar{f}\st G/\ell^r G\to H/\ell^r H$ given by $g + \ell^r G\mapsto f(g) + \ell^r H$.
      We can identify the kernel with $\Lambda\coloneq(\ker(f) + \ell^r G)/\ell^r G$.
      If $g + \ell^r G\in\ker\bar{f}$, $g\in G$ then $f(g)\in \ell^r H$, and so $f(g) = \ell^r h$ for some $h\in H$.
      Let $x\in G$ be a pre-image of $h$ so that $f(g) = \ell^r f(x) = f(\ell^r x)$.
      Thus $f(g - \ell^r x) = 0\in\ker f$ and $g + \ell^r G=(g - \ell^r x) + (\ell^r x) + \ell^r G\in\Lambda$.
      Conversely, let $k + \ell^r g + \ell^r G\in\Lambda$ where $k\in\ker f$, $g\in G$.
      Then $k + \ell^r g + \ell^r G=k + \ell^r G\in\ker\bar{f}$ as $\bar{f}(k + \ell^r G) = f(k) + \ell^r H = \ell^r H$.

      By the second isomorphism theorem, $\ker\bar{f}=\Lambda\cong\ker f/(\ker f\cap \ell^r G)$.
      On the other hand, applying the first isomorphism theorem gives $(G/\ell^r G)/\ker\bar{f}\cong H/\ell^r H$.
      Taking orders we have that
      \begin{equation*}
        \card{G/\ell^r G}\cdot\card{(\ker f\cap \ell^r G)} = \card{\ker f}\cdot\card{H/\ell^r H}.
      \end{equation*}
      The result follows from the equivalence $\ker f\subseteq \ell^r G\Leftrightarrow\card{(\ker f\cap \ell^r G)}= \card{\ker f}$.
      \qedhere
  \end{enumerate}
\end{proof}

We return to the setting of \cref{thm:existence}.
For any $d\geq 0$, we let $S_d$ be the Sylow $\ell$-subgroup of $G_d=\Cl(\oh_d)$.
We let $\pi_d$ be the surjection $S_{d+1}\to S_d$, we let $\kappa_d$ be the kernel of the surjection $S_d\to S_0$, and we let $\pi'_d\st\kappa_{d+1}\to\kappa_d$ be the restriction of $\pi_d$.
By \cref{kerpid}, $\ker\pi'_d$ is an $\ell$-group, of order $\ell$ if $d\geq 1$.

By \ref{lem:xequiv-sylow-hated-by-max} we have
\begin{cor}\label{cor:kappa_and_x}
  Let $d>0$, $k\geq 1$, $r=\nu_\ell(k)$.
  If $\ker\pi'_d\cap \ell^r\kappa_{d+1}\neq\{0\}$, then $X\neq\emptyset$.
\end{cor}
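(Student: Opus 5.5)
The plan is to reduce to \ref{lem:xequiv-sylow-hated-by-max}, applied to $\pi_d\colon S_{d+1}\to S_d$, and then push the intersection condition from $\kappa_{d+1}$ up to $S_{d+1}$.

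First, work with the whole class groups. Take $G=G_{d+1}\cong\Cl(\oh_{d+1})$, $H=G_d$ and $\pi=\pi_d$. The kernel $\ker\pi$ is identified in diagram \eqref{kappa_diag} with $\ker\pi'_d$. By \cref{kerpid} this is an $\ell$-group (cyclic of order $\ell$ for $d\geq 1$). Indeed, $\ker(G_{d+1}\to G_d)$ has order $h(\oh_{d+1})/h(\oh_d)=\ell$ for $d>0$, so it lies in the Sylow $\ell$-subgroup and coincides with $\ker(\pi_d|_{S_{d+1}})$. Hence the hypothesis of \ref{lem:xequiv-sylow-hated-by-max} holds, with $G'=S_{d+1}$, $H'=S_d$ and $\pi'=\pi_d|_{S_{d+1}}$. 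That lemma then gives
\[
X\neq\emptyset \iff \ker\pi_d\cap \ell^r S_{d+1}\neq\{0\}.
\]

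Next, compare $\ell^r\kappa_{d+1}$ with $\ell^r S_{d+1}$. Since $\kappa_{d+1}\subseteq S_{d+1}$, we have $\ell^r\kappa_{d+1}\subseteq \ell^r S_{d+1}$. Diagram \eqref{kappa_diag} also gives $\ker\pi'_d=\ker\pi_d$ as subgroups of $S_{d+1}$. Therefore
\[
\{0\}\neq \ker\pi'_d\cap\ell^r\kappa_{d+1}\subseteq \ker\pi_d\cap\ell^r S_{d+1},
\]
so the right-hand side is nontrivial, and $X\neq\emptyset$ follows from the equivalence above.

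The only point needing care is the identification of $\ker(G_{d+1}\to G_d)$ with the Sylow kernel $\ker\pi_d$. This is needed to legitimately invoke part (b) of \cref{lem:xequiv}: one must check that the kernel on full class groups is an $\ell$-group. I would handle it with the index computation $\card{\Gal(H_{d+1}/H_d)}=\ell$ for $d\geq1$, which was noted in the proof of the $k=1$ lemma. Alternatively, the kernel maps injectively into $\kappa_{d+1}$ because it dies in $\Cl(\oh_0)$ and has $\ell$-power order. Everything else is an inclusion of subgroups.
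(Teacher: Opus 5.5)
Your proposal is correct and follows the paper's route: the paper states this corollary as a direct consequence of \ref{lem:xequiv-sylow-hated-by-max} applied to $\pi_d\colon G_{d+1}\to G_d$, together with $\ker\pi'_d=\ker\pi_d$ and $\ell^r\kappa_{d+1}\subseteq\ell^r S_{d+1}$, and your index computation $\card{\ker(G_{d+1}\to G_d)}=\ell$ usefully spells out the hypothesis that the full kernel is an $\ell$-group, which the paper only cites via \cref{kerpid}. Your proposed alternative justification is circular as worded, because it assumes the $\ell$-power order it is meant to establish; the index argument alone suffices.
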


By \ref{lem:xequiv-enjoyed-by-max} we have
\begin{cor}\label{cor:nonempty_x_iff}
  Let $d>0$, $k\geq 1$, $r=\nu_\ell(k)$, and suppose $\card\ker\pi'_d=\ell$.
  Then $X\neq\emptyset$ if and only if $\card{S_{d+1}/\ell^r S_{d+1}}=\card{S_d/\ell^rS_d}$.
\end{cor}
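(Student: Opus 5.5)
This follows from \ref{lem:xequiv-enjoyed-by-max}, applied to the surjection $\pi_d\st S_{d+1}\to S_d$ of finite abelian $\ell$-groups.

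The plan has three steps.

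\textbf{Step 1: reduce from $G_{d+1}$ to the Sylow subgroups.} The set $X$ of \cref{thm:existence} lives in $G_{d+1}$. By diagram \eqref{kappa_diag} we have $\ker(G_{d+1}\to G_d)=\ker\pi_d$ on the Sylow $\ell$-parts. For $d\ge 1$, the kernel of $G_{d+1}\to G_d$ has order $h(\oh_{d+1})/h(\oh_d)=\ell$ by the class number formula, so it is an $\ell$-group and lies inside $S_{d+1}$. Therefore \ref{lem:xequiv-sylow-hated-by-max} applies and gives
\begin{equation*}
X\neq\emptyset\iff X'\neq\emptyset,
\end{equation*}
where $X'$ is the analogous set for the restriction $\pi_d\st S_{d+1}\to S_d$.

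\textbf{Step 2: check the hypotheses of part (c).} Since $d\ge 1$, \cref{kerpid} gives $\ker\pi'_d\cong\ZZ/\ell\ZZ$. Diagram \eqref{kappa_diag} identifies $\ker\pi_d$ with $\ker\pi'_d$, so $\ker\pi_d$ is cyclic of order $\ell$. The map $\pi_d\st S_{d+1}\to S_d$ is surjective, because it is the restriction to Sylow subgroups of a surjection of finite abelian groups. So all the hypotheses of \ref{lem:xequiv-enjoyed-by-max} hold for $f=\pi_d$, $G=S_{d+1}$, $H=S_d$.

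\textbf{Step 3: conclude.} Part (c) then gives $X'\neq\emptyset$ if and only if $\card{S_{d+1}/\ell^rS_{d+1}}=\card{S_d/\ell^rS_d}$. Here part (c) is read with its set $X$ defined using $k$; by the end of the proof of (b), $kG=\ell^rG$, so this is the same as working with $\ell^r$. Combining with Step 1 gives the stated equivalence for $X$.

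The only mildly delicate point is Step 1, namely checking that the kernel of the full map $G_{d+1}\to G_d$ is an $\ell$-group, so that (b) may be invoked. This comes from the index $\ell$ computation together with \eqref{kappa_diag}, and is the same reasoning the paper already uses to state \cref{cor:kappa_and_x}. Everything else is a direct citation.
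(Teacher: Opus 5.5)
Your proposal is correct and follows the paper's route. The paper obtains this corollary directly from \ref{lem:xequiv-enjoyed-by-max} applied to $\pi_d\st S_{d+1}\to S_d$, with the passage from $G_{d+1}$ to the Sylow subgroups going through \ref{lem:xequiv-sylow-hated-by-max}. Your class-number check that the full kernel of $G_{d+1}\to G_d$ has order $\ell$ is a slightly more careful justification of the hypothesis of part (b) than the paper's brief appeal to \cref{kerpid}, which concerns only the Sylow-level kernel.
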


\section{Appearance of volcanoes}\label{sect:appearance}

We apply the criteria described in \cref{sect:solvability} to unconditionally prove the appearance or non-appearance of volcanoes of different types over $\fpk$.

\subsection{Craters}\label{subsect:craters}

\begin{lem}\label{depth0x}
  Let $d=0$ and $k\geq 1$.
  In the following cases we have $\card{X}>0$:
  \begin{enumerate}[label=(\alph*), ref=Lemma~\thelem(\alph*)]
    \item\label{depth0x-split} $\ell$ is split in $\oh_K$ and $(\ell - 1) \ndivides k$ (in particular $\ell \neq 2$);
    \item\label{depth0x-inert} $\ell$ is inert in $\oh_K$ and $(\ell + 1) \ndivides k$;
    \item\label{depth0x-ramified} $\ell$ is ramified in $\oh_K$ and $\ell \ndivides k$.
  \end{enumerate}
\end{lem}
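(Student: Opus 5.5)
The plan is to test the non-emptiness of $X$ directly on the kernel of $\pi_0\st G_1\to G_0$, without passing to Sylow subgroups. An element $\sigma\in\ker\pi_0$ has $\pi_0(\sigma)$ trivial, so $\ord(\pi_0(\sigma))=1$ divides $k$. Hence $\sigma\in X$ as soon as $\ord(\sigma)\ndivides k$. (This is the special case of \cref{lem:xequiv}(a) where the witness already lies in $\ker\pi_0$.) So it suffices, in each of the three cases, to exhibit an element of $\ker\pi_0$ whose order does not divide $k$. The natural way to do this is to show that $\ker\pi_0$ is cyclic of order $\ell-1$, $\ell+1$, or $\ell$ respectively, and take a generator.

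To identify $\ker\pi_0$ I would use the full (not Sylow-restricted) exact sequence behind \eqref{eq:kd}. This is the generalisation of \cite[(7.27)]{cox} to a base order $\oh_0$ of conductor $c$ prime to $\ell$, as in \cite[Proposition 6.4]{kopp} or the appendix:
\begin{equation*}
  1\to \oh_0^\times/\oh_1^\times \to \big(\oh_K/\ell\oh_K\big)^\times\big/\big(\ZZ/\ell\ZZ\big)^\times \to \Cl(\oh_1)\to\Cl(\oh_0)\to 1 .
\end{equation*}
By the standing hypothesis of \cref{thm:existence}, $D_K<-4$, so $\oh_K^\times=\{\pm1\}$. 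Hence $\oh_0^\times=\oh_1^\times=\{\pm1\}$, and $\ker\pi_0\cong(\oh_K/\ell\oh_K)^\times/\FF_\ell^\times$. This is the step that needs the most care: I must check that the appendix version of the sequence applies to the ring class field setting with $\oh_0$ non-maximal, and that the unit term really vanishes.

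The rest is a case analysis of $\oh_K/\ell\oh_K$.
\begin{itemize}
  \item \emph{Split.} Here $\oh_K/\ell\oh_K\cong\FF_\ell\times\FF_\ell$, with $\FF_\ell$ embedded diagonally. The quotient $(\FF_\ell^\times\times\FF_\ell^\times)/\FF_\ell^\times\cong\FF_\ell^\times$ is cyclic of order $\ell-1$, via $(a,b)\mapsto ab^{-1}$.
  \item \emph{Inert.} Here the quotient is $\FF_{\ell^2}^\times/\FF_\ell^\times$, which is cyclic of order $\ell+1$ as a quotient of a cyclic group.
  \item \emph{Ramified.} Here $\oh_K/\ell\oh_K\cong\FF_\ell[\varepsilon]/(\varepsilon^2)$, whose unit group is $\FF_\ell^\times\times(1+\varepsilon\FF_\ell)$. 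The quotient is $1+\varepsilon\FF_\ell\cong\ZZ/\ell\ZZ$.
\end{itemize}
In each case a generator $\sigma$ of $\ker\pi_0$ has order $\ell-1$, $\ell+1$, or $\ell$ respectively. By hypothesis that order does not divide $k$, so $\sigma\in X$ and $\card{X}>0$.

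The parenthetical "in particular $\ell\neq2$" in the split case is automatic: $\ell-1=1$ divides every $k$ when $\ell=2$. As a cross-check, the orders $\ell-(D_K/\ell)$ obtained here agree with the class number formula used in the proof of the $k=1$ lemma above.
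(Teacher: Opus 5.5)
Your proof is correct and follows the paper's argument: the paper also takes a generator of $\ker\pi_0$, which it identifies (via the appendix) with the cyclic group $\lambda_1$ of order $\ell-1$, $\ell+1$ or $\ell$, and then observes that this generator lies in $X$. The differences are minor: you compute $(\oh_K/\ell\oh_K)^\times/\FF_\ell^\times$ directly instead of quoting \cref{lambda_1}, and you state explicitly that the standing hypothesis $D_K<-4$ is what removes the unit term, which the paper's appendix uses only implicitly.
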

\begin{proof}
  Since $d=0$, we are looking at the map $\pi_0\st G_1\to G_0$.
  The kernel $\ker\pi_0$ is denoted $\lambda_1$ in \cref{lambda_1} and described as one of the cyclic groups $\ZZ/(\ell-1)\ZZ$, $\ZZ/(\ell+1)\ZZ$, resp. $\ZZ/\ell\ZZ$, when $\ell$ is split, inert, resp. ramified.
  In each case, letting $g$ be a generator of $\ker\pi_0$, we have that $\ord(g)\ndivides k$ and $\ord(\pi_0(g))=1\divides k$, so $g\in X$.
\end{proof}

\begin{thm}\label{existence_depth_0}
  Fix a prime $\ell$ and an integer $k\geq 1$.
  In each of the following cases, the volcano $(V_0,\ell,0)$ has infinitely many \explosive\ primes:
  \begin{enumerate}[label=(\alph*), ref=Theorem~\thethm(\alph*)]
    \item\label{existence_depth_0-inert} $V_0=I_1$ and $(\ell+1)\ndivides k$;
    \item\label{existence_depth_0-ramified1} $V_0=R_1$ and $\ell\ndivides 3k$;
    \item\label{existence_depth_0-ramified2} $V_0=R_2$ and $\ell\ndivides k$;
    \item\label{existence_depth_0-split} $V_0=S_n$ and $(\ell-1)\ndivides k$.
  \end{enumerate}
\end{thm}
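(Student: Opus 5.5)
The plan is to apply \ref{cor:existence} with $d=0$. In each case we exhibit an order $\oh_0$ that is compatible with $(V_0,\ell)$ and whose field $K$ has $D_K<-4$, so that \cref{thm:existence} applies, and such that the relevant case of \cref{depth0x} gives $\card{X}>0$. Since $d=0$, the set $X$ only involves $\pi_0\st G_1\to G_0$, whose kernel is $\ker\pi_0\cong\ZZ/(\ell-1)\ZZ$, $\ZZ/(\ell+1)\ZZ$ or $\ZZ/\ell\ZZ$ by \cref{lambda_1}. The divisibility hypotheses in the statement are exactly those of \cref{depth0x}, so the remaining work is to choose a suitable $K$ and $\oh_0$ in each case. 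We take $\oh_0=\oh_K$ wherever possible, because \cref{depth0x} is phrased in terms of the splitting of $\ell$ in $\oh_K$.

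\textbf{Cases (a) and (c).} For $I_1$ we need an imaginary quadratic field with $D_K<-4$ in which $\ell$ is inert. There are infinitely many, for example by choosing $D_K$ through congruence conditions modulo $4\ell$ or $8$ so that $\big(\frac{D_K}{\ell}\big)=-1$. Then \ref{depth0x-inert} applies.

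For $R_2$ we need $\ell$ ramified in $\oh_K$ with the prime $\LL$ above $\ell$ non-principal. By \ref{disc_upper_bound-ramified}, principality of $\LL$ forces $|D_K|\le 4\ell$. So any fundamental discriminant $D_K$ divisible by $\ell$ with $|D_K|>4\ell$ gives a compatible order, for instance $D_K=-4\ell q$ or $-\ell q$ for a suitable large prime $q$. Then \ref{depth0x-ramified} applies.

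\textbf{Case (d).} For $S_n$ we cite the existence result of \cite{BCP} (their depth-zero theorem \cite[Theorem 1.2]{BCP} and its proof). It provides imaginary quadratic fields with $\ell$ split and a prime $\LL$ above $\ell$ of order exactly $n$ in $\Cl(\oh_K)$; for $n\ge 2$ this forces $D_K<-4$. The case $n=1$ also needs $D_K<-4$, which requires some care for small $\ell$ because of \ref{disc_upper_bound-split}. We would either pick a field from the explicit finite list or use a non-maximal order $\oh_0$ of conductor prime to $\ell$. Note that \cref{depth0x} as stated requires $\ell$ split in $\oh_K$; if $\oh_0\neq\oh_K$, then $\ell$ is still split in $\oh_K$ because the conductor is prime to $\ell$. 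Then \ref{depth0x-split} gives $\card{X}>0$.

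\textbf{Case (b).} This is where I expect the main obstacle. Here \ref{disc_upper_bound-ramified} pins down the possibilities completely: $D_0=-4\ell$ if $\ell\equiv1\pmod 4$; $D_0\in\{-\ell,-4\ell\}$ (the latter with $c_0=2$) if $\ell\equiv3\pmod4$; and $D_0\in\{-8,-4\}$ if $\ell=2$. We must make sure that at least one of these has $D_K<-4$, which forces $\ell\ne3$.
\begin{itemize}
  \item For $\ell\equiv 1\pmod 4$, take $D_K=-4\ell$. One checks that $\LL=(2,1+\sqrt{-\ell})$ is not the prime above $\ell$; rather, $\LL=(\sqrt{-\ell})$ is principal, as needed.
  \item For $\ell\equiv 3\pmod 4$ with $\ell\ge 7$, take $\oh_K$ with $D_K=-\ell<-4$, and $\LL=(\sqrt{-\ell})$.
  \item For $\ell=2$, take $D_K=-8$, with $\LL=(\sqrt{-2})$.
\end{itemize}
For $\ell=3$, the only options are $D_K=-3$ (excluded by $D_K<-4$) or the order of conductor $2$ in $\QQ(\sqrt{-3})$, whose field discriminant is still $-3$; this is why the hypothesis $\ell\ndivides 3k$ excludes $\ell=3$. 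In the other cases $\ell\ndivides k$, and \ref{depth0x-ramified} gives $\card{X}>0$. Finally, the positive density of \explosive\ primes given by \ref{cor:existence} yields infinitely many of them.
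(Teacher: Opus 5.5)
Your strategy matches the paper's: for each crater type you choose a compatible maximal order with $D_K<-4$, get $\card{X}>0$ from \cref{depth0x}, and conclude via \cref{thm:existence}. Cases (a) and (c) are as in the paper. Your (b) is more explicit than the paper's one-line version, and it correctly explains why $\ell=3$ is excluded: every order compatible with $(R_1,3)$ lies in $\QQ(\sqrt{-3})$.

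The step you leave unfinished is $n=1$ in (d). The precise input here is \cite[Theorem 1.4(3)]{BCP}, with the fields $\QQ(\sqrt{1-\ell^n})$ and $\QQ(\sqrt{1-4\ell^n})$; it applies because $(\ell-1)\ndivides k$ forces $\ell$ odd. Your worry about $D_K<-4$ is justified. The paper's proof passes over it, and it is not a small-$\ell$ phenomenon: for $\ell=37$ one has $1-\ell=-36$ and $1-4\ell=-3\cdot 7^2$, so the two candidate fields are $\QQ(i)$ and $\QQ(\sqrt{-3})$.

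However, you do not actually resolve it, and your second remedy cannot work under the hypothesis of \cref{thm:existence}, which you yourself rely on in (b). A non-maximal $\oh_0$ does not change $K$. Moreover, if the prime $\LL$ above $\ell$ is principal in $\oh_0$, then $\LL\oh_K$ is already principal in $\oh_K$, so nothing is gained.

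A clean fix is as follows. Take an integer $0<a<2\sqrt{\ell}$ and let $\alpha$ be a root of $x^2-ax+\ell$. Then $\alpha$ is a non-rational integer of norm $\ell$ in $K=\QQ(\sqrt{a^2-4\ell})$. Since $\ell\ndivides a$, $\ell$ is unramified, so $\ell$ splits in $\oh_K$ with $\alpha\oh_K$ principal. You only need $a$ such that $4\ell-a^2$ is not of the form $b^2$ or $3b^2$, i.e.\ $K\neq\QQ(i),\QQ(\sqrt{-3})$. At most five values of $a$ fail, coming from the representations of $\ell$ by $x^2+y^2$ and by $x^2+xy+y^2$. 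So a good $a$ exists for all $\ell\geq 11$, and $\ell=3,5,7$ are handled by $a=1,1,2$, giving $D_K=-11,-19,-24$. For $\ell=37$, for instance, $a=8$ gives $4+\sqrt{-21}\in\QQ(\sqrt{-21})$, with $D_K=-84<-4$.
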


\begin{rem}
  The trick is to find an appropriate order for each case.
  In some sense for $k > 1$ the depth $d = 0$ case is much like the depth $d > 0$ case when $k = 1$.
  The orders we find are identical or quite similar to the ones given in \cite[\S4. Depth $d > 0$]{BCP}.
\end{rem}

\begin{proof}[Proof of \cref{existence_depth_0}]
  For each crater type $V_0$ and prime $\ell$, we find a maximal order $\oh_K$ satisfying the corresponding condition in \cref{depth0x}.
  \begin{enumerate}
    \item (Same as case (1) in \cite[\S4. Depth $d>0$]{BCP}.)\\
      Suppose $V_0 = I_1$.
      By Dirichlet's theorem on arithmetic progressions, there are infinitely many imaginary quadratic fields where $\ell$ is inert.
      Let $\oh_K$ be the ring of integers of any one of them, and apply \ref{depth0x-inert}.
    \item
      Suppose $V_0 = R_1$.
      Take $\oh_K$ to be a corresponding maximal order given by \ref{disc_upper_bound-ramified}.
      Then apply \ref{depth0x-ramified}.
    \item (Same as case (4) in \cite[\S4. Depth $d>0$]{BCP}.)\\
      Suppose $V_0 = R_2$.
      Let $q\neq\ell$ be prime such that $q\geq 5$.
      Let $\oh_K$ be the ring of integers in $K=\QQ(\sqrt{-\ell q})$, then $\ell\oh_K=\LL^2$ is ramified but \ref{disc_upper_bound-ramified} shows that $\LL$ is not principal since the discriminant of $\oh_K$ satisfies $|D|\geq q\ell > 4\ell$.
      Then apply \ref{depth0x-ramified}.
    \item
      Fix $n\geq 1$ and suppose $V_0 = S_n$.
      Since $(\ell-1)\ndivides k$, $\ell$ is odd.
      Let $K_1=\QQ(\sqrt{1-\ell^n})$ and $K_2=\QQ(\sqrt{1-4\ell^n})$.
      By \cite[Theorem 1.4 (3)]{BCP}, either in $\oh_{K_1}$ or in $\oh_{K_2}$, the prime $\ell$ splits into two prime ideals of order $n$ in the class group.
      Let $\oh_K$ be the corresponding ring of integers.
      Then apply \ref{depth0x-split}.
  \end{enumerate}
  In each case $\card{X} \neq 0$, and so the result follows from \cref{thm:existence}.
\end{proof}

\begin{cor}\label{cor:existence_depth_0}
  Fix a crater $V_0$ and $k \geq 1$.
  There exist infinitely many pairs $(p, \ell)$ such that $(V_0, \ell, 0)$ appears in $\gpk$.
\end{cor}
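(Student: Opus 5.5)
The plan is to deduce \cref{cor:existence_depth_0} directly from \cref{existence_depth_0}: for a fixed crater $V_0$ and fixed $k\geq 1$, I will exhibit infinitely many primes $\ell$ satisfying the hypothesis of the relevant case of that theorem. Each such $\ell$ comes with infinitely many (in particular at least one) \explosive\ primes $p\neq\ell$. Since distinct $\ell$ give distinct pairs $(p,\ell)$, this yields infinitely many pairs.

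The choice of $\ell$ goes crater by crater. For $V_0=I_1$ we need $(\ell+1)\ndivides k$, which holds for every prime $\ell\geq k$, since then $\ell+1>k$. For $V_0=R_1$ we need $\ell\ndivides 3k$, which holds for all primes $\ell>\max(3,k)$. For $V_0=R_2$ we need $\ell\ndivides k$, which holds for all primes $\ell>k$. For $V_0=S_n$ (any $n\geq 1$) we need $(\ell-1)\ndivides k$, which holds for all primes $\ell>k+1$. In every case all but finitely many primes qualify, so there are infinitely many valid $\ell$.

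The only point needing care is to check that the conditions in \cref{existence_depth_0} are exactly those listed above, including the implicit parity requirement in case (d). There, $(\ell-1)\ndivides k$ forces $\ell$ odd, and this is automatic for large $\ell$. I do not expect a genuine obstacle, since all of the arithmetic (the existence of compatible maximal orders and the nonemptiness of $X$) has already been handled in the proof of \cref{existence_depth_0} via \cref{depth0x} and \cref{thm:existence}.
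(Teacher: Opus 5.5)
Your proposal is correct and is essentially the deduction the paper leaves implicit: for fixed $V_0$ and $k$, every sufficiently large prime $\ell$ satisfies the hypothesis of the relevant case of \cref{existence_depth_0}, and that theorem then supplies infinitely many \explosive\ primes $p$. A single admissible $\ell$ already gives infinitely many pairs $(p,\ell)$, so producing infinitely many $\ell$ is more than the statement needs, though it does no harm.
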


\subsection{Volcanoes of nonzero depth}\label{subsect:non-craters}

\begin{lem}\label{lem:strong_nonemptyx}
  Let $\ell$ be a prime and let $\oh_0$ be an order of conductor coprime to $\ell$.
  Fix $d > 0$ and $k\geq 1$ and let $r = \nu_\ell(k)$.
  Then $\card{X} > 0$ in each of the following cases:

  \begin{center}
    \settabprefix{Lemma~\thelem}
    \setcounter{tabenum}{0}
    \begin{tabular}{llllll}
      \toprule
                                                  & $d$      & $\ell$   & $\ell$ in $\oh_K$ & extra assumptions      & condition on $r$ \\ \midrule
      \tabitem{lem:nonemptyx-split-inert-general} & $\geq 1$ & $\geq 3$ & split or inert    & none                   & $r<d$            \\ \midrule
      \tabitem{lem:nonemptyx-ramified-general}    & $\geq 1$ & $=2$     & ramified          & $D_K\equiv 8\mod{16}$  & $r<d+1$          \\
                                                  &          & $=3$     &                   & $D_K\equiv 3\mod{9}$                      \\
                                                  &          & $\geq 5$ &                   & none                                      \\ \midrule
      \tabitem{lem:nonemptyx-ramified-special}    & $\geq 1$ & $=2$     & ramified          & $D_K\equiv 12\mod{16}$ & $r<d$            \\
                                                  &          & $=3$     &                   & $D_K\equiv 6\mod{9}$                      \\ \midrule
      \tabitem{lem:nonemptyx-split-inert-special} & $\geq 2$ & $=2$     & split or inert    & none                   & $r<d-1$          \\
      \bottomrule
    \end{tabular}
  \end{center}

  The remaining special case (in part (d)) is: for $\ell=2$ split or inert and $d<2$, if $r<1$ then $\card{X}>0$.
\end{lem}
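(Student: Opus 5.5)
The plan is to apply \cref{cor:kappa_and_x}: it suffices to show $\ker\pi'_d\cap \ell^r\kappa_{d+1}\neq\{0\}$. Since $d>0$, \cref{kerpid} gives $\ker\pi'_d\cong\ZZ/\ell\ZZ$, so this intersection is nontrivial exactly when $\ker\pi'_d\subseteq\ell^r\kappa_{d+1}$. This reduces everything to the explicit descriptions of $\kappa_{d+1}$, $\kappa_d$ and $\pi'_d$ in \cref{kappa-classification}. The sets $X$ and the Sylow reduction are already handled by \ref{lem:xequiv-sylow-hated-by-max}, because $\ker\pi_d=\ker\pi'_d$ is an $\ell$-group by diagram \eqref{kappa_diag}.

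The cases are then read off from the table in \cref{kappa-classification}.

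In case (a), $\kappa_{d+1}\cong\ZZ/\ell^{d}\ZZ$ and $\pi'_d$ is the canonical surjection onto $\ZZ/\ell^{d-1}\ZZ$. Its kernel is $\ell^{d-1}\ZZ/\ell^d\ZZ$, the unique subgroup of order $\ell$. This kernel lies in $\ell^r\kappa_{d+1}$ precisely when $r\le d-1$, that is, when $r<d$.

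In case (b), $\kappa_{d+1}\cong\ZZ/\ell^{d+1}\ZZ$ with kernel $\ell^d\ZZ/\ell^{d+1}\ZZ$. This requires $r\le d$, that is, $r<d+1$.

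In case (c), $\kappa_{d+1}\cong\ZZ/\ell^{d}\ZZ\times\ZZ/\ell\ZZ$ and $\pi'_d=\can\times\id$. The kernel is $\ell^{d-1}\ZZ/\ell^d\ZZ\times\{0\}$, while $\ell^r\kappa_{d+1}=\ell^r\ZZ/\ell^d\ZZ\times \ell^r(\ZZ/\ell\ZZ)$. Containment holds iff $r\le d-1$, giving $r<d$. For $r\ge 1$ the second factor vanishes, but that does not matter for the first coordinate.

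In case (d), with $d\ge2$, $\kappa_{d+1}\cong\ZZ/2^{d-1}\ZZ\times\ZZ/2\ZZ$ with kernel $2^{d-2}\ZZ/2^{d-1}\ZZ\times\{0\}$. This requires $r\le d-2$, that is, $r<d-1$.

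In the remaining special case $\ell=2$ split or inert with $d=1$, we have $\kappa_1=0$ and $\kappa_2\cong\ZZ/2\ZZ$ (table row (d) with $d=2$ gives $\ZZ/2^0\times\ZZ/2$). So $\ker\pi'_1=\kappa_2\cong\ZZ/2\ZZ$, which lies in $2^r\kappa_2$ iff $r=0$.

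The main point needing care is that the identification of $\ker\pi'_d$ must be made with the correct coordinates. In the product cases, \cref{kappa-classification} says $\pi'_d$ is $\can\times\id$ on the explicit decomposition, so the kernel sits in the cyclic factor and not in the $\ZZ/\ell\ZZ$ factor. I will take this from the theorem as stated, noting that the decompositions of $\kappa_{d+1}$ and $\kappa_d$ are compatible.

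There is also a boundary subtlety. In case (d) at $d=2$, the source $\kappa_3$ has the stated form but the target $\kappa_2\cong\ZZ/2^0\times\ZZ/2\ZZ$ is degenerate. I will check that the $\can\times\id$ description still makes sense there, so that the kernel is the first factor $\ZZ/2\ZZ$, which lies in $2^r\kappa_3$ iff $r=0<d-1=1$.

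With the containment verified in each row, \cref{cor:kappa_and_x} yields $\card{X}>0$.
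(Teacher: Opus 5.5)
Your proposal is correct and matches the paper's proof: you reduce via \cref{cor:kappa_and_x} to $\ker\pi'_d\cap\ell^r\kappa_{d+1}\neq\{0\}$, then read off the generator of $\ker\pi'_d$ from \cref{kappa-classification}. That generator is $\ell^{d-1}$, $\ell^d$, $(\ell^{d-1},0)$, $(2^{d-2},0)$, or all of $\kappa_2\cong\ZZ/2\ZZ$ in the special case, and you check it lies in $\ell^r\kappa_{d+1}$ under the stated bound on $r$. The only cosmetic difference is in cases (c) and (d): the paper treats $r=0$ separately and uses that the $\ZZ/\ell\ZZ$ factor of $\ell^r\kappa_{d+1}$ vanishes for $r>0$, whereas you compute $\ell^r\kappa_{d+1}$ as a product directly, which amounts to the same check.
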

\begin{proof}
  In each case, we extract from the relevant part of \cref{kappa-classification} and \cref{lambda-classification} the information necessary to deduce that $\ker\pi'_d\cap \ell^r \kappa_{d+1}\neq\{0\}$, then use \cref{cor:kappa_and_x}.
  \begin{enumerate}
    \item
      By \ref{kappa-class-split-inert-general}, $\ker\pi'_d$ is generated by $\ell^{d-1}$ as a subgroup of $\kappa_{d+1}\cong \ZZ/\ell^d\ZZ$.
      Since $\ell^r \kappa_{d+1}\cong \ell^r\ZZ/\ell^d\ZZ$, the condition $r<d$ implies that $\ell^{d-1}=\ell^r\ell^{d-1-r}\in\ell^r\kappa_{d+1}$, as required.
    \item
      Same as (a), except that by \ref{kappa-class-ramified-general}, $\ker\pi'_d$ is generated by $\ell^d$ as a subgroup of $\kappa_{d+1}\cong \ZZ/\ell^{d+1}\ZZ$.
      The shift from $\ell^{d-1}$ to $\ell^d$ in comparison to the previous part explains the adjusted condition $r<d+1$.
    \item From \ref{kappa-class-ramified-special}, $\ker\pi'_d$ is generated by $(\ell^{d-1},0)\in \ZZ/\ell^d\ZZ\times \ZZ/\ell\ZZ\cong \kappa_{d+1}$.
      The case $r=0$ is trivial.
      If $r>0$, we have $\ell^r \kappa_{d+1}\cong \ell^r\ZZ/\ell^d\ZZ\times\{0\}$ and we conclude as in (a).
    \item From \ref{kappa-class-split-inert-special}, if $d\geq 2$ then $\ker\pi'_d$ is generated by $(2^{d-2},0)\in\ZZ/2^{d-1}\ZZ\times\ZZ/2\ZZ\cong\kappa_{d+1}$.
      The case $r=0$ is trivial.
      If $r>0$, we have $2^r \kappa_{d+1}\cong 2^r\ZZ/2^{d-1}\ZZ\times\{0\}$, so the condition $r<d-1$ allows us to conclude.

      The special case with $d<2$ uses $\kappa_1\cong 0$ and $\kappa_2\cong\ZZ/2\ZZ$.
      \qedhere
  \end{enumerate}
\end{proof}

\begin{thm}\label{thm:exist_primes}
  Fix a prime $\ell$, $d>0$, and $k\geq 1$.
  Let $r=\nu_\ell(k)$.
  In each of the following cases, the given volcano $(V_0,\ell,d)$ has infinitely many \explosive\ primes.

  \begin{center}
    \settabprefix{Theorem~\thethm}
    \setcounter{tabenum}{0}
    \begin{tabular}{lllll}
      \toprule
                                                        & $V_0$        & $d$      & $\ell$   & condition on $r$ \\ \midrule
      \tabitem{thm:exist_primes-split-inert-general}    & $S_n$, $I_1$ & $\geq 1$ & $\geq 3$ & $r<d$            \\ \midrule
      \tabitem{thm:exist_primes-ramified-non-principal} & $R_2$        & $\geq 1$ & $\geq 2$ & $r<d+1$          \\ \midrule
      \tabitem{thm:exist_primes-ramified-principal}     & $R_1$        & $\geq 1$ & $\neq 3$ & $r<d+1$          \\
                                                        &              &          & $=3$     & $r<d$            \\ \midrule
      \tabitem{thm:exist_primes-split-inert-special}    & $S_n$, $I_1$ & $\geq 2$ & $=2$     & $r<d-1$.         \\
      \bottomrule
    \end{tabular}
  \end{center}

  The remaining special case (in part (d)) is: if $V_0=S_n$ or $I_1$, $\ell=2$, $d=1$, and $r<1$ (that is, $k$ is odd), then the volcano $(V_0,\ell,d)$ has infinitely many \explosive\ primes.
\end{thm}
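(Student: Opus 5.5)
The approach is to reduce everything to \cref{cor:existence}. For each row of the table we exhibit one order $\oh_0$ compatible with $(V_0,\ell)$ whose discriminant is $<-4$, so that \cref{thm:existence} applies. We then check that $\oh_K$ falls into a row of \cref{lem:strong_nonemptyx} whose condition on $r$ is the one stated, which gives $\card{X}>0$. In almost all cases we take $\oh_0=\oh_K$ to be a maximal order, reusing the choices made in the proof of \cref{existence_depth_0}.

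\textbf{Split and inert craters.} For $V_0=I_1$ we take any imaginary quadratic field with $D_K<-4$ in which $\ell$ is inert. Dirichlet's theorem supplies infinitely many such fields, so we may discard $\QQ(\sqrt{-1})$ and $\QQ(\sqrt{-3})$. For $V_0=S_n$ we take the field $K_1$ or $K_2$ from \cite[Theorem 1.4 (3)]{BCP}, in which $\ell$ splits into primes of order $n$. Its discriminant has absolute value about $\ell^n$, hence is $<-4$; we must check this for $\ell=2$, $n$ small, and \cref{disc_upper_bound-split} rules out tiny discriminants only in the direction we do not need. The case $\ell\geq 3$ then follows from \ref{lem:nonemptyx-split-inert-general}, giving case (a). The case $\ell=2$ follows from \ref{lem:nonemptyx-split-inert-special}, giving case (d), and the special case $d=1$, $k$ odd, is the final sentence of \cref{lem:strong_nonemptyx}.

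\textbf{Crater $R_2$.} We take $K=\QQ(\sqrt{-\ell q})$ with $q\geq 5$ a prime different from $\ell$, as in \ref{existence_depth_0-ramified2}. Then $\LL$ is non-principal and $D_K<-4$. To reach the condition $r<d+1$ we must land in \ref{lem:nonemptyx-ramified-general} rather than \ref{lem:nonemptyx-ramified-special}, so we choose $q$ by congruence.
\begin{itemize}
\item For $\ell\geq 5$ no extra condition is needed.
\item For $\ell=2$ we need $D_K\equiv 8\bmod 16$. Here $D_K=-8q$, and $-8q\equiv 8\bmod 16$ holds for every odd $q$.
\item For $\ell=3$ we need $D_K\equiv 3\bmod 9$. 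Write $D_K=-3q$ or $-12q$ according to $-3q\bmod 4$. We choose $q$ in a residue class mod $36$ forcing $-3q\equiv 1\bmod 4$ and $-q\equiv 1\bmod 3$, so that $D_K=-3q\equiv 3\bmod 9$. Dirichlet's theorem provides such $q$.
\end{itemize}

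\textbf{Crater $R_1$ (the main obstacle).} Here we have no freedom: \cref{disc_upper_bound-ramified} lists all compatible orders. For $\ell\equiv 1\bmod 4$ the only option is $D_K=-4\ell$. For $\ell\equiv 3\bmod 4$ it is $D_K=-\ell$ with $c_0\in\{1,2\}$. For $\ell=2$ it is $D_K\in\{-8,-4\}$.
\begin{itemize}
\item For $\ell\geq 5$ we use the maximal order, which has $D_K<-4$, and apply \ref{lem:nonemptyx-ramified-general}.
\item For $\ell=2$ we take $D_K=-8\equiv 8\bmod 16$, which avoids the excluded $-4$, and again get $r<d+1$.
\item For $\ell=3$ the maximal order of $\QQ(\sqrt{-3})$ has $D_K=-3$, which is excluded by \cref{thm:existence}. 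We therefore use the order of conductor $2$, with discriminant $-12$.
\end{itemize}
The case $\ell=3$ is where care is needed. \cref{lem:strong_nonemptyx} is phrased in terms of $D_K\bmod 9$, but the underlying kernels $\kappa_d$ depend on the $3$-adic completion of $\oh_0$. Since $c_0=2$ is prime to $3$, this completion coincides with that of $\oh_K$, with $D_K=-3\equiv 6\bmod 9$. So we land in \ref{lem:nonemptyx-ramified-special}, which gives only $r<d$, matching the statement.

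We must also verify that the proof of \cref{thm:existence} goes through for a non-maximal $\oh_0$ as long as its discriminant is $<-4$. The hypothesis there is really about units, and $\ZZ[\sqrt{-3}]^\times=\{\pm1\}$, so this holds. Finally, $\kappa_d$ from \cref{kappa-classification} must be used with conductor $2\cdot 3^d$; we justify this via the appendix reduction from $\oh_0$ to $\oh_K$, since $\kappa_d$ depends only on $(\oh_K/\ell^d\oh_K)^\times$. Once every case is settled, \cref{cor:existence} gives the positive-density, hence infinite, set of \explosive\ primes.
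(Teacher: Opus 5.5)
Your strategy is the paper's: choose one compatible order per row, identify the applicable row of \cref{lem:strong_nonemptyx}, and conclude with the existence corollary. Your $R_1$ choices are identical to the paper's. Your family $\QQ(\sqrt{-\ell q})$ for $R_2$ is a harmless generalisation of the paper's fixed fields $\QQ(\sqrt{-5\ell})$ and $\QQ(\sqrt{-35})$. Your remark that \cref{thm:existence} really needs $\oh_0^\times=\{\pm1\}$, rather than $D_K<-4$, is a genuine improvement. The paper applies that theorem to the conductor-$2$ order in $\QQ(\sqrt{-3})$ without comment.

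There is a concrete gap in case (d) for $V_0=S_n$. You use $K_1=\QQ(\sqrt{1-\ell^n})$ or $K_2=\QQ(\sqrt{1-4\ell^n})$ for every $\ell$, including $\ell=2$. For $\ell=2$, $n=4$ neither works. In $K_1=\QQ(\sqrt{-15})$ the primes above $2$ have order $2$, and $K_2=\QQ(\sqrt{-63})=\QQ(\sqrt{-7})$ has class number $1$. So you have not produced any order compatible with $(S_4,2)$. The paper handles $\ell=2$ separately, following the $\ell=2$ part of \cite[Theorem 1.4]{BCP}. It takes $K=\QQ(\sqrt{1-2^{n+2}})$ for $n\neq 4$, and $K=\QQ(\sqrt{-39})$ for $n=4$; the latter has class group $\ZZ/4\ZZ$ generated by a prime above $2$.

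Separately, your reason for $D_K<-4$ (``$|D_K|$ is about $\ell^n$'') is false, because $1-\ell^n$ and $1-4\ell^n$ can have large square factors. For $\ell=37$, $n=1$ you get $K_1=\QQ(\sqrt{-36})=\QQ(i)$ and $K_2=\QQ(\sqrt{-147})=\QQ(\sqrt{-3})$. Both candidates are therefore excluded; the paper's proof does not check this either.

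For $n\geq 2$ there is no problem: the field in which $\ell$ has order $n$ has class number at least $2$, so it is neither $\QQ(i)$ nor $\QQ(\sqrt{-3})$. For $n=1$ you need another order when both candidates fail. For $\ell=37$ you can take $\QQ(\sqrt{-139})$, where $37=N\big(\tfrac{3+\sqrt{-139}}{2}\big)$. In general you can take the order of discriminant $1-4\ell$. It contains $\tfrac{1+\sqrt{1-4\ell}}{2}$, of norm $\ell$, and has unit group $\{\pm1\}$, so your units remark applies.

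If you lean on that remark, compute the kernel of $\Cl(\oh_d)\to\Cl(\oh_0)$ from the exact sequence relative to $\oh_0$, whose unit term is trivial. The appendix's derivation from Cox tacitly assumes $\oh_K^\times=\{\pm1\}$.
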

\begin{proof}
  The strategy is to identify some appropriate order $\oh_0$ and use the relevant part of \cref{lem:strong_nonemptyx} to show that $\card{X}>0$, then conclude by \ref{cor:existence}.

  \begin{enumerate}
    \item Here $\ell\geq 3$.
      For $V_0=I_1$, let $\oh_0=\oh_K$ where $K$ is such that $\ell$ is inert; in other words, the discriminant $D_K$ satisfies $\left(\frac{D_K}{\ell}\right)=-1$.
      For $V_0=S_n$, let $\oh_0=\oh_K$ as in the proof of \ref{existence_depth_0-split}.
      In both cases we are done by \ref{lem:nonemptyx-split-inert-general}.

    \item For $V_0=R_2$ we follow the proof of \ref{existence_depth_0-ramified2}: if $\ell\neq 5$, let $\oh_0=\oh_K$ with $K=\QQ(\sqrt{-5\ell})$.
      For $\ell=5$, take $\oh_0=\oh_K$ with $K=\QQ(\sqrt{-35})$.
      In all cases we are in the situation of \ref{lem:nonemptyx-ramified-general}.

    \item For $V_0=R_1$, take $K=\QQ(\sqrt{-\ell})$ (this is guided by \cref{disc_upper_bound}).
      If $\ell\neq3$, let $\oh_0=\oh_K$ and we are done by \ref{lem:nonemptyx-ramified-general} (when $\ell=2$, we have $D_K=-8\equiv 8\mod{16}$).
      For $\ell=3$, let $\oh_0$ be the order of conductor $2$ in $K$.
      Then $D_K=-3\equiv 6\mod{9}$, so we apply \ref{lem:nonemptyx-ramified-special} instead.

    \item Here $\ell=2$.
      For $V_0=I_1$, let $\oh_0=\oh_K$ where $K$ is such that $2$ is inert; in other words $D_K\equiv 5\mod{8}$.

      For $V_0=S_n$, we follow \cite[Theorem 1.4]{BCP} and let $K=\QQ(\sqrt{-39})$ if $n=4$ and $K=\QQ(\sqrt{1-2^{n+2}})$ if $n\neq 4$, then take $\oh_0=\oh_K$.

      If $d\geq 2$ and $r<d-1$, then \ref{lem:nonemptyx-split-inert-special} gives us $\card{X}>0$ and we are done.

      If $d<2$ and $r<1$, we conclude by using the special case in \ref{lem:nonemptyx-split-inert-special}.
      \qedhere
  \end{enumerate}
\end{proof}

\begin{rem}\label{rem:maximal_orders_for_existence}
  In almost all cases, the order $\oh_0$ described above is a maximal order.
  The only exception is the case $(R_1, 3, d)$, where we need to take $\oh_0$ of conductor $2$.
\end{rem}

In the principally ramified case $R_1$ we can prove a converse to \cref{thm:exist_primes}:
\begin{thm}\label{thm:converse_R1}
  Fix a prime $\ell$, $d>0$, and $k\geq 1$.
  Let $r=\nu_\ell(k)$.

  If $\ell\neq 3$ and $r\geq d+1$, the volcano $(R_1,\ell,d)$ has no \explosive\ primes.

  If $r\geq d$, the volcano $(R_1,3,d)$ has no \explosive\ primes.
\end{thm}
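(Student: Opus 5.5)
The plan is to apply \ref{cor:non-existence}: I will show that for \emph{every} order $\oh_0$ compatible with $(R_1,\ell)$, the set $X$ of \cref{thm:existence} is empty. (For ordinary curves $p$ is automatically split in $\oh_K$, so $\mathcal P$ exhausts all possible \explosive\ primes.)

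The compatible orders are completely listed by \ref{disc_upper_bound-ramified}. Writing $D_0=c_0^2D_K$, they are:
\begin{itemize}
  \item $(D_K,c_0)=(-4\ell,1)$ for $\ell\equiv1\bmod 4$;
  \item $(-\ell,1)$ and $(-\ell,2)$ for $\ell\equiv 3\bmod 4$;
  \item $(-8,1)$ and $(-4,1)$ for $\ell=2$.
\end{itemize}
In every case $K=\QQ(\sqrt{-\ell})$, so there are only finitely many cases to handle.

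\textbf{Step 1: $S_0$ is trivial.} For the maximal orders this is exactly \cref{ell-not-divide-class-num}. For the order of conductor $2$ when $\ell\equiv 3\bmod 4$, the class number formula gives
\begin{equation*}
  h(\oh_0)=h_K\,\frac{2-\left(\frac{D_K}{2}\right)}{[\oh_K^\times:\oh_0^\times]}.
\end{equation*}
For $\ell\geq 7$ the numerator factor is $1$ or $3$ and the unit index is $1$, so $\ell\nmid h(\oh_0)$. For $\ell=3$ the unit index is $3$ and $h(\oh_0)=1$. Hence $S_0=1$, so $S_d=\kappa_d$ for all $d$, and $\ker\pi_d=\ker\pi'_d$ has order $\ell$ by \cref{kerpid}.

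\textbf{Step 2: reduce to a kernel computation.} By \ref{lem:xequiv-sylow-hated-by-max}, applied to $\pi_d\colon G_{d+1}\to G_d$ (whose kernel is an $\ell$-group), we have $X=\emptyset$ if and only if $\ker\pi'_d\cap\ell^r\kappa_{d+1}=\{0\}$. I then read off $\kappa_{d+1}$ and a generator of $\ker\pi'_d$ from \cref{kappa-classification}, case by case.
\begin{itemize}
  \item \emph{$\ell\ge5$, or $\ell=2$ with $D_K=-8\equiv 8\bmod{16}$.} This is \ref{kappa-class-ramified-general}, with $\kappa_{d+1}\cong\ZZ/\ell^{d+1}\ZZ$. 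If $r\ge d+1$ then $\ell^r\kappa_{d+1}=0$.
  \item \emph{$\ell=2$ with $D_K=-4\equiv12\bmod{16}$.} This is \ref{kappa-class-ramified-special}, with $\kappa_{d+1}\cong\ZZ/2^d\ZZ\times\ZZ/2\ZZ$. For $r\ge d+1$ (indeed already $r\ge d$, $r\ge1$) we get $2^r\kappa_{d+1}=0$.
  \item \emph{$\ell=3$, both $c_0=1$ and $c_0=2$.} Here $D_K=-3\equiv 6\bmod 9$, so again \ref{kappa-class-ramified-special} applies, with $\kappa_{d+1}\cong\ZZ/3^d\ZZ\times\ZZ/3\ZZ$. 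For $r\ge d$ and $d\ge1$ we have $3^r\kappa_{d+1}=0$.
\end{itemize}
In each case the intersection is trivial, so $X=\emptyset$ and \ref{cor:non-existence} finishes the proof.

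\textbf{Main obstacle.} \cref{thm:existence} is stated under the hypothesis $D_K<-4$, which fails for $K=\QQ(i)$ and $K=\QQ(\sqrt{-3})$, exactly the cases $\ell=2$ with $D_K=-4$ and $\ell=3$. There I must check that the direction ``$X=\emptyset\Rightarrow\mathcal P=\emptyset$'' still holds. That direction only uses the translation, via \ref{j_inv_residue_deg-divisibility_criterion}, of ``some curve with CM by $\oh_d$ but none with CM by $\oh_{d+1}$ over $\fpk$'' into a condition on Frobenius orders, which needs no assumption on units.

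The extra units of these fields affect only the maximal order itself. Its $j$-invariants $1728$ and $0$ are excluded from the vertex set of $\gpk$, so that order contributes no vertices. The orders $\oh_d$ with $d\ge1$ have unit group $\{\pm1\}$, except for the conductor-$2$ order when $\ell=3$; that order contributes no vertices via $j=0$ either, since its own $j$-invariant is not $0$. The $\kappa$-computations above already incorporate the unit quotients through \cref{kappa-classification}. I would verify these points carefully, and in particular double-check that the $\ell=3$, $c_0=2$ order has $\Cl(\oh_0)$ of order prime to $3$, as computed in Step 1.
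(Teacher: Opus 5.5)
Your proposal is correct and follows the paper's proof essentially step for step: you enumerate the compatible orders via \cref{disc_upper_bound}, show $S_0=1$ so that $S_{d+1}=\kappa_{d+1}$, and read off $\ell^r\kappa_{d+1}=0$ from \cref{kappa-classification}. Your exact formula $h(\oh_0)=h_K\bigl(2-\bigl(\tfrac{D_K}{2}\bigr)\bigr)/[\oh_K^\times:\oh_0^\times]$ for the conductor-$2$ order is more accurate than the paper's claim that $h(\oh_0)$ divides $2h_K$, which fails for $\ell=11$. The paper simply discards the maximal orders of $\QQ(i)$ and $\QQ(\sqrt{-3})$ by requiring discriminant $<-4$, and your observation that they contribute no vertices is the right justification. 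However, your claim that \cref{kappa-classification} already accounts for their extra units is false: the true kernels are proper quotients of the listed groups. This is harmless here, since those quotients are still killed by $\ell^r$.
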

\begin{proof}
  Let $\oh_0$ be any order satisfying \ref{disc_upper_bound-ramified} with discriminant $<-4$, which exhausts all possibilities for $V_0=R_1$.
  We argue that $S_0$, the Sylow $\ell$-subgroup of $\Cl(\oh_0)$, is trivial, so that $\kappa_{d+1}=S_{d+1}$ and we can use \ref{lem:xequiv-sylow-hated-by-max} in the guise: $\ker\pi'_d\cap\ell^r\kappa_{d+1}\neq\{0\}$ if and only if $X\neq\emptyset$.

  In all such cases appearing in \ref{disc_upper_bound-ramified}, $\ell$ does not divide the class number of $\oh_K$: $K=\QQ(\sqrt{-\ell})$, where this claim follows from \cref{ell-not-divide-class-num}.
  So in the cases where $\oh_0=\oh_K$, $\ell$ does not divide the order of $\Cl(\oh_0)$, thus $S_0$ is trivial.

  This leaves the case $\ell\equiv 3\mod{4}$, $K=\QQ(\sqrt{-\ell})$, $\oh_0$ of conductor $2$ in $\oh_K$.
  By the class number formula appearing in \cite[Theorem 7.24]{cox} we have that the class number of $\oh_0$ divides $2$ times the class number of $\oh_K$, so again $\ell$ does not divide the order of $\Cl(\oh_0)$.

  We can now proceed as in the proof of \ref{lem:nonemptyx-ramified-general} and \ref{lem:nonemptyx-ramified-special}.
  \begin{itemize}
    \item If $\ell\geq 5$ then by \ref{kappa-class-ramified-general} we have $\kappa_{d+1}\cong\ZZ/\ell^{d+1}\ZZ$, so $\ell^r\kappa_{d+1}\cong\ell^r\ZZ/\ell^{d+1}\ZZ$; hence if $r\geq d+1$, $\ell^r S_{d+1}=\ell^r \kappa_{d+1}=\{0\}$ so $X=\emptyset$.
    \item If $\ell=3$ then $D_K=-3\equiv 6\mod{9}$, so by \ref{kappa-class-ramified-special} we have $\kappa_{d+1}\cong\ZZ/\ell^d\ZZ\times\ZZ/\ell\ZZ$.
      Hence if $r\geq d$ then $X=\emptyset$.
    \item Finally, if $\ell=2$ then we have $D_K=-8\equiv 8 \mod{16}$ and \ref{kappa-class-ramified-general} gives $\kappa_{d+1}\cong\ZZ/\ell^{d+1}\ZZ$.
      So if $r\geq d+1$ then $X=\emptyset$.
      \qedhere
  \end{itemize}
\end{proof}

In the split case $S_n$ we can prove a weak partial converse to \cref{thm:exist_primes}:
\begin{prop}
  Fix a prime $\ell$, $d>0$, and $k\geq 1$.
  Let $r=\nu_\ell(k)$.
  There exists $N_d \in \NN$ such that if $r \geq N_d$, the volcano $(S_n,\ell,d)$ has no \explosive\ primes.
\end{prop}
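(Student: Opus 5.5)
The approach is to use the finiteness of orders compatible with an $S_n$ crater. For each such order, $\ell^r$ kills the relevant Sylow subgroup once $r$ is large, and \ref{cor:non-existence} then gives the conclusion. Throughout, $\ell$ and $n$ are fixed, so $N_d$ may depend on $(\ell,n,d)$.

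\textbf{Step 1: finiteness of compatible orders.} Let $\oh_0$ be any order compatible with $(S_n,\ell)$. Then $\ell\oh_0=\LL\LLbar$ with $\LL\neq\LLbar$ and $\ord(\LL)=n$. By \ref{disc_upper_bound-split} we get $|D_0|\leq 4\ell^n-1$. An imaginary quadratic order is determined by its discriminant, so only finitely many orders $\oh_0$ are compatible with $(S_n,\ell)$. Call them $\oh_0^{(1)},\dots,\oh_0^{(m)}$.

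Orders with $D_K\in\{-3,-4\}$ are excluded from \cref{thm:existence} by its hypothesis. For these I would either check directly that they do not contribute, or observe that the same argument goes through with the finitely many extra kernels having bounded $\ell$-exponent. I expect this bookkeeping to be the only real nuisance in the proof.

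\textbf{Step 2: choice of $N_d$.} For each $i$, let $S^{(i)}_{d+1}$ be the Sylow $\ell$-subgroup of $\Cl(\oh^{(i)}_{d+1})$. This is a finite $\ell$-group, so it has exponent $\ell^{e_i}$ for some $e_i\geq 0$. Set
\begin{equation*}
  N_d=\max_{1\leq i\leq m} e_i .
\end{equation*}

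\textbf{Step 3: emptiness of $X$.} Suppose $r=\nu_\ell(k)\geq N_d$, and fix one of the compatible orders $\oh^{(i)}_0$. By \cref{kerpid} and diagram \eqref{kappa_diag}, the kernel of $G_{d+1}\to G_d$ is $\ker\pi_d=\ker\pi'_d$, which is an $\ell$-group of order $\ell$ since $d\geq1$. Hence \ref{lem:xequiv-sylow-hated-by-max} applies: $X\neq\emptyset$ if and only if $\ker\pi_d\cap\ell^r S^{(i)}_{d+1}\neq\{0\}$. Since $r\geq e_i$, we have $\ell^rS^{(i)}_{d+1}=\{0\}$, so $X=\emptyset$ for this order.

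\textbf{Conclusion.} $X$ is therefore empty for every compatible order. By \ref{cor:non-existence}, the volcano $(S_n,\ell,d)$ has no \explosive\ primes. Primes $p$ that are not split in $K$ do not give ordinary curves with CM by $\oh_0$, so the restriction to split primes in \cref{thm:existence} loses nothing.

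The result is deliberately weak: $N_d$ is not explicit, because it depends on the unknown $\ell$-exponents of the class groups of the finitely many orders of discriminant at most $4\ell^n-1$ in absolute value. A sharper bound would require controlling these exponents, for instance via \cref{lem:splitting} when the sequence splits, but this is not needed for the existence statement.
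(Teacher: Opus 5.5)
Your proposal is correct and is essentially the paper's proof. Both use finiteness of the compatible orders via \ref{disc_upper_bound-split}, take $N_d$ to be the largest $\ell$-adic valuation of the exponents of the groups $S_{d+1}$, and get emptiness of $X$ from \cref{lem:xequiv} followed by \ref{cor:non-existence}; your side remarks (orders in $\QQ(\sqrt{-1})$ or $\QQ(\sqrt{-3})$, non-split primes $p$, and the dependence of $N_d$ on $\ell$ and $n$) are points the paper leaves tacit and do not change the argument.
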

\begin{proof}
  By \ref{disc_upper_bound-split}, there are finitely many orders $\oh_0$ compatible with $(S_n, \ell)$, namely those whose discriminant is bounded by $4\ell^n-1$.
  Let $N_d$ be the maximum of the $\ell$-adic valuations of the exponents of the $\ell$-groups $S_{d+1}$ corresponding to $\oh_{d+1}$ for each of these orders; then the claim follows from \cref{lem:xequiv} and \ref{cor:non-existence}.
\end{proof}

We can say a bit more if we restrict to $\ell=2$:
\begin{thm}
  Let $\ell=2$.
  \begin{enumerate}[label=(\alph*), ref=Theorem~\thethm(\alph*)]
    \item\label{thm:split_two_converse_low_depth}
      If ($d=1$ and $r\geq 1$) or ($d=2,3$ and $r \geq d-1$), then the volcanoes $(S_n,2,d)$ and $(I_1,2,d)$ have no \explosive\ primes.
    \item\label{thm:split_two_converse_high_depth}
      Suppose that $n\geq 1$ is such that none of the class groups of the orders $\oh_0$ that are compatible with $(S_n,2)$ contain any elements of order $4$.
      If $d>3$ and $r \geq d-1$, then the volcano $(S_n,2,d)$ has no \explosive\ primes.
  \end{enumerate}
\end{thm}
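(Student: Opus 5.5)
We argue via \ref{cor:non-existence}: it suffices to show that for every order $\oh_0$ compatible with $(S_n,2)$ or $(I_1,2)$, the set $X$ of \cref{thm:existence} is empty. Such $\oh_0$ has odd conductor and $2$ is split or inert in $\oh_0$, so $2$ is split or inert in $\oh_K$ and \ref{kappa-class-split-inert-special} applies. The strategy is to split $S_{d+1}\cong\kappa_{d+1}\times S_0$ compatibly with $S_d\cong\kappa_d\times S_0$ using \cref{lem:splitting}. Then $\ker\pi_d$ becomes $(\ker\pi'_d)\times\{1\}$, and the criterion of \ref{lem:xequiv-sylow-hated-by-max} reads $\ker\pi_d\cap 2^r S_{d+1}\neq\{0\}$. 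Under the splitting, $2^rS_{d+1}\cong 2^r\kappa_{d+1}\times 2^rS_0$, so this intersection equals $(\ker\pi'_d\cap 2^r\kappa_{d+1})\times\{0\}$. Hence $X\neq\emptyset$ if and only if $\ker\pi'_d\cap 2^r\kappa_{d+1}\neq\{0\}$, and the problem is reduced to the explicit groups $\kappa_{d+1}$.

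The splitting of \eqref{ses_two} at level $d+1$ comes from \ref{ses-splitting-split-inert}. For part (a), $d\le 3$ gives $d+1\le 4$, but the proposition only guarantees splitting for $d+1\le 3$, i.e.\ $d\le 2$. So we get the splitting directly for $d=1,2$ and must handle $d=3$ separately (see below). For part (b), the hypothesis that no compatible class group has an element of order $4$ gives splitting at every level. \cref{lem:splitting} then provides the compatible splitting at level $d$.

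Next we check that $\ker\pi'_d\cap 2^r\kappa_{d+1}=\{0\}$ in each case.
\begin{itemize}
\item For $d=1$: $\kappa_2\cong\ZZ/2\ZZ$, and $r\ge1$ gives $2^r\kappa_2=0$.
\item For $d\ge2$: $\kappa_{d+1}\cong\ZZ/2^{d-1}\ZZ\times\ZZ/2\ZZ$ and $\ker\pi'_d$ is generated by $(2^{d-2},0)$. Since $r\ge d-1\ge1$, we get $2^r\kappa_{d+1}=2^r\ZZ/2^{d-1}\ZZ\times\{0\}=0$.
\end{itemize}
In fact here $2^rS_{d+1}$ is the whole group times $2^rS_0$ with $\kappa$-part trivial, so the intersection vanishes.

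The main obstacle is the case $d=3$ in part (a), where the splitting at level $4$ is not guaranteed when $\Cl(\oh_0)$ has elements of order $4$; the paper's example $\QQ(\sqrt{-39})$ is exactly this phenomenon. The first thing I would try is to avoid the splitting altogether and use \cref{cor:nonempty_x_iff}, since $\card\ker\pi'_3=2$ by \cref{kerpid}. It then suffices to show $\card{S_4/2^rS_4}\neq\card{S_3/2^rS_3}$. The level-$3$ sequence does split, so $S_3\cong(\ZZ/2\ZZ)^2\times S_0$ with $r\ge2$ and $\card{S_3/2^rS_3}=4\card{S_0/2^rS_0}$. For $S_4$, I would show that $\card{S_4/2^rS_4}=\card{S_3/2^rS_3}$ forces the kernel element to be a $2^r$-th multiple, and derive a contradiction from the $2$-rank count in the proof of \cref{ses-card-holds}: $S_4$ and $S_3$ have the same $2$-rank $\eta+1$.

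If this direct counting proves delicate, the fallback is to repeat the argument of \cref{ses-card-split} for the lift of the generator $(2,0)$ of $\ker\pi'_3\subseteq\kappa_4\cong\ZZ/4\ZZ\times\ZZ/2\ZZ$. If that generator were $2^r b$ with $r\ge2$, its image in $S_0$ would satisfy $4\varphi(b)=0$. I would then try to modify $b$ by an element of $\kappa_4$ so as to land in $\kappa_4$ itself, which has exponent $4$, giving $2^r b=0$ and a contradiction.
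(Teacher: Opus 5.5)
Your reduction and your computations for $d=1,2$ in (a) and for part (b) are correct, and they follow the paper's argument. You split $S_{d+1}\cong\kappa_{d+1}\times S_0$, use \cref{lem:splitting} to place $\ker\pi_d$ in $\kappa_{d+1}\times\{0\}$, and note that $2^r\kappa_{d+1}=0$. In (b) you rightly use \ref{ses-splitting-split-inert}, whereas the paper's text cites the ramified part. You are also right that \ref{ses-splitting-split-inert} gives splitting at level $d+1$ only for $d\le 2$. The paper's proof of (a) justifies the splitting by ``$d\le 3$'', so it has exactly the hole you found at $d=3$.

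The genuine gap is your plan for $d=3$. It cannot succeed, because that case of (a) is false as stated. Take the paper's own example $K=\QQ(\sqrt{-39})$, $\oh_0=\oh_K$. Here $2$ splits and the prime above $2$ has order $4$, so $\oh_0$ is compatible with $(S_4,2)$. The relevant groups are:
\begin{itemize}
\item $S_0\cong\ZZ/4\ZZ$;
\item $S_3\cong\ZZ/4\ZZ\times(\ZZ/2\ZZ)^2$, since level $3$ splits;
\item $S_4\cong\ZZ/8\ZZ\times(\ZZ/2\ZZ)^2$, by the remark after \cref{ses-splitting}.
\end{itemize}
For $r=2$, both $S_4/4S_4$ and $S_3/4S_3$ have order $16$. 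So \cref{cor:nonempty_x_iff} gives $X\neq\emptyset$, and \ref{cor:existence} gives a positive density of $4$-explosive primes for $(S_4,2,3)$.

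You can check this by hand. Put $\omega=(1+\sqrt{-39})/2$ and let $\pp$ be a prime above $11$; its class has order $4$. Then $\pp^4=(\alpha)$ with $\alpha=95+12\sqrt{-39}=83+24\omega\equiv 3+8\omega\pmod{16\oh_K}$. Hence $c=[\pp\cap\oh_4]$ satisfies $c^4=[1+8\omega]\neq 1$ and $c^8=1$. On the other hand $\alpha\equiv 3\pmod{8\oh_K}$, so $\pi_3(c)^4=1$. Thus $c\in X$ for $k=4$.

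The crater $I_1$ fails in the same way. Take $K=\QQ(\sqrt{-155})$, where $2$ is inert and $\Cl(\oh_K)\cong\ZZ/4\ZZ$. With $\omega=(1+\sqrt{-155})/2$ and $\pp$ above $3$, one has $\pp^4=(6+\omega)$ and $(6+\omega)^3=-525+88\omega\equiv 3+8\omega\pmod{16\oh_K}$. This puts the $2$-part of $[\pp\cap\oh_4]$ into $X$ for $k=4$.

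Accordingly, both of your routes break.
\begin{itemize}
\item The rank count yields no contradiction. $S_3$ and $S_4$ both have rank $3$, and the two quotients really are equal.
\item In the fallback, adding elements of $\kappa_4$ to $b$ never changes $\varphi(b)$. So $b$ cannot be moved into $\kappa_4$ unless $\varphi(b)=0$. The trick from \cref{ses-card-split} would need surjectivity of $S_4[4]\to S_0[4]$, and that is exactly the splitting that fails here. Since $\kappa_4$ has exponent $4$, every lift to $S_4$ of a generator of $S_0$ has order $8$.
\end{itemize}
What your argument, and the paper's, actually establishes is (a) for $d\in\{1,2\}$. The case $d=3$ holds only under the no-elements-of-order-$4$ hypothesis of (b), so (b) in fact holds for all $d\ge 3$.
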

\begin{proof}
  We start with an observation that applies to both parts (a) and (b).

  Let $\oh_0$ be any order that is compatible with $(V_0,2)$.
  The aim is to show that under the given conditions we have $X=\emptyset$, where $X$ is the set from \ref{cor:non-existence} and \cref{thm:existence}.
  By \cref{lem:xequiv}, this is equivalent to showing that
  \begin{equation*}
    \ker\pi_d\cap 2^r S_{d+1}=\{0\}.
  \end{equation*}
  We now make a crucial assumption: suppose that the short exact sequence
  \begin{equation*}
    0 \to \kappa_m \to S_m \to S_0 \to 0
  \end{equation*}
  is split for $m=d+1$.
  Then by \cref{lem:splitting}, so is the sequence for $m=d$ and we can identify $\ker \pi_d$ with $(\ker \pi'_d)\times \{0\}\subseteq\kappa_{d+1}\times S_0\cong S_{d+1}$.
  Therefore
  \begin{equation*}
    \ker \pi_d \cap 2^r S_{d+1}\subseteq \big((\ker\pi'_d)\times\{0\}\big)\cap \big(2^r \kappa_{d+1}\times 2^r S_0\big)
    =\big(\ker\pi'_d\cap 2^r\kappa_{d+1}\big)\times\{0\}.
  \end{equation*}
  The upshot is that, under the crucial splitting assumption made above, it suffices to prove that $\ker\pi'_d\cap2^r\kappa_{d+1}=\{0\}$.

  We now consider the situation in each part separately:
  \begin{enumerate}
    \item Since $d\leq 3$, we know from \ref{ses-splitting-split-inert} that the short exact sequence splits, so this assumption is satisfied.
      We now use the structure of $\kappa_d$ from \ref{kappa-class-split-inert-special}.

      If $d=1$ we have $\kappa_{d+1}=\kappa_2\cong\ZZ/2\ZZ$, so $2^r \kappa_{d+1}\cong 2^r\ZZ/2\ZZ=0$ if $r\geq 1$.

      If $d=2,3$ we have $\kappa_{d+1}\cong\ZZ/2^{d-1}\ZZ\times\ZZ/2\ZZ$, so $2^r\kappa_{d+1}\cong 2^r\ZZ/2^{d-1}\ZZ\times 2^r\ZZ/2\ZZ=0$ if $r\geq d-1$.

      In all cases we conclude that $X=\emptyset$ for all compatible orders $\oh_0$, so by \ref{cor:non-existence} we deduce that there are no \explosive\ primes.
    \item
      As $\Cl(\oh_0)$ is assumed to not have any elements of order $4$, \ref{ses-splitting-ramified} tells us that the short exact sequence splits.

      By \ref{kappa-class-split-inert-special} we have $\kappa_{d+1}\cong\ZZ/2^{d-1}\ZZ\times\ZZ/2\ZZ$, so we conclude as in part (a).
      \qedhere
  \end{enumerate}
\end{proof}

\begin{rem}
  \cref{prop:counterexample} is the case $d=1$, $n=2$, $k=2$ of \ref{thm:split_two_converse_low_depth}.
\end{rem}

\begin{rem}
  The condition on orders of elements in class groups of orders that we impose in \ref{thm:split_two_converse_high_depth} raises questions of existence/distribution of integers $n\geq 1$ with this property.

  Computationally, it appears that there are only six such values of $n$, namely $\{1,2,3,5,6,7\}$.
\end{rem}

\section{Computations and heuristics}\label{sect:computations}

Fix a prime $\ell$, $d>0$, and $k\geq1$.
Let $r = \nu_\ell(k)$.
The appearance of volcanoes $(V_0,\ell,d)$ listed in the following table is not decided by the results we have described thus far:
\begin{center}
  \begin{tabular}{lllll}
    \toprule
    $V_0$ & $\ell$  & $d$     & $n$                     & Condition on $r$ \\
    \midrule
    $I_1$ & $\ge 3$ & $\ge 1$ & --                      & $r \ge d$        \\
    $I_1$ & $2$     & $\ge 4$ & --                      & $r \ge d-1$      \\
    $R_2$ & $\ge 2$ & $\ge 1$ & --                      & $r \ge d+1$      \\
    $S_n$ & $\ge 3$ & $\ge 1$ & $\geq 1$                & $r \ge d$        \\
    $S_n$ & $2$     & $\ge 4$ & $\notin\{1,2,3,5,6,7\}$ & $r \ge d-1$      \\
    \bottomrule
  \end{tabular}
\end{center}

In the inert and ramified cases, we have identified conditions on a hypothetical order $\oh_0$ that would guarantee the existence of infinitely many \explosive\ primes.
These conditions relate to the structure and rank of the Sylow $\ell$-subgroups of the class groups of $\oh_0$ and some deeper orders $\oh_d$.
(A discussion of the behaviour of these ranks as $d$ varies can be found in Appendix \ref{sec:ranks}.)
\begin{prop}\label{heuristics-justification}
  Let $d > 0$, $k\geq1$ and $r = \nu_\ell(k)$.
  \begin{enumerate}[label=(\alph*), ref=Proposition~\theprop(\alph*)]
    \item\label{heuristics-justification-inert}
      Fix $\ell\geq 3$ and $r\geq d$.
      Suppose $\oh_0$ is an order compatible with $V = (I_1, \ell)$ such that
      \begin{equation}\label{I1_cond}
        S_0\cong\ZZ/\ell^N\ZZ\text{ for some $N\geq 0$ and $\rk(S_2) = \rk(S_0)$.}
      \end{equation}
      Then $X\neq\emptyset$ if and only if $N\geq r+1-d$.
    \item\label{heuristics-justification-ramified}
      Fix $\ell\geq 5$ and $r\geq d+1$.
      Suppose $\oh_0$ is an order compatible with $V = (R_2, \ell)$ such that
      \begin{equation}\label{R2_cond}
        S_0\cong\ZZ/\ell^N\ZZ\text{ for some $N\geq 0$, and $\rk(S_1) = \rk(S_0)$.}
      \end{equation}
      Then $X\neq\emptyset$ if and only if $N\geq r-d$.
  \end{enumerate}
\end{prop}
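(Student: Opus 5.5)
The plan is to reduce both parts to \cref{cor:nonempty_x_iff}. That corollary applies because $d>0$, and by \cref{kerpid} the group $\ker\pi'_d$ has order $\ell$. Moreover $\ker\pi_d=\ker\pi'_d$ by diagram \eqref{kappa_diag}, so $\pi_d\st S_{d+1}\to S_d$ is a surjection of $\ell$-groups whose kernel is cyclic of order $\ell$. Hence $X\neq\emptyset$ if and only if $\card{S_{d+1}/\ell^rS_{d+1}}=\card{S_d/\ell^rS_d}$. So everything comes down to pinning down the isomorphism type of $S_m$ for $m=d,d+1$. The approach is to show that, under \eqref{I1_cond} or \eqref{R2_cond}, every $S_m$ with $m\geq 1$ is cyclic, and then read off its order from \eqref{eq:ses}.

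\textbf{Orders.} From \eqref{eq:ses} we have $\card{S_m}=\card{S_0}\cdot\card{\kappa_m}$.
\begin{itemize}
\item In the inert case with $\ell\geq3$, \ref{kappa-class-split-inert-general} gives $\kappa_m\cong\ZZ/\ell^{m-1}\ZZ$, so $\card{S_m}=\ell^{N+m-1}$ for $m\geq 1$.
\item In the ramified case with $\ell\geq5$, \ref{kappa-class-ramified-general} gives $\kappa_m\cong\ZZ/\ell^m\ZZ$, so $\card{S_m}=\ell^{N+m}$.
\end{itemize}

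\textbf{Cyclicity.} This is where the rank hypothesis enters, and it is the step I expect to be the main obstacle.
\begin{itemize}
\item If $N=0$, then $S_m=\kappa_m$ is cyclic by the theorem just quoted, and nothing more is needed.
\item If $N\geq1$, then $\rk(S_0)=1$, and the hypothesis makes $S_2$ (inert), resp.\ $S_1$ (ramified), cyclic. This is the first level at which $\kappa_m$ is nontrivial; in the inert case $\kappa_1=1$, so $S_1\cong S_0$ automatically.
\end{itemize}
To propagate cyclicity to all $m$, I would invoke the rank analysis of Appendix~\ref{sec:ranks}. The expected input from there is that $\rk(S_m)$ is nondecreasing in $m$, that it is bounded by $\rk(S_0)+1$ since $\kappa_m$ is cyclic, and that once it equals $\rk(S_0)$ at the first nontrivial level it stays equal for all $m$.

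If the appendix does not state this in the needed form, my fallback is to compute $\rk(S_m)=\dim_{\FF_\ell} S_m/\ell S_m$ directly from \eqref{eq:ses}. A noncyclic $S_{m}$ would require the extension class of \eqref{eq:ses} to become ``more split'' as $m$ grows. By the remark after \cref{lem:splitting}, the extension for level $m$ maps to the one for level $m-1$ under $\pi'_{m-1}$. So I would argue that non-splitting of the extension at the first nontrivial level forces non-splitting, with the generator of $\kappa_m$ lying in $\ell S_m$, at every higher level.

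\textbf{Counting.} Once each $S_m$ is cyclic of the orders above, we get $\card{S_m/\ell^rS_m}=\ell^{\min(r,\log_\ell\card{S_m})}$.
\begin{itemize}
\item In the inert case, compare $\ell^{\min(r,N+d-1)}$ with $\ell^{\min(r,N+d)}$. These are equal exactly when $r\leq N+d-1$, that is $N\geq r+1-d$.
\item In the ramified case, compare $\ell^{\min(r,N+d)}$ with $\ell^{\min(r,N+d+1)}$. These are equal exactly when $r\leq N+d$, that is $N\geq r-d$.
\end{itemize}
Together with \cref{cor:nonempty_x_iff}, this gives both equivalences.
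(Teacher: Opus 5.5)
Your proposal is correct and is essentially the paper's own proof. The paper likewise combines \cref{kappa-classification} with \cref{class-group-ranks} to get $S_m\cong\ZZ/\ell^{N+m-1}\ZZ$, resp.\ $\ZZ/\ell^{N+m}\ZZ$. \cref{class-group-ranks} is exactly the rank propagation you anticipated: $\rk(S_m)=\rk(S_2)$ for $m\geq 2$ in the inert case, and $\rk(S_m)=\rk(S_1)$ for $m\geq 1$ in the ramified case. The paper then compares $\card{S_{d+1}/\ell^rS_{d+1}}$ with $\card{S_d/\ell^rS_d}$ via \cref{cor:nonempty_x_iff}.

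Two small remarks. Your count $\ell^{\min(r,N+d-1)}$ is the right one; the paper's displayed value $\ell^{N+d}$ for $r\geq N+d-1$ is a slip. Your separate $N=0$ case is harmless but vacuous: then $S_2=\kappa_2$, resp.\ $S_1=\kappa_1$, is nontrivial, so the rank hypothesis cannot hold.
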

\begin{proof}\
  \begin{enumerate}
    \item
      With our assumptions, it follows from \cref{kappa-classification} and \cref{class-group-ranks} that $S_d\cong\ZZ/\ell^{N+d-1}\ZZ$ for all $d\geq1$.
      Now
      \begin{equation*}
        \card{S_d/\ell^r S_d} = \begin{cases}
          \ell^{N+d} & \text{if } r\geq N+d-1, \\
          \ell^r     & \text{otherwise.}
        \end{cases}
      \end{equation*}
      We use \cref{cor:nonempty_x_iff} to conclude since $\card{S_{d+1}/\ell^r S_{d+1}} =\card{S_d/\ell^r S_d}$ if and only if $N\geq r+1-d$.
    \item The same as part (a), essentially replacing $d-1$ with $d$ throughout.\qedhere
  \end{enumerate}
\end{proof}

Since for $V_0=I_1$ or $R_2$ there are infinitely many orders $\oh_0$ that are compatible with any fixed $(V_0,\ell)$, it seems reasonable to hope that some of these will satisfy the respective conditions \eqref{I1_cond}, \eqref{R2_cond}, and hence there will be infinitely many \explosive\ primes for all $r$.

But actually constructing compatible orders in this generality appears out of reach currently.
In this section, we describe some conjectures on the distribution of class groups with the desired properties and the conditional results that we can obtain from them.
We also discuss computational evidence in favour of these conjectures.

\subsection{\cohlen\ heuristics}\label{subsect:cohen-lenstra}

In \cite{cohen-lenstra}, Cohen and Lenstra introduced a probability distribution for predicting the frequency of properties satisfied by class groups of number fields.
It is based on the heuristic that an abelian group should occur with probability that is inversely proportional to the number of automorphisms of the group.

In our situation, the formulation is as follows.
Let the sample space $\Omega$ be the set of all isomorphism classes of finite abelian $\ell$-groups and endow $\Omega$ with the discrete $\sigma$-algebra.
For $G\in\Omega$, define the (probability) measure
\begin{equation*}
  \mu_\text{CL}(\{G\}) = \frac{\eta_\infty(\ell)}{\card{\Aut G}}, \quad \text{ where } \quad \eta_\infty(\ell)=\prod_{k=1}^\infty \big(1 - \ell^{-k}\big).
\end{equation*}

Let $K$ denote an imaginary quadratic field with discriminant $D_K$ and ring of integers $\oh_K$.
For a ``reasonable'' random variable $h$, the \cohlen\ heuristics predict that the average of $h$ over the Sylow $\ell$-subgroup of the ideal class group $\Cl(\oh_K)$ is the $0$-average of $h$:
\begin{equation*}
  \lim_{X\to\infty} \frac{\displaystyle \sum_{\left|D_K\right|\leq X} h\big(\Cl(\oh_K)[\ell^\infty]\big)}{\displaystyle\sum_{\left|D_K\right|\leq X} 1} = M_0(h),
\end{equation*}
where the sums are over all imaginary quadratic fields of discriminant $\left|D_K\right|\leq X$.

\begin{rem}\label{local-cond}
  Instead of considering all number fields, one can restrict to a subset given by local conditions (for example, imposing that $\ell$ is inert).
  Wood conjectures (with supporting computations) that the distribution of number fields is invariant under such restrictions, see \cite[Conjecture 1.4]{wood2018}.
\end{rem}

Whilst there is strong numerical evidence in support of the heuristics, very few results have been proved.
One instance is the work of Davenport and Heilbronn (in \cite{dh}, predating Cohen and Lenstra): they show that for $h(G)=\card{G[3]}$, the average of $h$ over the class groups of imaginary quadratic fields is $2$.
Cohen and Lenstra then computed $M_0(h) = 2$, confirming the heuristic's prediction in this case.

Before we move on, we want to know the probability with which cyclic groups of exponent at least $e\geq1$ appear.
\begin{lem}\label{lem:prob}
  Fix $e\geq1$.
  The probability that a group is of the form $\ZZ/\ell^N\ZZ$ where $N\geq e$ is
  \begin{equation*}
    \frac{\eta_\infty(\ell)}{\ell-1}\frac{\ell^{2-e}}{\ell-1}.
  \end{equation*}
\end{lem}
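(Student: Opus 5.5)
The plan is to compute the probability directly from the definition of $\mu_\text{CL}$. The event in question is the disjoint union of the singletons $\{\ZZ/\ell^N\ZZ\}$ for $N\geq e$, and these are pairwise non-isomorphic. By countable additivity it equals
\begin{equation*}
  \sum_{N\geq e}\mu_\text{CL}\big(\{\ZZ/\ell^N\ZZ\}\big)=\sum_{N\geq e}\frac{\eta_\infty(\ell)}{\card{\Aut(\ZZ/\ell^N\ZZ)}}.
\end{equation*}
The key input is the classical fact that $\Aut(\ZZ/\ell^N\ZZ)\cong(\ZZ/\ell^N\ZZ)^\times$. An automorphism of a cyclic group is determined by the image of a generator, and that image must be another generator. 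Hence for $N\geq 1$ we have $\card{\Aut(\ZZ/\ell^N\ZZ)}=\varphi(\ell^N)=\ell^{N-1}(\ell-1)$. This is valid since $e\geq 1$, so the trivial group ($N=0$) is excluded.

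Substituting, the sum becomes
\begin{equation*}
  \frac{\eta_\infty(\ell)}{\ell-1}\sum_{N\geq e}\ell^{1-N}.
\end{equation*}
This is a geometric series with first term $\ell^{1-e}$ and ratio $\ell^{-1}<1$. Its value is
\begin{equation*}
  \frac{\ell^{1-e}}{1-\ell^{-1}}=\frac{\ell^{2-e}}{\ell-1}.
\end{equation*}
Multiplying by the prefactor gives exactly the stated expression $\frac{\eta_\infty(\ell)}{\ell-1}\frac{\ell^{2-e}}{\ell-1}$.

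There is no serious obstacle. The only points needing care are using $N\geq e\geq 1$ so that the formula for $\varphi(\ell^N)$ applies, and noting that all terms are nonnegative, which justifies summing the series termwise. The measure is already normalised by the factor $\eta_\infty(\ell)$ in the definition, so no further normalisation is needed.
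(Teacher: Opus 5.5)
Your proposal is correct and follows essentially the same route as the paper. The paper quotes the point mass $\frac{\eta_\infty(\ell)}{\ell-1}\ell^{1-N}$ of $\ZZ/\ell^N\ZZ$ from a reference, whereas you derive it directly from $\card{\Aut(\ZZ/\ell^N\ZZ)}=\ell^{N-1}(\ell-1)$; both then sum the same geometric series over $N\geq e$.
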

\begin{proof}
  From \cite[Theorem 10.6]{ch-calc}, the probability of the group $\ZZ/\ell^N\ZZ$, $N\geq1$, is
  \begin{equation*}
    \frac{\eta_\infty(\ell)}{\ell-1} \ell^{1-N}.
  \end{equation*}
  So our desired probability is
  \begin{equation*}
    \frac{\eta_\infty(\ell)}{\ell-1}\sum_{i=e}^\infty \ell^{1-i} = \frac{\eta_\infty(\ell)}{\ell-1}\frac{\ell^{2-e}}{\ell-1}.\qedhere
  \end{equation*}
\end{proof}

\subsection{Extending the heuristics}

We modify the \cohlen\ heuristics with the aim of investigating the probability of finding a maximal order $\oh_0=\oh_K$ satisfying the conditions required in \cref{heuristics-justification}.
The modifications enable us to include information about the class groups of orders of index $\ell^d$ in $\oh_K$.
To this end, we take $\Omega$ to be the set of all the pairs $(G, \xi)$ (up to isomorphism), where $G$ is a finite abelian $\ell$-group and $\xi\in\Ext_\ZZ^1(G, \ZZ/\ell\ZZ)$.
We define the (probability) measure
\begin{equation*}
  \mu_\text{CL'}(\{(G, \xi)\}) = \frac{\eta_\infty(\ell)}{\card{\Aut G} \cdot \card{\Ext_\ZZ^1(G, \ZZ/\ell\ZZ)}}.
\end{equation*}

Thus, for an imaginary quadratic field $K$ we take $\oh_0=\oh_K$ and
\begin{itemize}
  \item in the case $V_0=I_1$, $\ell$ is inert in $\oh_0$, and we are considering the pair $(S_0,\xi)$ where $\xi\in\Ext_\ZZ^1(S_0,\kappa_2)$ is the class defined by $\pi'_2$;
  \item in the case $V_0=R_2$, $\ell$ is ramified in $\oh_0$, and we are considering the pair $(S_0,\xi)$ where $\xi\in\Ext_\ZZ^1(S_0,\kappa_1)$ is the class defined by $\pi'_1$.
\end{itemize}
Note that we are using \cref{local-cond} here to assume invariance under local conditions.

The conditions in \eqref{I1_cond} and \eqref{R2_cond} about ranks being equal is then precisely the statement that $\xi\neq0$.
We can now give the probability that the conditions in \cref{heuristics-justification} are satisfied:
\begin{lem}\label{lem:prob-ext}
  Fix $e\geq1$.
  The probability that the pairs $(\ZZ/\ell^N\ZZ, \xi)$ occur, where $\xi\neq0$ and $N\geq e$, is
  \begin{equation*}
    \eta_\infty(\ell) \frac{\ell^{1-e}}{\ell-1}.
  \end{equation*}
\end{lem}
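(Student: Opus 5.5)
We compute the probability by summing $\mu_\text{CL'}$ over the relevant pairs. For $G=\ZZ/\ell^N\ZZ$ with $N\geq 1$ we need $\card{\Aut G}$, $\card{\Ext^1_\ZZ(G,\ZZ/\ell\ZZ)}$, and the number of isomorphism classes of pairs $(G,\xi)$ with $\xi\neq 0$. We have $\card{\Aut G}=\varphi(\ell^N)=\ell^{N-1}(\ell-1)$. From the free resolution $0\to\ZZ\xrightarrow{\ell^N}\ZZ\to G\to 0$ we get $\Ext^1_\ZZ(G,\ZZ/\ell\ZZ)\cong(\ZZ/\ell\ZZ)/\ell^N(\ZZ/\ell\ZZ)=\ZZ/\ell\ZZ$, which has $\ell$ elements. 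So there are $\ell-1$ nonzero classes $\xi$.

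The step that needs care is what ``pairs up to isomorphism'' means. If isomorphisms of pairs are allowed to act through $\Aut G$, then $\Aut G=(\ZZ/\ell^N\ZZ)^\times$ acts on $\Ext^1$ through reduction mod $\ell$, which acts transitively on the $\ell-1$ nonzero classes. All nonzero $\xi$ would then collapse to one class, and the sum would not produce the claimed constant. So I will read $\Omega$ as the set of pairs with $G$ taken up to isomorphism and $\xi$ an actual element of $\Ext^1(G,\ZZ/\ell\ZZ)$. This is consistent with the normalisation: summing $\mu_\text{CL'}$ over all $\xi$ for fixed $G$ recovers $\mu_\text{CL}(\{G\})$, so $\mu_\text{CL'}$ is a probability measure.

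Under this reading, the contribution from $G=\ZZ/\ell^N\ZZ$ with $\xi\neq0$ is
\begin{equation*}
  (\ell-1)\cdot\frac{\eta_\infty(\ell)}{\ell^{N-1}(\ell-1)\cdot \ell}=\eta_\infty(\ell)\,\ell^{-N}.
\end{equation*}
Equivalently, the conditional probability that $\xi\neq0$ given $G$ is $(\ell-1)/\ell$, and the $G$-marginal $\eta_\infty(\ell)\ell^{1-N}/(\ell-1)$ is the one used in the proof of \cref{lem:prob}. Summing the geometric series over $N\geq e$ gives
\begin{equation*}
  \eta_\infty(\ell)\sum_{N\ge e}\ell^{-N}=\eta_\infty(\ell)\frac{\ell^{-e}}{1-\ell^{-1}}=\eta_\infty(\ell)\frac{\ell^{1-e}}{\ell-1},
\end{equation*}
which is the claimed formula. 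Since $e\geq 1$, we only sum over $N\ge1$, so the trivial group, for which $\Ext^1=0$ and no nonzero $\xi$ exists, never enters. The main obstacle is therefore only the interpretive point about isomorphism classes; the arithmetic is routine.
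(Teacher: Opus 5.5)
Your proposal is correct and is essentially the paper's argument. The paper multiplies the total mass of the groups $\ZZ/\ell^N\ZZ$ with $N\geq e$, taken from \cref{lem:prob}, by the proportion $(\ell-1)/\ell$ of nonzero $\xi$; you apply the same factor to each $N$ before summing the geometric series. Your reading of $\Omega$, with $\xi$ an actual element of $\Ext^1$ rather than an orbit under $\Aut G$, is the one implicit in the paper's normalisation of $\mu_\text{CL'}$.
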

\begin{proof}
  Since $\card{\Ext_\ZZ^1(\ZZ/\ell^N\ZZ, \ZZ/\ell\ZZ)} = \ell$, the proportion of $\xi$ that are non-zero is $(\ell - 1)/\ell$.
  Using \cref{lem:prob}, we conclude that the probability is
  \begin{equation*}
    \left(\frac{\eta_\infty(\ell)}{\ell-1}\frac{\ell^{2-e}}{\ell-1}\right) \left(\frac{\ell-1}{\ell}\right) = \eta_\infty(\ell)\frac{\ell^{1-e}}{\ell-1}.\qedhere
  \end{equation*}
\end{proof}

\begin{thm}
  Assume that the modified \cohlen\ heuristics hold.
  Fix $d > 0$, $k\geq1$, and let $r = \nu_\ell(k)$.
  \begin{enumerate}[label=(\alph*), ref=Theorem~\thethm(\alph*)]
    \item\label{heuristics-inert}
      If $\ell\geq3$ and $r\geq d$, the volcano $(I_1, \ell, d)$ has infinitely many \explosive\ primes.
    \item\label{heuristics-ramified}
      If $\ell\geq5$ and $r\geq d+1$, the volcano $(R_2, \ell, d)$ has infinitely many \explosive\ primes.
  \end{enumerate}
\end{thm}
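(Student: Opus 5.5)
Under the modified \cohlen\ heuristics, a positive proportion of imaginary quadratic fields with the right local behaviour at $\ell$ satisfy the hypotheses of \cref{heuristics-justification}. We take the resulting maximal order as $\oh_0$ and conclude via \ref{cor:existence}. We treat part (a); part (b) is identical with $\kappa_1$ in place of $\kappa_2$ and the thresholds shifted by one.

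For (a), fix $\ell\geq 3$, $d>0$ and $r\geq d$, and set $e=r+1-d\geq 1$. Consider the family of imaginary quadratic fields $K$ with $D_K<-4$ and $\left(\frac{D_K}{\ell}\right)=-1$. By Dirichlet's theorem this family is infinite; indeed it has positive density among fundamental discriminants. For each such $K$ put $\oh_0=\oh_K$. This order is compatible with $(I_1,\ell)$, and we attach to it the pair $(S_0,\xi)$, where $\xi\in\Ext^1_\ZZ(S_0,\kappa_2)$ is the extension class cut out by $\pi'_2$. Here $\kappa_2\cong\ZZ/\ell\ZZ$ by \ref{kappa-class-split-inert-general}. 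The modified heuristic, together with the invariance under local conditions from \cref{local-cond}, says that these pairs are distributed according to $\mu_{\mathrm{CL'}}$. So by \cref{lem:prob-ext} the proportion of fields in the family with $S_0\cong\ZZ/\ell^N\ZZ$, $N\geq e$ and $\xi\neq 0$ tends to $\eta_\infty(\ell)\,\ell^{1-e}/(\ell-1)$. This is strictly positive, because $\eta_\infty(\ell)>0$ for $\ell\geq 2$ (the product converges). In particular at least one such field $K$ exists; in fact there are infinitely many.

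For such a $K$, the condition $\xi\neq 0$ is exactly the rank condition $\rk(S_2)=\rk(S_0)$ in \eqref{I1_cond}, as noted before \cref{lem:prob-ext}. Hence \ref{heuristics-justification-inert} applies, and since $N\geq r+1-d$ it gives $X\neq\emptyset$. Then \ref{cor:existence}, via \cref{thm:existence}, yields a positive-density, hence infinite, set of \explosive\ primes for $(I_1,\ell,d)$.

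For (b), with $\ell\geq 5$ and $r\geq d+1$, set $e=\max(1,r-d)$. Restrict to fields in which $\ell$ ramifies and $D_K<-4\ell$, for instance $K=\QQ(\sqrt{-\ell q})$ with $q\geq 5$ prime as in the proof of \ref{existence_depth_0-ramified2}. By \ref{disc_upper_bound-ramified} the prime above $\ell$ is then non-principal, so $\oh_K$ is compatible with $(R_2,\ell)$. Here $\kappa_1\cong\ZZ/\ell\ZZ$ by \ref{kappa-class-ramified-general}, so the same probability computation applies. Then \ref{heuristics-justification-ramified} gives $X\neq\emptyset$ since $N\geq r-d$, and we conclude as in (a).

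The main obstacle is justifying that the heuristic applies to this restricted family. The family is defined by a local condition at $\ell$, and $\xi$ is defined using the ring class field of conductor $\ell^2$ (resp.\ $\ell$), so we must take the modified heuristic to hold in the form "the limiting proportion over the locally restricted family equals the $\mu_{\mathrm{CL'}}$-measure". The remaining steps, namely that a positive limiting proportion forces infinitely many fields and the inequalities between $N$, $r$ and $d$, are routine.
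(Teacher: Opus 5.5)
Your proposal is correct and follows the paper's own argument: apply \cref{lem:prob-ext} with $e=r+1-d$ (resp.\ $e=r-d$) to get a positive proportion of compatible maximal orders satisfying \eqref{I1_cond} (resp.\ \eqref{R2_cond}), then invoke \cref{heuristics-justification} and conclude by \ref{cor:existence}. Your additional care is sound but does not change the route. In part (b) you discard the finitely many fields with $D_K\geq -4\ell$, so that every remaining order is genuinely compatible with $(R_2,\ell)$; the paper instead remarks that the single principally ramified field has negligible effect. You also check that $\xi\neq 0$ matches the rank condition because $S_0$ is cyclic of order at least $\ell$.
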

\begin{proof}\
  \begin{enumerate}
    \item
      Using \cref{lem:prob-ext} with $e = r + 1 - d$, the probability that an imaginary quadratic field $K$ where $\ell$ is inert has ideal class group satisfying \eqref{I1_cond} is
      \begin{equation*}
        \eta_\infty(\ell)\frac{\ell^{d-r}}{\ell-1}.
      \end{equation*}
      Therefore, for any $r\geq d$, we can find an order compatible with $(I_1, \ell, d)$ by \cref{heuristics-justification}.
      The result then follows from \ref{cor:existence}.
    \item
      Similar to part (a), again replacing $d-1$ with $d$ throughout.
      Note that we do not distinguish between $R_1$ and $R_2$ when considering our local conditions, but this has a negligible effect on the probabilities as, by \ref{disc_upper_bound-ramified}, there is only one field where $\ell$ ramifies principally.\qedhere
  \end{enumerate}
\end{proof}

\subsection{Computations}

To see if our modified heuristics are ``reasonable'', we establish some computational evidence supporting them.
We build on the computations done in \cite{clgrp} to count the number of imaginary quadratic fields $K$ with discriminant $\abs{D_K}<2^{37}$ such that $\oh_0=\oh_K$ satisfies \eqref{I1_cond}, \eqref{R2_cond} for various $\ell, r$ and $d$.
Our code is available at \cite{code}; we used the University of Melbourne's High Performance Computing system ``Spartan'' \cite{spartan} to perform our computations.

\begin{figure}[h!]
  \centering
  \subfloat[Values of $i_{\ell, e}(x)$.]{\includegraphics[page=1]{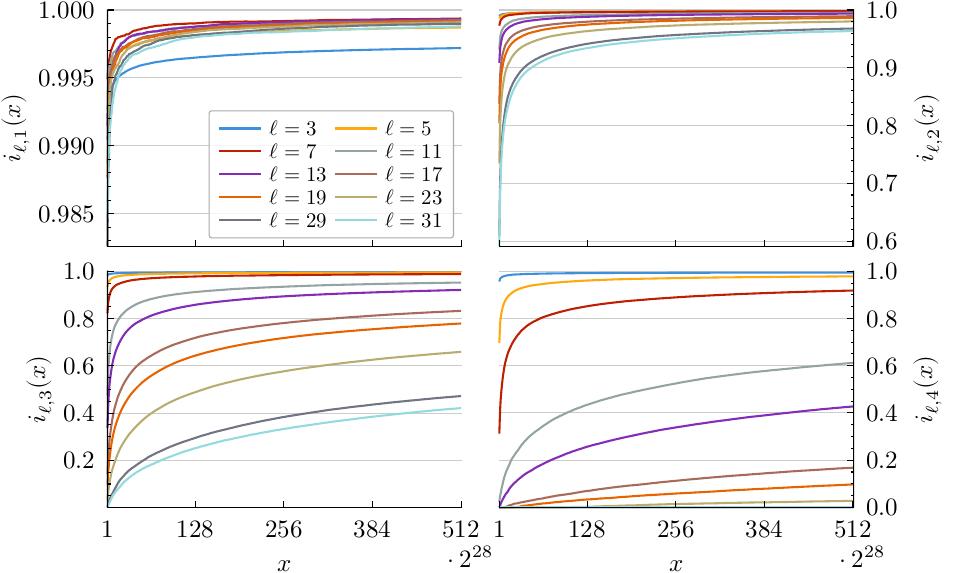}\label{fig:inert}} \\
  \subfloat[Values of $r_{\ell, e}(x)$.]{\includegraphics[page=2]{graphs.pdf}\label{fig:ramified}}
  \caption{Computational evidence for the modified \cohlen\ heuristics.}
\end{figure}

To present the results we introduce two functions.
For $e = r + 1 - d\geq1$ define $i_{\ell, e}(x)$ to be the proportion of imaginary quadratic fields $K$ with discriminant $\abs{D_K}<x$ where $\oh_K$ is compatible with $V = (I_1, \ell)$, such that \eqref{I1_cond} is satisfied, divided by the probability given in \cref{lem:prob-ext}.
We define $r_{\ell, e}(x)$ (for $e = r - d$) analogously for $V = (R_2, \ell)$, instead satisfying \eqref{R2_cond}.

If our heuristics hold, we would expect both of these functions to approach $1$ as $x$ grows.
We observe this behaviour (albeit with slow convergence for larger primes $\ell$ when $e = 3,4$) in \cref{fig:inert,fig:ramified}.

\appendix

\section{Relating the class groups of nested orders}
Let $K$ be an imaginary quadratic field.
Let $\oh_m$ be the order of $K$ of conductor $m\in\ZZ_{\geq 1}$.

The ring map $\ZZ\to\oh_K\to\oh_K/m\oh_K$ gives rise to an injective group homomorphism
\begin{equation*}
  (\ZZ/m\ZZ)^\times \hookrightarrow (\oh_K/m\oh_K)^\times.
\end{equation*}
Let $\kappa_m$ denote its cokernel.
By \cite[Equations (7.25) and (7.27)]{cox}, $\kappa_m$ is the kernel of the canonical surjective homomorphism $\Cl(\oh_m)\to \Cl(\oh_K)$; in other words we have two short exact sequences
\begin{equation}\label{eq:ses1}
  1\to (\ZZ/m\ZZ)^\times \to (\oh_K/m\oh_K)^\times \to \kappa_m\to 1
\end{equation}
and
\begin{equation}\label{eq:ses2}
  1\to \kappa_m\to \Cl(\oh_m)\to \Cl(\oh_K)\to 1.
\end{equation}

Now let $n\in\ZZ_{\geq 1}$ be coprime to $m$ and consider the orders $\oh_n$ and $\oh_{mn}$.
The following diagram allows us to identify the kernel of the surjective map $\kappa_{mn}\to \kappa_m$ with $\kappa_n$:
\begin{equation*}
  \begin{tikzcd}[column sep=2em,row sep=1.5em,trim left=-8cm]
    & 1 \arrow[d] & 1 \arrow[d] & 1 \arrow[d] & \\
    1 \arrow[r]
    & (\ZZ/n\ZZ)^\times \arrow[r] \arrow[d]
    & (\oh_K/n\oh_K)^\times \arrow[r] \arrow[d]
    & \kappa_n \arrow[d] \arrow[dddll, fashionfuchsia, dotted, out=0, in=180, looseness=2.2, "" description]
    & \\
    1 \arrow[r]
    & (\ZZ/mn\ZZ)^\times \arrow[r] \arrow[d]
    & (\oh_K/mn\oh_K)^\times  \arrow[r] \arrow[d]
    & \kappa_{mn} \arrow[r] \arrow[d]
    & 1 \\
    1 \arrow[r]
    & (\ZZ/m\ZZ)^\times \arrow[r] \arrow[d]
    & (\oh_K/m\oh_K)^\times  \arrow[r] \arrow[d]
    & \kappa_m \arrow[r] \arrow[d]
    & 1 \\
    & 1
    & 1
    & 1
  \end{tikzcd}
\end{equation*}
One should imagine building this diagram by starting with the short exact sequence \eqref{eq:ses1} for $mn$, then placing the one for $m$ underneath and taking the two leftmost vertical maps to be the canonical surjections.
This induces the third vertical map $\kappa_{mn}\to\kappa_m$.
Then it is a matter of determining the kernels of the vertical maps, which are clear for the two left maps, and follows from the Snake Lemma for the third.

We now put together the short exact sequences \eqref{eq:ses2} for $mn$ and for $m$ into another diagram:
\begin{equation*}
  \begin{tikzcd}[column sep=2em,row sep=1.5em,trim left=-5cm]
    & 1 \arrow[d] & 1 \arrow[d] &  & \\
    1 \arrow[r]
    & \kappa_n \arrow[r] \arrow[d]
    & \phantom{\kappa_n} \arrow[r] \arrow[d]
    & 1 \arrow[d]
    & \\
    1 \arrow[r]
    & \kappa_{mn} \arrow[r] \arrow[d]
    & \Cl(\oh_{mn})  \arrow[r] \arrow[d]
    & \Cl(\oh_K) \arrow[r] \arrow[d, "="]
    & 1 \\
    1 \arrow[r]
    & \kappa_m \arrow[r] \arrow[d]
    & \Cl(\oh_m)  \arrow[r] \arrow[d]
    & \Cl(\oh_K) \arrow[r] \arrow[d]
    & 1 \\
    & 1
    & 1
    & 1
  \end{tikzcd}
\end{equation*}
We know from above that the kernel in the first column is $\kappa_n$, and the fact that the rightmost vertical map is the identity on $\Cl(\oh_K)$ allows us to identify the missing kernel in the second column with $\kappa_n$.

We have therefore proved:
\begin{prop}\label{kappa_n}
  The kernel of the canonical surjective map $\Cl(\oh_{mn})\to\Cl(\oh_m)$ can be identified with $\kappa_n$, in other words that there is a short exact sequence
  \begin{equation*}
    1 \to \kappa_{n} \to \Cl(\oh_{mn}) \to \Cl(\oh_m) \to 1,
  \end{equation*}
  where $\kappa_n$ is defined by
  \begin{equation*}
    1\to (\ZZ/n\ZZ)^\times \to \big(\oh_K/n\oh_K\big)^\times \to \kappa_n\to 1.
  \end{equation*}
\end{prop}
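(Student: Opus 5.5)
The plan is to obtain both identifications from two applications of the Snake Lemma, in the order the diagrams above suggest. The only external input is the pair of short exact sequences \eqref{eq:ses1} and \eqref{eq:ses2} from \cite[Equations (7.25) and (7.27)]{cox}, used for conductors $m$ and $mn$.

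First I would stack sequence \eqref{eq:ses1} for $mn$ on top of the one for $m$, with the two leftmost vertical maps the reductions $(\ZZ/mn\ZZ)^\times\to(\ZZ/m\ZZ)^\times$ and $(\oh_K/mn\oh_K)^\times\to(\oh_K/m\oh_K)^\times$. These commute with the horizontal inclusions, so they induce a map $\kappa_{mn}\to\kappa_m$. Since $\gcd(m,n)=1$, the Chinese Remainder Theorem gives
\[
(\ZZ/mn\ZZ)^\times\cong(\ZZ/m\ZZ)^\times\times(\ZZ/n\ZZ)^\times,\qquad (\oh_K/mn\oh_K)^\times\cong(\oh_K/m\oh_K)^\times\times(\oh_K/n\oh_K)^\times,
\]
compatibly with the inclusion $\ZZ\to\oh_K$. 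So both left vertical maps are surjective, with kernels $(\ZZ/n\ZZ)^\times$ and $(\oh_K/n\oh_K)^\times$ respectively, and the map between these kernels is the natural inclusion. The Snake Lemma then gives an exact sequence
\[
1\to(\ZZ/n\ZZ)^\times\to(\oh_K/n\oh_K)^\times\to\ker(\kappa_{mn}\to\kappa_m)\to 1.
\]
Here the final $1$ comes from the vanishing of the cokernel of the middle vertical map. Hence $\ker(\kappa_{mn}\to\kappa_m)\cong\kappa_n$, and the induced map $\kappa_{mn}\to\kappa_m$ is surjective.

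Next I would stack \eqref{eq:ses2} for $mn$ over \eqref{eq:ses2} for $m$. The middle vertical map is the natural surjection $\Cl(\oh_{mn})\to\Cl(\oh_m)$, $[\mathfrak a]\mapsto[\mathfrak a\oh_m]$, and the right vertical map is $\id_{\Cl(\oh_K)}$. Because the right map is an isomorphism, the Snake Lemma gives $\ker(\Cl(\oh_{mn})\to\Cl(\oh_m))\cong\ker(\kappa_{mn}\to\kappa_m)$. Combined with the first step, this is $\kappa_n$, which is the claimed short exact sequence.

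The step I expect to be the main obstacle is justifying that the left square of the second diagram commutes. That is, the map $\kappa_{mn}\to\kappa_m$ built in the first step must agree with the restriction of $\Cl(\oh_{mn})\to\Cl(\oh_m)$ under Cox's embeddings $\kappa_\bullet\hookrightarrow\Cl(\oh_\bullet)$. I would make those embeddings explicit. For $\beta\in\oh_K$ prime to $mn$, the map $(\oh_K/mn\oh_K)^\times\to\Cl(\oh_{mn})$ sends $[\beta]$ to the class of $\beta\oh_K\cap\oh_{mn}$, and the analogous formula holds for $m$. Now $\beta$ represents a class in $(\oh_K/mn\oh_K)^\times$ that reduces to $[\beta]\in(\oh_K/m\oh_K)^\times$. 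So it suffices to check the equality $(\beta\oh_K\cap\oh_{mn})\oh_m=\beta\oh_K\cap\oh_m$. This holds because ideals prime to the conductor correspond bijectively to $\oh_K$-ideals prime to it, via $\mathfrak a\mapsto\mathfrak a\oh_K$ (\cite[Proposition 7.20]{cox}), and both sides extend to $\beta\oh_K$. Surjectivity of $(\oh_K/mn\oh_K)^\times\to\kappa_{mn}$ then reduces commutativity on $\kappa_{mn}$ to this check on representatives.
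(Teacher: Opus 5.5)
Your proposal is correct and follows the paper's own argument: stack \eqref{eq:ses1} for $mn$ over the one for $m$ and apply the Snake Lemma, then do the same with \eqref{eq:ses2}. Along the way you usefully make explicit two points the paper leaves implicit, namely the CRT computation of the kernels of the two left vertical maps, and the commutativity of the square comparing $\kappa_{mn}\to\kappa_m$ with $\Cl(\oh_{mn})\to\Cl(\oh_m)$.

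One minor slip: the final $1$ in your first Snake Lemma sequence comes from the vanishing of the cokernel of the \emph{left} map $(\ZZ/mn\ZZ)^\times\to(\ZZ/m\ZZ)^\times$, not the middle one. The vanishing of the middle cokernel is what gives surjectivity of $\kappa_{mn}\to\kappa_m$; since both cokernels vanish by CRT, nothing breaks.
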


Note that we may replace the middle group in this last sequence by $\big(\oh_m/n\oh_m\big)^\times$:
\begin{lem}\label{order_mod_integer}
  Given that $\oh_m$ has conductor $m$ coprime to $n$, the inclusion $\oh_m\hookrightarrow\oh_K$ induces a ring isomorphism
  \begin{equation*}
    \oh_m/n\oh_m \cong \oh_K/n\oh_K.
  \end{equation*}
  In particular, we have a group isomorphism $(\oh_m/n\oh_m)^\times\cong (\oh_K/n\oh_K)^\times$.
\end{lem}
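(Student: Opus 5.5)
The plan is to show that the natural ring map $\oh_m/n\oh_m\to\oh_K/n\oh_K$, induced by the inclusion $\oh_m\hookrightarrow\oh_K$, is bijective. Write $\oh_m=\ZZ+m\oh_K$. This is a $\ZZ$-submodule of $\oh_K$ of index $m$, and both $\oh_m$ and $\oh_K$ are free $\ZZ$-modules of rank $2$. So both quotients $\oh_m/n\oh_m$ and $\oh_K/n\oh_K$ are finite rings of cardinality $n^2$. It therefore suffices to prove either injectivity or surjectivity. I would check both directly, since each is a one-line argument using $\gcd(m,n)=1$.

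\textbf{Surjectivity.} Choose integers $u,v$ with $um+vn=1$. Any $x\in\oh_K$ can then be written as
\begin{equation*}
  x = umx + vnx .
\end{equation*}
Here $umx\in m\oh_K\subseteq\oh_m$ and $vnx\in n\oh_K$. Hence $x\equiv umx \pmod{n\oh_K}$, so the class of $x$ lies in the image of $\oh_m$.

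\textbf{Injectivity.} Suppose $y\in\oh_m\cap n\oh_K$; we want $y\in n\oh_m$. Write $y=nz$ with $z\in\oh_K$. Then $mz\in m\oh_K\subseteq\oh_m$. Also $nz=y\in\oh_m$. Hence
\begin{equation*}
  z = u(mz)+v(nz)\in\oh_m ,
\end{equation*}
and so $y=nz\in n\oh_m$. The only point needing care is this injectivity step, namely that $\oh_m\cap n\oh_K=n\oh_m$; the Bézout trick handles it.

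The map is a ring homomorphism because it is induced by a ring inclusion. Being bijective, it is a ring isomorphism. Restricting to unit groups then gives the group isomorphism $(\oh_m/n\oh_m)^\times\cong(\oh_K/n\oh_K)^\times$.
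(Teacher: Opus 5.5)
Your proof is correct and follows essentially the same route as the paper: both check the kernel and the surjectivity of the induced map directly, using $\gcd(m,n)=1$ (B\'ezout for surjectivity, coprimality to get $\oh_m\cap n\oh_K=n\oh_m$). The only difference is that the paper writes elements in the explicit $\ZZ$-bases $\{1,\omega_K\}$ and $\{1,m\omega_K\}$ and compares coefficients, whereas your argument is coordinate-free and uses only $m\oh_K\subseteq\oh_m$.
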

\begin{proof}
  This is straightforward given that $\oh_K=\ZZ+\ZZ \omega_K$ and $\oh_m=\ZZ+\ZZ m\omega_K$.

  Let $\varphi$ denote the ring homomorphism $\oh_m/n\oh_m\to\oh_K/n\oh_K$.
  \begin{itemize}
    \item For the kernel, suppose $x+ym\omega_K\in n\oh_K$, that is $x+ym\omega_K=n(a+b\omega_K)$, then
      \begin{equation*}
        x-na=(nb-ym)\omega_K=0,
      \end{equation*}
      so that $x=na$ and $ym=nb$.
      Since $\gcd(m,n)=1$ we must have $n\mid y$, so $x+ym\omega_K\in n\oh_m$.
    \item For surjectivity, consider an arbitrary element $a+b\omega_K+n\oh_K\in \oh_K/n\oh_K$.
      Since $\gcd(m,n)=1$, there exist $u,v\in\ZZ$ such that $um-vn=b$.
      Then
      \begin{equation*}
        \varphi(a+um\omega_K+n\oh_0)=a+um\omega_K+n\oh_K=a+(b+vn)\omega_K+n\oh_K=a+b\omega_K+n\oh_K.\qedhere
      \end{equation*}
  \end{itemize}
\end{proof}

\section{Group structure of $(\oh_K/\ell^d\oh_K)^\times$}

Fix a prime $\ell$.

The group structure of $(\ZZ/\ell^d\ZZ)^\times$ is a classical result going back to Euler.
We describe it using explicit generators and isomorphisms:
\begin{align*}
  (\ZZ/\ell^d\ZZ)^\times & =\langle g\rangle \to \ZZ/\ell^{d-1}\ZZ\times \ZZ/(\ell-1)\ZZ \text{ given by } g\mapsto (1,1) \text{ if }\ell\geq 3,                           \\
  (\ZZ/2\ZZ)^\times      & =\langle 1\rangle \to 0 \text{ given by } 1\mapsto 0,                                                                                           \\
  (\ZZ/2^2\ZZ)^\times    & =\langle -1\rangle \to \ZZ/2\ZZ \text{ given by }-1\mapsto 1,                                                                                   \\
  (\ZZ/2^d\ZZ)^\times    & =\langle 5\rangle\times\langle -1\rangle \to \ZZ/2^{d-2}\ZZ\times\ZZ/2\ZZ \text{ given by } 5\mapsto (1,0), -1\mapsto (0,1) \text{ if }d\geq 3.
\end{align*}
Throughout the paper, we fix these isomorphisms and use them to identify the multiplicative groups appearing on the left with the additive groups appearing on the right.
We also fix, for each $\ell^d$ with $\ell$ odd prime and $d\geq 1$, a primitive root $g$.

Now we consider a quadratic imaginary field $K$ with ring of integers $\oh_K$ and are interested in the group structure of $(\oh_K/\ell^d\oh_K)^\times$.
The group of units $(\oh_K/\LL^d)^\times$ for a prime ideal $\LL$ is determined in \cite{sittinger} (similar results appear in \cite[Chapter 6]{kohler}, but \cite{sittinger} gives generators for the cyclic factors).
However, in several cases a different choice of generators is more natural for our application, so we describe our use and modifications of \cite{sittinger} here.

\begin{lem}\label{inject_Z_to_OK}
  Fix $d \geq 1$, and let $\LL$ be a prime ideal of $\oh_K$ above the prime number $\ell$.
  The map $\ZZ\hookrightarrow\oh_K$ induces an injective ring homomorphism
  \begin{equation*}
    \ZZ/\ell^{\lceil d/e\rceil}\ZZ\into \oh_K/\LL^d,
  \end{equation*}
  where $e$ is the ramification index of $\ell$ in $\oh_K$.

  If $\ell$ is split then this is a ring isomorphism.
\end{lem}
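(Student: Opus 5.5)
The plan is to compute the kernel of the composite ring map $\ZZ\to\oh_K\to\oh_K/\LL^d$ and show it equals $\ell^{\lceil d/e\rceil}\ZZ$. The first isomorphism theorem then gives the injection, and a cardinality count settles the split case.

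First, the kernel is $\ZZ\cap\LL^d$. This is an ideal of $\ZZ$ that contains $\ell^d$, since $\ell\oh_K\subseteq\LL$. It is therefore of the form $\ell^t\ZZ$ for some $t$ with $0\le t\le d$. To identify $t$, use valuations. For an integer $a=\ell^s u$ with $\ell\nmid u$, we have $\nu_\LL(a)=e\,s$. This holds because $\nu_\LL(\ell)=e$ by definition of the ramification index, and $\nu_\LL(u)=0$ since $u$ is a unit modulo $\ell$ and hence not in $\LL$. So $a\in\LL^d$ if and only if $es\ge d$, i.e. $s\ge\lceil d/e\rceil$. Hence $\ZZ\cap\LL^d=\ell^{\lceil d/e\rceil}\ZZ$, and the induced map $\ZZ/\ell^{\lceil d/e\rceil}\ZZ\to\oh_K/\LL^d$ is an injective ring homomorphism.

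For the split case, $e=1$, so the left side has $\ell^d$ elements. The residue degree is also $1$, so $N(\LL)=\ell$ and $\card{\oh_K/\LL^d}=N(\LL)^d=\ell^d$. An injection between finite sets of equal size is a bijection, so the map is a ring isomorphism.

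The only point needing care is the valuation identity $\nu_\LL(\ell)=e$, together with the fact that integers prime to $\ell$ are units modulo $\LL$. Both are standard facts about Dedekind domains: $\ell\oh_K=\LL^e\LLbar$ or $\LL^e$, and $\LL\cap\ZZ=\ell\ZZ$. I don't expect a real obstacle here.
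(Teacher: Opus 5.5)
Your proposal is correct and follows essentially the same route as the paper: identify the kernel $\ZZ\cap\LL^d=\ell^{\lceil d/e\rceil}\ZZ$, pass to the quotient, and in the split case compare cardinalities using $N(\LL^d)=\ell^d$. The one difference is that your valuation argument ($\nu_\LL(\ell^s u)=es$ for $\ell\nmid u$) actually justifies the kernel identity, which the paper simply asserts.
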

\begin{proof}
  Let $\varphi$ denote the composition of the inclusion $\ZZ\to\oh_K$ with the quotient map $\oh_K\to\oh_K/\LL^d$.
  We have
  \begin{equation*}
    \ker \varphi = \LL^d\cap\ZZ = \ell^{\lceil d/e\rceil}\ZZ,
  \end{equation*}
  hence quotienting by the kernel gives the desired injective ring homomorphism.

  If $\ell$ is split we have $e=f=1$ and $N(\LL^d)=N(\LL)^d=\ell^d$ so the two rings have the same cardinality, hence the map is an isomorphism.
\end{proof}

Taking units, we deduce
\begin{cor}\label{inject_Z_to_OK_gps}
  We have an injective group homomorphism
  \begin{equation*}
    \iota\st (\ZZ/\ell^{\lceil d/e\rceil}\ZZ)^\times \into (\oh_K/\LL^d)^\times\qquad\text{mapping }1\mapsto 1.
  \end{equation*}
  If $\ell$ is split then this is a group isomorphism.
\end{cor}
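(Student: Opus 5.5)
The plan is to deduce the corollary directly from \cref{inject_Z_to_OK} by passing to unit groups. A ring homomorphism sends $1$ to $1$ and products to products, so it restricts to a group homomorphism between unit groups. I would therefore take $\iota$ to be the restriction of the injective ring map $\ZZ/\ell^{\lceil d/e\rceil}\ZZ\into\oh_K/\LL^d$ to $(\ZZ/\ell^{\lceil d/e\rceil}\ZZ)^\times$. Its image lies in $(\oh_K/\LL^d)^\times$: if $uv=1$ in the source, then $\iota(u)\iota(v)=\iota(1)=1$ in the target. Since $\iota$ is a restriction of an injective map, it is itself injective, and $\iota(1)=1$ holds automatically.

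For the split case, \cref{inject_Z_to_OK} says the ring map is an isomorphism. A ring isomorphism carries units bijectively onto units, because its inverse is also a ring homomorphism and so also preserves units. Hence $\iota$ is a bijective group homomorphism, that is, a group isomorphism.

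There is essentially no obstacle here. The only point to state explicitly is that unit groups are functorial for unital ring homomorphisms, and that this is why injectivity and bijectivity pass from the ring map to the map on units.
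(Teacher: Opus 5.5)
Your proposal is correct and follows the paper's own argument. The paper deduces the corollary from \cref{inject_Z_to_OK} simply by ``taking units'': it restricts the injective unital ring map to unit groups, and in the split case uses that a ring isomorphism induces an isomorphism on unit groups.
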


\begin{rem}
  \cref{inject_Z_to_OK_gps} is a simple generalisation of \cite[Lemma 4]{sittinger}.
  The split case gives \cite[Theorem 5]{sittinger}.
\end{rem}

The purpose of this section is to elucidate the following short exact sequence:
\begin{equation}\label{eq:lambda_d}
  1 \to (\ZZ/\ell^d\ZZ)^\times \to (\oh_K/\ell^d\oh_K)^\times \to \lambda_d \to 1.
\end{equation}

\begin{thm}\label{lambda-classification}
  Let $d\geq 1$.
  In each of the following cases, the given conditions on $d$, $\ell$, and $\oh_K$ imply the stated group structure for $\lambda_d$.

  \begin{center}
    \begin{tabular}{llllll}
      \toprule
          & $d$      & $\ell$   & $\ell$ in $\oh_K$ & extra assumptions      & $\lambda_d$                              \\ \midrule
      (a) & $\geq 1$ & $\geq 3$ & split             & none                   & $\ZZ/\ell^{d-1}\ZZ\times\ZZ/(\ell-1)\ZZ$ \\
          &          &          & inert             & none                   & $\ZZ/\ell^{d-1}\ZZ\times\ZZ/(\ell+1)\ZZ$ \\ \midrule
      (b) & $\geq 1$ & $=2$     & ramified          & $D_K\equiv 8\mod{16}$  & $\ZZ/\ell^d\ZZ$                          \\
          &          & $=3$     &                   & $D_K\equiv 3\mod{9}$                                              \\
          &          & $\geq 5$ &                   & none                                                              \\ \midrule
      (c) & $\geq 1$ & $=2$     & ramified          & $D_K\equiv 12\mod{16}$ & $\ZZ/\ell^{d-1}\ZZ\times\ZZ/\ell\ZZ$     \\
          &          & $=3$     &                   & $D_K\equiv 6\mod{9}$                                              \\ \midrule
      (d) & $\geq 2$ & $=2$     & split or inert    & none                   & $\ZZ/2^{d-2}\ZZ\times\ZZ/2\ZZ$           \\
      \bottomrule
    \end{tabular}
  \end{center}

  The remaining special cases (in (d)) are:
  \begin{itemize}
    \item For $\ell=2$ split and $d<2$ we have $\lambda_1\cong 0$.
    \item For $\ell=2$ inert and $d<2$ we have $\lambda_1\cong\ZZ/3\ZZ$.
  \end{itemize}
\end{thm}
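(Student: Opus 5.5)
We compute $(\oh_K/\ell^d\oh_K)^\times$ explicitly in each case and then take the cokernel of $\iota\st(\ZZ/\ell^d\ZZ)^\times\into(\oh_K/\ell^d\oh_K)^\times$, with the factors of $(\ZZ/\ell^d\ZZ)^\times$ identified as at the start of this appendix. First, by the Chinese Remainder Theorem, $\oh_K/\ell^d\oh_K\cong\oh_K/\LL^d\times\oh_K/\LLbar^d$ when $\ell$ splits, and $\oh_K/\ell^d\oh_K=\oh_K/\LL^{ed}$ with a single prime $\LL$ otherwise. The cardinality is always $\card{(\oh_K/\ell^d\oh_K)^\times}=\ell^{2d}(1-\ell^{-1})(1-(D_K/\ell)\ell^{-1})$. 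Dividing by $\ell^{d-1}(\ell-1)$ gives
\begin{equation*}
  \card{\lambda_d}=\ell^{d-1}\big(\ell-(D_K/\ell)\big).
\end{equation*}
This already matches every entry of the table, so in each case only the group structure has to be pinned down.

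In the split case, \cref{inject_Z_to_OK_gps} gives $(\oh_K/\ell^d\oh_K)^\times\cong(\ZZ/\ell^d\ZZ)^\times\times(\ZZ/\ell^d\ZZ)^\times$, and $\iota$ becomes the diagonal embedding. The cokernel of a diagonal embedding $A\to A\times A$ is isomorphic to $A$, via $(x,y)\mapsto xy^{-1}$. This yields (a) split, and (d) split together with $\lambda_1=0$ from the explicit structure of $(\ZZ/2^d\ZZ)^\times$.

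In the inert case, $\oh_K/\ell^d\oh_K$ is a local ring with residue field $\FF_{\ell^2}$. Its unit group is $\FF_{\ell^2}^\times\times(1+\ell\oh_K)/(1+\ell^d\oh_K)$, and for $\ell$ odd the second factor is $\cong(\ZZ/\ell^{d-1}\ZZ)^2$ via the $\ell$-adic logarithm/exponential. Under this decomposition $\iota$ maps $\FF_\ell^\times\into\FF_{\ell^2}^\times$, with cokernel cyclic of order $\ell+1$. On the $1$-units, $\iota$ maps the cyclic group $1+\ell\ZZ$ onto a direct summand, since $\ZZ_\ell$ is a saturated sublattice of $\oh_{K,\ell}$. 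The quotient is therefore $\ZZ/\ell^{d-1}\ZZ$. For $\ell=2$ inert the same approach works, replacing logarithms by the known structure $(1+2\oh_{K,2})\cong\{\pm1\}\text{-part}\times\ZZ_2^2$. A direct check that $-1$ and $5$ generate a saturated subgroup modulo the torsion $\mu$ gives $\ZZ/2^{d-2}\ZZ\times\ZZ/2\ZZ$, and the $\FF_4^\times$ factor gives $\lambda_1=\ZZ/3\ZZ$.

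The ramified cases are the main obstacle, and this is where we follow \cite{sittinger} and choose generators by hand. Here $(\oh_K/\LL^{2d})^\times\cong\FF_\ell^\times\times U_1/U_{2d}$ with $U_i=1+\LL^i$, and $\iota$ contains the whole $\FF_\ell^\times$ factor. Hence $\lambda_d\cong U_1/(U_{2d}\cdot(1+\ell\ZZ))$, a group of order $\ell^d$. For $\ell\ge5$ the logarithm converges on $U_1$ and identifies it with $\LL\cong\ZZ_\ell^2$. The image of $1+\ell\ZZ$ is the saturated sublattice $\ell\ZZ_\ell$, so the quotient is cyclic, generated by $1+\sqrt{D_K}$, which gives $\ZZ/\ell^d\ZZ$. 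For $\ell=2,3$ the logarithm fails on $U_1$ because of roots of unity in $K_\LL$. There we determine whether $K_\LL$ contains a primitive $\ell$-th root of unity, namely $\QQ_3(\sqrt{-3})$ or $\QQ_2(\sqrt{-1})$ up to the relevant twists; this is exactly the dichotomy $D_K\equiv 6\mod 9$ or $12\mod{16}$ versus the other classes. If such a root of unity $\zeta$ is present, it contributes an extra $\ZZ/\ell\ZZ$ factor not hit by $\iota$, giving (c). If it is absent, the argument for $\ell\ge5$ goes through on $U_2$ with one generator, giving (b). In each subcase we exhibit the explicit generator ($\zeta$, or a suitable $1+\sqrt{D_K}/2$ or $1+\sqrt{D_K}$) and verify its order modulo $\ell^d$ and modulo the image of $\iota$ by a direct norm and valuation computation. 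This order computation for $\ell=2,3$ is the fiddly step where care is needed.
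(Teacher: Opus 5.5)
\textbf{Main gap: case (d) for $\ell=2$ inert and $d\ge 2$.} Here the table entry itself is false, so no argument can establish it. Your own count $\card{\lambda_d}=\ell^{d-1}(\ell-(D_K/\ell))$ gives $3\cdot 2^{d-1}$ in this case, while $\ZZ/2^{d-2}\ZZ\times\ZZ/2\ZZ$ has order $2^{d-1}$. So your claim that the count ``matches every entry of the table'' is wrong. For example, with $D_K=-11$ and $d=2$ one has $\card{(\oh_K/4\oh_K)^\times}=12$ and $\card{(\ZZ/4\ZZ)^\times}=2$, hence $\card{\lambda_2}=6$.

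Your own decomposition shows where the extra factor comes from. The Teichm\"uller factor $\FF_4^\times\cong\ZZ/3\ZZ$ is a direct factor of $(\oh_K/2^d\oh_K)^\times$ for every $d$. It meets the $2$-group $\iota\big((\ZZ/2^d\ZZ)^\times\big)$ trivially, so it survives in $\lambda_d$ for all $d$, not only for $d=1$. The correct statement is $\lambda_d\cong\ZZ/2^{d-2}\ZZ\times\ZZ/2\ZZ\times\ZZ/3\ZZ$.

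The paper's proof makes the same slip: it lists $\zeta$ of order $3$ among the generators of $(\oh_K/2^d\oh_K)^\times$, then omits its image from the generators of $\lambda_d$. This is harmless downstream, since only the Sylow $2$-subgroup enters Theorem~\ref{kappa-classification}. You should nevertheless state and prove the corrected version.

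Your justification of the $2$-part in this case is also wrong: the image of $\iota$ in $(1+2\oh_{K,2})/\{\pm1\}$ is \emph{not} saturated. Let $\omega$ be a primitive cube root of unity. Then $(1+2\omega)^2=-3\in 1+4\ZZ_2$, but $1+2\omega\notin\pm(1+4\ZZ_2)$. This index-$2$ defect is exactly what produces the factor $\ZZ/2\ZZ$; a saturated image would force the $2$-part of $\lambda_d$ to be cyclic. Concretely, $\log$ identifies:
\begin{itemize}
\item $(1+2\oh_{K,2})/\{\pm1\}$ with $2\ZZ_2\oplus 4\ZZ_2\omega$;
\item the image of $\iota$ with $4\ZZ_2$;
\item $1+2^d\oh_{K,2}$ with $2^d\oh_{K,2}$.
\end{itemize}
So the $2$-part is $\ZZ/2\ZZ\times\ZZ/2^{d-2}\ZZ$. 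In the paper's notation, this is the fact that $(1+2\zeta)^2$ lies in the image of $\iota$.

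\textbf{Second gap: the ramified case with $\ell=2$.} Your root-of-unity dichotomy, as stated, does not reproduce the table. For $\ell=3$ it is correct: torsion in $U_1/\ZZ_3^\times$ would be a cube root of a $3$-adic unit in a quadratic extension, which forces $\zeta_3\in K_\LL$, i.e.\ $D_K\equiv 6\pmod 9$. For $\ell=2$, however, the class $D_K\equiv 12\pmod{16}$ contains $N\equiv 3\pmod 8$. There $K_\LL\cong\QQ_2(\sqrt{-5})$ contains no primitive $4$th root of unity, and the extra $\ZZ/2\ZZ$ comes from $\sqrt{-5}$, whose square lies in the image of $\iota$. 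The right test is whether some $u\in U_1\setminus\ZZ_2^\times$ has $u^2\in\ZZ_2^\times$, i.e.\ whether $K_\LL=\QQ_2(\sqrt a)$ with $a\in\ZZ_2^\times$. This is exactly the condition $N\equiv 3\pmod 4$.

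Also, for $\ell=2$ the logarithm kills $-1\in U_2$ and is an isomorphism $U_n\to\LL^n$ only for $n\ge 3$. So ``the $\ell\ge5$ argument on $U_2$'' must be adapted, for instance by working modulo $\pm1$, which lies in the image of $\iota$. Since you fall back on explicit generators and order computations for $\ell=2,3$, as the paper does by citing Sittinger, these points are repairable.

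\textbf{Where your route is sound.} Outside these cases your argument is correct. The split case is the same diagonal-cokernel argument as the paper's. For $\ell$ odd inert and $\ell\ge 5$ ramified, you take a genuinely different route: instead of the paper's hand-chosen generators, you use the Teichm\"uller splitting together with the $\ell$-adic logarithm and saturation of $\ell\ZZ_\ell$. This is more conceptual. It also makes compatibility with the reduction maps $\lambda_{d+1}\to\lambda_d$ automatic, which is what Theorem~\ref{kappa-classification} needs.
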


\begin{rem}
  Note that the congruence classes for $D_K$ listed for $\ell=2,3$ are the only possible ones given that we assume $\ell$ to be ramified in $\oh_K$: $2$ is ramified in $\oh_K$ if and only if $D_K\equiv 0\mod{4}$, so $D_K=-4y$ with $y$ positive, squarefree, and $\equiv 1,2\mod{4}$.
  Similarly, $3$ is ramified in $\oh_K$ if and only if $D_K\equiv 0\mod{3}$.
\end{rem}

\begin{proof}[Proof of \cref{lambda-classification}]
  We proceed according to the type of ramification of $\ell$ in $\oh_K$.
  \begin{enumerate}
    \item \textbf{split}:
      Write $\ell\oh_K=\LL\LLbar$.
      \cref{inject_Z_to_OK_gps} allows us to identify $(\oh_K/\LL^d)^\times$ with $(\ZZ/\ell^d\ZZ)^\times$ (and similarly for $\LLbar^d$).
      By the Chinese Remainder Theorem the exact sequence \eqref{eq:lambda_d} is
      \begin{equation*}
        1\to (\ZZ/\ell^d\ZZ)^\times \xrightarrow{\;\;\iota\;\;} (\ZZ/\ell^d\ZZ)^\times \times (\ZZ/\ell^d\ZZ)^\times\to \lambda_d\to 1,
      \end{equation*}
      where the embedding $\iota$ is diagonal (due to the non-trivial Galois automorphism of $K$ that interchanges $\LL$ and $\LLbar$).

      If $\ell\neq 2$ (or $\ell=2$ and $d=1$) we have $(\ZZ/\ell^d\ZZ)^\times=\langle g\rangle$ with $\iota(g)=(g,g)$ so $\lambda_d$ is generated by the image of $(g,1)$.
      We pass to the additive versions of these groups:
      \begin{equation*}
        \begin{tikzcd}[column sep=2em,row sep=1.5em,trim left=-7.5cm]
          (\ZZ/\ell^d\ZZ)^\times \arrow[r, "\iota"] \arrow[d, "\sim"] & (\ZZ/\ell^d\ZZ)^\times \times (\ZZ/\ell^d\ZZ)^\times \arrow[r] \arrow[d,"\sim"] & \lambda_d\\
          \ZZ/\ell^{d-1}\ZZ \times \ZZ/(\ell-1)\ZZ \arrow[r, "j"] & (\ZZ/\ell^{d-1}\ZZ\times\ZZ/(\ell-1)\ZZ)\times (\ZZ/\ell^{d-1}\ZZ\times\ZZ/(\ell-1)\ZZ),
        \end{tikzcd}
      \end{equation*}
      where $j(1)=(1,1)$, so the cokernel of $j$ is generated by the image of $(1,0)$.
      We can therefore identify $\lambda_d$ with $\coker j\cong\ZZ/\ell^{d-1}\ZZ\times\ZZ/(\ell-1)\ZZ$ via the isomorphism that takes $(g,1)$ to $(1,0)$.

      For $\ell=2$ and $d\geq 2$ we have $(\ZZ/2^d\ZZ)^\times\cong \langle 5\rangle \times \langle -1\rangle$ with $\iota(5)=(5,5)$ and $\iota(-1)=(-1,-1)$, so $\lambda_d$ is generated by the images of $(5,1)$ and $(1,-1)$.
      We have
      \begin{equation*}
        \begin{tikzcd}[column sep=2em,row sep=1.5em,trim left=-7.5cm]
          (\ZZ/2^d\ZZ)^\times \arrow[r, "\iota"] \arrow[d, "\sim"] & (\ZZ/2^d\ZZ)^\times \times (\ZZ/2^d\ZZ)^\times \arrow[r] \arrow[d,"\sim"] & \lambda_d\\
          \ZZ/2^{d-2}\ZZ \times \ZZ/2\ZZ \arrow[r, "j"] & (\ZZ/2^{d-2}\ZZ\times\ZZ/2\ZZ)\times (\ZZ/2^{d-2}\ZZ\times\ZZ/2\ZZ)
        \end{tikzcd}
      \end{equation*}
      where the generators $(1,0)$ and $(0,1)$ of $\ZZ/2^{d-2}\ZZ\times\ZZ/2\ZZ$ are mapped as follows:
      \begin{equation*}
        j(1,0)=\big((1,0),(1,0)\big)\qquad\text{and}\qquad j(0,1)=\big((0,1),(0,1)\big).
      \end{equation*}
      Therefore the cokernel of $j$ is generated by the images of $\big((1,0),(0,0)\big)$ (of order $2^{d-2}$) and $\big((0,1),(0,0)\big)$ (of order $2$).
      We can identify $\lambda_d$ with $\coker j\cong\ZZ/2^{d-2}\ZZ\times\ZZ/2\ZZ$ via the isomorphism that takes $(5,1)$ to $\big((1,0),(0,0)\big)$ and $(1,-1)$ to $\big((0,1),(0,0)\big)$.

    \item \textbf{inert}:
      We have $e=1$ so \cref{inject_Z_to_OK_gps} gives our embedding
      \begin{equation*}
        \iota\st (\ZZ/\ell^{d}\ZZ)^\times \into (\oh_K/\ell^d\oh_K)^\times.
      \end{equation*}
      If $\ell\neq 2$, we modify the argument in \cite[Theorem 6]{sittinger} as follows.

      Let $\beta\in\oh_K$ be such that its image in $(\oh_K/\ell\oh_K)^\times\cong\FF_{\ell^2}^\times$ is a generator of this cyclic group.
      Then the image of $\beta^{\ell^{d-1}}$ in $(\oh_K/\ell^d\oh_K)^\times$ has order $\ell^2-1$.
      Let $\zeta$ be the image of $\beta^{\ell^{d-1}(\ell-1)}$, then $\zeta$ is an element of order $\ell+1$.

      By the arguments in \cite[Lemmas 2 and 3]{sittinger}, the element $1+\ell\zeta$ has order $\ell^{d-1}$.

      Finally, the element $g$ has order $\ell^{d-1}(\ell-1)$.

      We claim that $(\oh_K/\ell^d\oh_K)^\times=\langle g\rangle \times \langle 1+\ell\zeta\rangle \times \langle \zeta\rangle$.

      Since $\iota$ maps isomorphically onto the first factor $\langle g\rangle$, the cokernel $\lambda_d$ is generated by the images of $1+\ell\zeta$ and of $\zeta$.
      We can identify $\lambda_d$ with $\ZZ/\ell^{d-1}\ZZ\times\ZZ/(\ell+1)\ZZ$ via the isomorphism that takes $1+\ell\zeta$ to $(1,0)$ and $\zeta$ to $(0,1)$.

      For the case $\ell=2$ we modify \cite[Theorem 7]{sittinger} along similar lines.
      The upshot is that, if $d\geq 2$:
      \begin{equation*}
        (\oh_K/2^d\oh_K)^\times = \langle 1+2\zeta\rangle \times \langle -1\rangle \times \langle 1+4\zeta \rangle \times \langle \zeta\rangle,
      \end{equation*}
      where $\zeta$ is an element of order $3$, $(1+2\zeta)^2=5$ so $1+2\zeta$ has order $2^{d-1}$, and $1+4\zeta$ has order $2^{d-2}$.

      Moreover, $\iota$ maps $5$ to $(1+2\zeta)^2$ and $-1$ to $-1$, so that the cokernel $\lambda_d$ is generated by the images of $1+2\zeta$ (of order $2$) and of $1+4\zeta$ (of order $2^{d-2}$).
      We can identify $\lambda_d$ with $\ZZ/2\ZZ\times\ZZ/2^{d-2}\ZZ$ via the isomorphism that takes $1+2\zeta$ to $(1,0)$ and $1+4\zeta$ to $(0,1)$.

      The special case $d=1$ is easily treated: $(\oh_K/2\oh_K)^\times\cong\FF_4^\times\cong\ZZ/3\ZZ$ and $(\ZZ/2\ZZ)^\times\cong 0$, so $\lambda_1$ is generated by the image of a generator $\zeta$ of $(\oh_K/2\oh_K)^\times$, and can be identified with $\ZZ/3\ZZ$ by mapping $\zeta$ to $1$.
    \item \textbf{ramified}:
      This time $e=2$ and \cref{inject_Z_to_OK_gps} gives the embedding
      \begin{equation*}
        \iota\st (\ZZ/\ell^{d}\ZZ)^\times \into (\oh_K/\LL^{2d})^\times=(\oh_K/\ell^d\oh_K)^\times.
      \end{equation*}
      Suppose $\ell\geq 5$.
      We can use \cite[Theorem 8]{sittinger} as is:
      \begin{equation*}
        (\oh_K/\ell^d\oh_K)^\times = \langle g\rangle \times \langle 1+\sqrt{N}\rangle,
      \end{equation*}
      where $1+\sqrt{N}$ has order $\ell^d$.
      As $\iota$ maps isomorphically onto the first factor, the cokernel $\lambda_d$ is generated by the image of $1+\sqrt{N}$.
      We can identify $\lambda_d$ with $\ZZ/\ell^d\ZZ$ by mapping $1+\sqrt{N}$ to $1$.

      The same reasoning applies to the case $\ell=3$, where we can use \cite[Theorems 9 and 10]{sittinger} as stated, giving:
      \begin{itemize}
        \item if $D_K\equiv 3\mod{9}$, $\lambda_d$ is generated by the image of $1+\sqrt{N}$, an element of order $3^d$;
        \item if $D_K\equiv 6\mod{9}$, $\lambda_d$ is generated by the images of $1+3\sqrt{N}$ (of order $3^{d-1}$) and $\zeta$ (of order $3$).
      \end{itemize}

      It remains to deal with the case $\ell=2$.
      For $d\geq 3$, following \cite[Theorem 12]{sittinger}, we consider congruence classes
      \begin{itemize}
        \item $N\equiv 2\mod{4}$, so $D_K\equiv 8\mod{16}$.
          We use \cite[Theorem 12]{sittinger} (a) as is and get that $\lambda_d$ is generated by the image of $1+\sqrt{N}$, an element of order $2^d$.
        \item $N\equiv 7\mod{8}$, so $D_K\equiv 28\mod{32}$.
          Here \cite[Theorem 12]{sittinger} (b) says that
          \begin{equation*}
            (\oh_K/2^d\oh_K)^\times = \langle 5\rangle \times \langle \zeta\rangle \times \langle 1+2\sqrt{N}\rangle,
          \end{equation*}
          where $\zeta$ satisfies $\zeta^2=-1$ (see \cite[bottom of page 18]{sittinger}) and $1+2\sqrt{N}$ has order $2^{d-1}$.

          So $\iota$ maps $\langle 5\rangle$ isomorphically onto the first factor and maps $\langle -1\rangle$ to $\langle \zeta^2\rangle$ in the second factor.
          Therefore the cokernel $\lambda_d$ is generated by the images of $\zeta$ and of $1+2\sqrt{N}$.
          We can identify $\lambda_d$ with $\ZZ/2\ZZ\times\ZZ/2^{d-1}\ZZ$ by mapping $\zeta$ to $(1,0)$ and $1+2\sqrt{N}$ to $(0,1)$.
        \item $N\equiv 3\mod{8}$, so $D_K\equiv 12\mod{32}$.
          Here we note that a simple modification of the proof of \cite[Theorem 12]{sittinger} (c) gives
          \begin{equation*}
            (\oh_K/2^d\oh_K)^\times = \langle \zeta\rangle \times \langle -1\rangle \times \langle 1+2\sqrt{N}\rangle,
          \end{equation*}
          where $\zeta$ satisfies $\zeta^2=5$ and $1+2\sqrt{N}$ has order $2^{d-1}$.

          The embedding $\iota$ sends $\langle 5\rangle$ to $\langle \zeta^2\rangle$ in the first factor, and sends $\langle -1\rangle$ isomorphically to the second factor.
          Therefore the cokernel $\lambda_d$ is generated by the images of $\zeta$ and $1+2\sqrt{N}$.
          We can identify $\lambda_d$ with $\ZZ/2\ZZ\times\ZZ/2^{d-1}\ZZ$ by mapping $\zeta$ to $(1,0)$ and $1+2\sqrt{N}$ to $(0,1)$.
      \end{itemize}
      In hindsight, we can group the results for $D_K\equiv 28\mod{32}$ and $D_K\equiv 12\mod{32}$ into a single case $D_K\equiv 12\mod{16}$.

      The $d=1$ and $d=2$ cases follow easily from \cite[Theorem 11]{sittinger}.
      \qedhere
  \end{enumerate}
\end{proof}
\begin{rem}\label{lambda_1}
  We point out that in the case $d=1$, \cref{lambda-classification} gives
  \begin{equation*}
    \lambda_1 \cong \begin{cases}
      \ZZ/(\ell-1)\ZZ & \text{if $\ell$ is split,}    \\
      \ZZ/(\ell+1)\ZZ & \text{if $\ell$ is inert,}    \\
      \ZZ/\ell\ZZ     & \text{if $\ell$ is ramified.}
    \end{cases}
  \end{equation*}
\end{rem}

\section{Ranks}\label{sec:ranks}

Let $S$ be an abelian $\ell$-group and consider the multiplication-by-$\ell$ map on $S$:
\begin{equation}\label{eq:mult_by_ell}
  \begin{tikzcd}[column sep=2em, row sep=1.5em]
    0 \arrow[r]
    & S[\ell] \arrow[r]
    & S \arrow[r, "\times \ell"]
    & S \arrow[r]
    & S/\ell S \arrow[r]
    & 0.
  \end{tikzcd}
\end{equation}
The rank of $S$, denoted $\rk(S)$, is the dimension of $S[\ell]$ as an $\FF_\ell$-vector space.

Consider the decomposition of $S$ into cyclic factors:
\begin{equation*}
  S \cong \bigoplus_{i = 1}^t \ZZ/\ell^{a_i}\ZZ, \qquad a_1\geq\cdots\geq a_t \geq 1.
\end{equation*}
Each summand contributes one copy of $\ZZ/\ell\ZZ$ to the subgroup $S[\ell]$, so that $S[\ell]\cong (\ZZ/\ell\ZZ)^t$.
The rank of $S$ is thus $t$, the number of summands.
Similarly, we have an isomorphism $S/\ell S \cong (\ZZ/\ell\ZZ)^t$.
So also $\rk(S) = \dim_{\FF_\ell} S/\ell S$.

If $0\to A\to B\to C\to 0$ is a short exact sequence of finite abelian groups, then applying the Snake Lemma to
\begin{equation*}
  \begin{tikzcd}
    0 \arrow[r]
    & A \arrow[r] \arrow[d, "\times \ell"]
    & B \arrow[r] \arrow[d, "\times \ell"]
    & C \arrow[r] \arrow[d, "\times \ell"]
    & 0 \\
    0 \arrow[r]
    & A \arrow[r]
    & B \arrow[r]
    & C \arrow[r]
    & 0.
  \end{tikzcd}
\end{equation*}
gives a long exact sequence
\begin{equation}\label{eq:les}
  0\to A[\ell]\to B[\ell]\to C[\ell]\xrightarrow{\;\;\delta\;\;} A/\ell A\to B/\ell B\to C/\ell C \to 0.
\end{equation}
We can therefore express the rank of $B$ as
\begin{equation*}
  \rk(B) = \rk(A) + \rk(C) - \dim_{\FF_\ell} \im\delta.
\end{equation*}
Note also that $\dim\im\delta\leq \rk(A)$.

In our setting, let $S_d$ be the Sylow $\ell$-subgroup of $\Cl(\oh_d)$.
We have the following commutative diagram:
\begin{equation*}
  \begin{tikzcd}
    0 \arrow[r]
    & \kappa_{d+1} \arrow[r] \arrow[d, "\pi'"]
    & S_{d+1} \arrow[r] \arrow[d, "\pi"]
    & S_0 \arrow[r] \arrow[d, equal]
    & 0 \\
    0 \arrow[r]
    & \kappa_d \arrow[r]
    & S_d \arrow[r]
    & S_0 \arrow[r]
    & 0,
  \end{tikzcd}
\end{equation*}
where $\pi$ is the natural surjection $S_{d+1}\to S_d$ and $\pi'$ is its restriction to $\kappa_{d+1}$.
Note that $\pi'$ is also surjective (apply the Snake Lemma to the above diagram).

Taking $\ell$-torsion in each of the two rows gives two long exact sequences; by the naturality of the Snake Lemma (see \cite[paragraph following Lemma 5.1]{hilton-stammbach}), these form a commutative diagram
\begin{equation*}
  \begin{tikzcd}[column sep=1.5em]
    0 \arrow[r]
    & \kappa_{d+1}[\ell] \arrow[r] \arrow[d, "{\pi'[\ell]}"]
    & S_{d+1}[\ell] \arrow[r] \arrow[d, "{\pi[\ell]}"]
    & S_0[\ell] \arrow[r, "\delta_{d+1}"] \arrow[d, equal]
    & \kappa_{d+1}/\ell \kappa_{d+1} \arrow[r] \arrow[d, "\bar{\pi}'"]
    & S_{d+1}/\ell S_{d+1} \arrow[r] \arrow[d, "\bar{\pi}"]
    & S_0/\ell S_0 \arrow[r] \arrow[d, equal]
    & 0 \\
    0 \arrow[r]
    & \kappa_d[\ell] \arrow[r]
    & S_d[\ell] \arrow[r]
    & S_0[\ell] \arrow[r, "\delta_d"]
    & \kappa_d/\ell \kappa_d \arrow[r]
    & S_d/\ell S_d \arrow[r]
    & S_0/\ell S_0 \arrow[r]
    & 0,
  \end{tikzcd}
\end{equation*}
In particular, the commutative square in the centre shows that $\delta_d = \bar{\pi}'\circ\delta_{d+1}$.
Note that $\bar{\pi}'$ is surjective since $\pi'$ is (and tensoring with $\ZZ/\ell\ZZ$ is a right exact functor).

We conclude that
\begin{lem}\label{lem:equal_rank}
  \begin{equation*}
    \rk(S_{d+1})-\rk(S_d) = \dim\ker\bar{\pi}'-\dim\big(\ker\bar{\pi}'\cap \im\delta_{d+1}\big),
    \text{ where $\dim\ker\bar{\pi}'=\rk(\kappa_{d+1})-\rk(\kappa_d)$}.
  \end{equation*}
\end{lem}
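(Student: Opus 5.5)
We compute the $\ell$-rank of $S_{d+1}$ and of $S_d$ separately from the two rows of the long exact sequences just displayed, and then subtract. From the general identity $\rk(B)=\rk(A)+\rk(C)-\dim_{\FF_\ell}\im\delta$ applied to each row, we get
\begin{equation*}
  \rk(S_{d+1}) = \rk(\kappa_{d+1})+\rk(S_0)-\dim\im\delta_{d+1},\qquad
  \rk(S_d) = \rk(\kappa_d)+\rk(S_0)-\dim\im\delta_d .
\end{equation*}
Subtracting, the $\rk(S_0)$ terms cancel, leaving
\begin{equation*}
  \rk(S_{d+1})-\rk(S_d) = \big(\rk(\kappa_{d+1})-\rk(\kappa_d)\big) - \big(\dim\im\delta_{d+1}-\dim\im\delta_d\big).
\end{equation*}

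Next we identify the two bracketed quantities. For the first, $\bar{\pi}'\colon\kappa_{d+1}/\ell\kappa_{d+1}\to\kappa_d/\ell\kappa_d$ is surjective, as noted above. Since $\rk$ equals $\dim_{\FF_\ell}$ of the quotient by $\ell$, rank–nullity gives $\dim\ker\bar{\pi}'=\rk(\kappa_{d+1})-\rk(\kappa_d)$; this is the second assertion of the lemma.

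For the second bracket, we use the commutative square $\delta_d=\bar{\pi}'\circ\delta_{d+1}$. This gives $\im\delta_d=\bar{\pi}'(\im\delta_{d+1})$. Applying rank–nullity to the restriction of $\bar{\pi}'$ to the subspace $\im\delta_{d+1}$, whose kernel is $\ker\bar{\pi}'\cap\im\delta_{d+1}$, yields
\begin{equation*}
  \dim\im\delta_{d+1}-\dim\im\delta_d=\dim\big(\ker\bar{\pi}'\cap\im\delta_{d+1}\big).
\end{equation*}
Substituting both identifications into the difference above gives the claimed formula.

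There is no real obstacle. The only point needing care is the justification of $\delta_d=\bar{\pi}'\circ\delta_{d+1}$, which is the naturality of the connecting homomorphism already cited from Hilton–Stammbach. Everything else is linear algebra over $\FF_\ell$, using that all the groups involved in the rank computations are $\FF_\ell$-vector spaces.
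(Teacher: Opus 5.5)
Your proof is correct and is essentially the paper's argument: the paper sets up the rank formula $\rk(B)=\rk(A)+\rk(C)-\dim\im\delta$ for each row, the naturality relation $\delta_d=\bar{\pi}'\circ\delta_{d+1}$, and the surjectivity of $\bar{\pi}'$, and then simply says ``we conclude''. You have written out explicitly the rank--nullity bookkeeping that the paper leaves implicit.
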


\begin{thm} \label{class-group-ranks}
  Fix $\ell\geq 3$.
  \begin{enumerate}
    \item Suppose $\ell$ is split or inert.
      Then
      \begin{align*}
        \rk(S_1) & = \rk(S_0)                         \\
        \rk(S_2) & = \rk(S_0)\text{ or }\rk(S_0)+1    \\
        \rk(S_d) & = \rk(S_2)\text{ for all }d\geq 2.
      \end{align*}
    \item Suppose $\ell$ is ramified and:
      \begin{equation*}
        \text{$\ell\geq 5$ or ($\ell=3$ and $D_K\equiv 3\mod{9}$).}
      \end{equation*}
      Then
      \begin{align*}
        \rk(S_1) & = \rk(S_0)\text{ or }\rk(S_0)+1    \\
        \rk(S_d) & = \rk(S_1)\text{ for all }d\geq 1.
      \end{align*}
    \item Suppose $\ell=3$ is ramified and $D_K\equiv 6\mod{9}$.
      Then
      \begin{align*}
        \rk(S_1) & = \rk(S_0)\text{ or }\rk(S_0)+1    \\
        \rk(S_2) & = \rk(S_1)\text{ or }\rk(S_1)+1    \\
        \rk(S_d) & = \rk(S_2)\text{ for all }d\geq 2.
      \end{align*}
  \end{enumerate}
\end{thm}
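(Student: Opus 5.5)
The plan is to apply \cref{lem:equal_rank} for each $d\geq 0$ and bound the right-hand side using the structure of $\kappa_d$ from \cref{kappa-classification}. Write $\rho_d=\rk(\kappa_{d+1})-\rk(\kappa_d)=\dim\ker\bar{\pi}'$. \cref{lem:equal_rank} then gives
\begin{equation*}
  0\leq \rk(S_{d+1})-\rk(S_d)\leq \rho_d .
\end{equation*}
So it suffices to compute $\rk(\kappa_d)$ for all $d$ in each case and read off where $\rho_d=0$ and where $\rho_d=1$. The rank of a cyclic group $\ZZ/\ell^m\ZZ$ is $1$ if $m\geq1$ and $0$ if $m=0$, and the ranks of products add.

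\textbf{Case (a).} Here $\ell$ is split or inert and $\ell\geq3$, and \ref{kappa-class-split-inert-general} gives $\kappa_d\cong\ZZ/\ell^{d-1}\ZZ$ for $d\geq1$, with $\kappa_0=1$. Hence $\rk\kappa_0=\rk\kappa_1=0$ and $\rk\kappa_d=1$ for $d\geq2$. This gives $\rho_0=0$, $\rho_1=1$, and $\rho_d=0$ for $d\geq2$. The three displayed claims follow directly: $\rk(S_1)=\rk(S_0)$, then $\rk(S_2)-\rk(S_1)\in\{0,1\}$, and the rank is constant for $d\geq2$.

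\textbf{Case (b).} By \ref{kappa-class-ramified-general}, $\kappa_d\cong\ZZ/\ell^d\ZZ$ for $d\geq1$, so $\rk\kappa_0=0$ and $\rk\kappa_d=1$ for $d\geq1$. Thus $\rho_0=1$ and $\rho_d=0$ for $d\geq1$, which is exactly the claim.

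\textbf{Case (c).} By \ref{kappa-class-ramified-special}, $\kappa_d\cong\ZZ/3^{d-1}\ZZ\times\ZZ/3\ZZ$, with $\kappa_0=1$. This gives ranks $0,1,2,2,\dots$ for $d=0,1,2,3,\dots$, so $\rho_0=\rho_1=1$ and $\rho_d=0$ for $d\geq2$, matching the statement.

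The only step needing care is the justification of the two inequalities $0\leq\rk(S_{d+1})-\rk(S_d)\leq\rho_d$. The formula of \cref{lem:equal_rank} is a difference of a dimension and the dimension of a subspace of it, so the difference lies between $0$ and $\dim\ker\bar\pi'$. We also need $\dim\ker\bar{\pi}'=\rk(\kappa_{d+1})-\rk(\kappa_d)$, which holds because $\bar\pi'\colon\kappa_{d+1}/\ell\kappa_{d+1}\to\kappa_d/\ell\kappa_d$ is surjective, so its kernel has dimension equal to the difference of the dimensions of source and target. For $d=0$ the lemma still applies, since $\kappa_0$ is trivial and $\pi'_0$ is the zero map onto it.

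The main obstacle I anticipate is the bookkeeping at small $d$. \cref{kappa-classification} is stated for $d\geq1$, with $\kappa_0=1$ listed separately, so one has to confirm that the formulas are valid at $d=1$. For example, in case (a), $\kappa_1=\ZZ/\ell^0\ZZ$ is trivial, consistent with $\lambda_1\cong\ZZ/(\ell\mp1)\ZZ$ having trivial Sylow $\ell$-subgroup. In case (c), $\kappa_1\cong\ZZ/3\ZZ$ agrees with \cref{kerpid}.
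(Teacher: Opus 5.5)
Your proposal is correct and follows the paper's proof: you read off $\rk(\kappa_d)$ from \cref{kappa-classification} and use \cref{lem:equal_rank} to get $0\leq\rk(S_{d+1})-\rk(S_d)\leq\rk(\kappa_{d+1})-\rk(\kappa_d)$. Your increments $\rho_d$, including the $d=0$ step, are just a more systematic write-up of the paper's case-by-case reading.
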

\begin{proof}
  All the parts are treated in a similar way: we determine the rank of $\kappa_d$ from the relevant part of \cref{kappa-classification}, then combine it with \cref{lem:equal_rank} to relate the ranks of $S_{d+1}$ and $S_d$.
  We give more details in part (a), and describe the main differences in the other parts.
  \begin{enumerate}
    \item
      By \ref{kappa-class-split-inert-general} we have $\kappa_d\cong\ZZ/\ell^{d-1}\ZZ$, then using \cref{lem:equal_rank}:
      \begin{itemize}
        \item $\kappa_1[\ell]=0$, so trivially $\rk(S_1)=\rk(S_0)$.
        \item For $d\geq 2$, $\rk(\kappa_d)=1$ so we get $\rk(S_{d+1})=\rk(S_d)$.
        \item For $d=1$, $\dim\ker\bar{\pi}'=1$ so
          \begin{equation*}
            \rk(S_2) - \rk(S_1) = 1 - \dim\big(\ker\bar{\pi}'\cap\im\delta_2\big) \in \{0,1\}.
          \end{equation*}
      \end{itemize}
    \item Use $\rk(\kappa_d)=1$ for all $d\geq 1$.
    \item Use $\rk(\kappa_1)=1$ and $\rk(\kappa_d)=2$ for all $d\geq 2$.\qedhere
  \end{enumerate}
\end{proof}

In the case $\ell=2$, we get more precise information directly from the calculation of $\card{S_d[2]}$ from \cref{ses-card-holds}:
\begin{prop}
  Let $\ell=2$.
  \begin{enumerate}
    \item If $2$ is split or inert, then
      \begin{align*}
        \rk(S_1) & = \rk(S_0)                           \\
        \rk(S_2) & = \rk(S_0)+1                         \\
        \rk(S_d) & = \rk(S_0)+2\text{ for all }d\geq 3.
      \end{align*}
    \item If $2$ is ramified and $D_K\equiv 8\mod{16}$, then
      \begin{align*}
        \rk(S_d) & = \rk(S_0)+1\text{ for all }d\geq 1.
      \end{align*}
    \item If $2$ is ramified and $D_K\equiv 12\mod{16}$, then
      \begin{align*}
        \rk(S_1) & = \rk(S_0)                           \\
        \rk(S_d) & = \rk(S_0)+1\text{ for all }d\geq 2.
      \end{align*}
  \end{enumerate}
\end{prop}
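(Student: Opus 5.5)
The plan is to read off the ranks from the explicit $2$-torsion counts computed in the proof of \cref{ses-card-holds}. For an abelian $2$-group $S$ we have $\rk(S)=\log_2\card{S[2]}$, so it suffices to compare $\card{S_d[2]}$ with $\card{S_0[2]}$ for each $d$. Both counts come from \cref{two-torsion} applied to the discriminant $D_d=2^{2d}D_0$. This discriminant has the same number $\eta$ of odd prime divisors for every $d$, since the conductor $c_0$ of $\oh_0$ is odd and only powers of $2$ are added. Hence only the $2$-adic shape of $D_d$ changes, and it determines which line of the formula for $\tau$ applies.

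In case (a), $D_0\equiv 1$ or $5\pmod 8$. So $\tau=\eta-1$ for $d=0$. For $d=1$ we have $D_1=4D_0$ with $y=D_0\equiv1\pmod4$, so again $\tau=\eta-1$. For $d=2$ we have $y=4D_0\equiv4\pmod8$, giving $\tau=\eta$. For $d\geq3$ we have $y\equiv0\pmod8$, giving $\tau=\eta+1$. These are exactly the values listed in part (a) of the proof of \cref{ses-card-holds}, and they give the three displayed rank relations.

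In case (b), $D_0=c_0^2D_K$ with $D_K=4y_0$ and $y_0\equiv2\pmod4$; since $c_0$ is odd, $c_0^2y_0\equiv2\pmod4$ as well. So $\tau=\eta$ at $d=0$. For $d\geq1$ the number $y=2^{2d}c_0^2y_0$ is divisible by $8$, so $\tau=\eta+1$.

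In case (c), $y_0\equiv3\pmod4$, and again $c_0^2y_0\equiv3\pmod4$. So $\tau=\eta$ at $d=0$. At $d=1$, $y=4c_0^2y_0\equiv4\pmod8$, so $\tau=\eta$. For $d\geq2$, $y$ is divisible by $16$, so $\tau=\eta+1$.

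All of this repeats the bookkeeping already carried out in \cref{ses-card-holds}, so the argument will largely cite those displayed values of $\card{S_d[2]}$ and take $\log_2$. The only point needing care is checking that multiplying by the odd square $c_0^2$ leaves $y\bmod 4$ (or $\bmod 8$) unchanged, so that the congruence conditions stated for $D_K$ transfer to $D_d$. I do not expect any real obstacle.
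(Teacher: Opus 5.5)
Your proposal is correct and follows the paper's route: the paper's proof reads the ranks off as $\log_2$ of the values of $\card{S_d[2]}$ computed from \cref{two-torsion} in the proof of \cref{ses-card-holds}, using the fact that $D_d=2^{2d}D_0$ has a fixed number $\eta$ of odd prime divisors. Your explicit check that multiplying by the odd square $c_0^2$ preserves the relevant congruence class of $y$ is a detail the paper leaves implicit.
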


\bibliographystyle{plain}
\bibliography{refs}

\end{document}

%% file: header.tex
\usepackage[l2tabu,orthodox]{nag}

\usepackage[a4paper, hmargin=2.2cm, vmargin=2.0cm, marginparwidth=2cm, marginparsep=0.2cm]{geometry}

\usepackage[shortlabels]{enumitem}
\usepackage{numprint}
\usepackage[short]{datetime}

\usepackage{booktabs}
\usepackage{mathtools}
\usepackage{amsthm}
\usepackage{MnSymbol}
\usepackage[protrusion=true,verbose=true,final=true]{microtype}

\usepackage[hypertexnames=false]{hyperref}
\hypersetup{
  final=true,
  colorlinks=true,
  linkcolor=blue,
  urlcolor=blue,
  citecolor=magenta
}

\usepackage{tikz}
\usetikzlibrary{spy}
\usetikzlibrary{matrix,arrows,positioning}
\usetikzlibrary{decorations.markings}
\usetikzlibrary{decorations.fractals}
\usetikzlibrary{decorations.pathmorphing}
\usetikzlibrary{shapes.geometric}
\usetikzlibrary{calc}
\usetikzlibrary{decorations.pathreplacing}

\usetikzlibrary{graphs,arrows.meta}
\definecolor{fashionfuchsia}{rgb}{0.96, 0.0, 0.63}
\tikzset{
  vertex/.style={circle,draw=black,fill=fashionfuchsia,inner sep=2pt},
  edge/.style={thick},
}

\usepackage{tikz-cd}

\newcommand{\ZZ}{\mathbf{Z}}
\newcommand{\QQ}{\mathbf{Q}}
\newcommand{\FF}{\mathbf{F}}

\newcommand{\NN}{\mathbf{N}}

\newcommand{\LL}{\mathcal{L}}
\newcommand{\LLbar}{\widebar{\mathcal{L}}}
\newcommand{\fp}{\FF_p}
\newcommand{\fpk}{\FF_{p^k}}
\newcommand{\fpbar}{\overline{\FF}_p}
\newcommand{\Aut}{\operatorname{Aut}}
\newcommand{\Ext}{\operatorname{Ext}}
\newcommand{\Gal}{\operatorname{Gal}}

\newcommand{\End}{\operatorname{End}}

\newcommand{\im}{\operatorname{im}}
\newcommand{\id}{\operatorname{id}}

\newcommand{\rk}{\operatorname{rk}}

\newcommand{\Tr}{\operatorname{Tr}}

\newcommand{\longto}{\longrightarrow}

\newcommand{\Cl}{\operatorname{Cl}}
\newcommand{\ord}{\operatorname{ord}}

\newcommand{\can}{\operatorname{can}}
\newcommand{\coker}{\operatorname{coker}}
\newcommand{\Hom}{\operatorname{Hom}}

\newcommand{\oh}{\mathcal{O}}

\newcommand{\into}{\hookrightarrow}
\newcommand{\pp}{\mathfrak{p}}

\newcommand{\qq}{\mathfrak{q}}

\newcommand{\st}{\,\colon\;}

\newcommand{\Etilde}{\widetilde{E}}
\newcommand{\alphatilde}{\widetilde{\alpha}}

\newcommand{\gpk}{\mathcal{G}_\ell(\fpk)}

\renewcommand{\geq}{\geqslant}
\renewcommand{\ge}{\geqslant}
\renewcommand{\leq}{\leqslant}

\renewcommand{\to}{\longto}
\renewcommand{\mapsto}{\longmapsto}

\newcommand{\hash}{\textnormal{\texttt{\#}}}
\newcommand{\card}[1]{\hash #1}

\usepackage{thmtools}
\usepackage[noabbrev,capitalize,nameinlink]{cleveref}

\newtheorem{thm}{Theorem}[section]
\crefname{thm}{Theorem}{Theorems}

\crefname{conj}{Conjecture}{Conjectures}
\newtheorem{lem}[thm]{Lemma}
\crefname{lem}{Lemma}{Lemmas}
\newtheorem{cor}[thm]{Corollary}
\crefname{cor}{Corollary}{Corollaries}
\newtheorem{prop}[thm]{Proposition}
\crefname{prop}{Proposition}{Propositions}
\theoremstyle{definition}
\newtheorem{definition}[thm]{Definition}

\crefname{exm}{Example}{Examples}
\newtheorem{rem}[thm]{Remark}
\crefname{rem}{Remark}{Remarks}

\crefname{exe}{Exercise}{Exercises}

\DeclareFontFamily{U}{mathx}{\hyphenchar\font45}
\DeclareFontShape{U}{mathx}{m}{n}{<-> mathx10}{}
\DeclareSymbolFont{mathx}{U}{mathx}{m}{n}
\DeclareMathAccent{\widebar}{0}{mathx}{"73}

\DeclarePairedDelimiter{\abs}{\lvert}{\rvert}

\newcommand{\define}[1]{\textit{\color{magenta}{#1}}}
\newcommand{\explosive}{$k$-explosive}
\newcommand{\cohlen}{Cohen--Lenstra}

\newcounter{tabenum}
\renewcommand{\thetabenum}{(\alph{tabenum})}
\newcommand{\tabitem}[1]{%
  \makebox[0pt][l]{\raisebox{2.5ex}[0pt][0pt]{\refstepcounter{tabenum}\label{#1}}}%
  \thetabenum%
}
\makeatletter
\newcommand{\settabprefix}[1]{%
  \renewcommand{\p@tabenum}{#1}%
}
\makeatother

\usepackage{subfig}